\newcommand{\href}[2]{{#2}}
\newlength{\ei}\ei=0.0138888889em
\newlength{\eN}\settowidth{\eN}{--}
\newlength{\SyW}  %
\newlength{\msu}  \msu=\mathsurround 
 \newcommand{\Ts}{\textstyle}
\newcommand{\Ss}{\scriptstyle}  \newcommand{\SSs}{\scriptscriptstyle}
\newcommand{\ssr}{\rm\scriptscriptstyle}
\newcommand{\req}{\relax}
\newcommand{\rfia}[1]{\makebox[\parindent][l]{%
                     \makebox[0em][r]{\rm(}\sf#1\rm)}}
\newcounter{ABCcB}
\newcommand{\theABCcC}{\alph{ABCcB}}
\newcommand{\Ew}{\mathop{\rm {{}E{}}}\nolimits} 
\newcommand{\Var}{\mathop{\rm Var}\nolimits}    
\newcommand{\sign}{\mathop{\rm sign}\nolimits}    
\newcommand{\ve}{\varepsilon}
\newcommand{\Lw}{{\cal L}}
\newcommand{\Lo}{\mathop{\rm {{}o{}}}\nolimits}
\newcommand{\LO}{\mathop{\rm {{}O{}}}\nolimits}
\newcommand{\B}{\mathbb B}
\newcommand{\R}{\mathbb R}
\newcommand{\N}{\mathbb N}
\newcommand{\Jc}{\mathop{\bf\rm{{}I{}}}\nolimits}
\newcommand{\iid}{\mathrel{\stackrel{\ssr i.i.d.}{\sim}}}
\newcommand{\Tfrac}[2]{\textstyle\frac{#1}{#2}}
\newcommand{\Tsum}{\mathop{\Ts\sum}\nolimits}
\newtheorem{Thm}{Theorem}
\newtheorem{Prop}[Thm]{Proposition}
\newtheorem{Lem}[Thm]{Lemma}
\newtheorem{Rem}[Thm]{Remark}
\newtheorem{Cor}[Thm]{Corollary}
\newtheorem{Def}[Thm]{Definition}
\newtheorem{Bez}[Thm]{Notation}
\numberwithin{equation}{section}
\numberwithin{Thm}{section}
\newcounter{ABCc}
\renewcommand{\theABCc}{\alph{ABCc}}
\newenvironment{ABC}{\begin{list}{
  \rfia{\theABCc}}{\usecounter{ABCc} \topsep 0ex \partopsep 0ex \itemsep0ex
  \parsep=\parskip \leftmargin 0em \rightmargin 0em \itemindent=\parindent
  \listparindent=\parindent  \labelsep 0.2em \labelwidth 0.5em }}{\end{list}}
\newcommand{\pkg}[1]{\texttt{#1}}
\newcommand{\asy}{\mbox{as\hskip-4\ei .\hskip-16\ei.}\hskip12\ei}
\title{Higher Order Expansion for the MSE of M-estimators on shrinking neighborhoods}
\author{Peter Ruckdeschel}
\institute{P. Ruckdeschel \at
              Fraunhofer ITWM, Department of Financial Mathematics, \\
              Fraunhofer-Platz 1, 67663 Kaiserslautern, Germany\\
              and Dept.\ of Mathematics, University of Kaiserslautern,\\
              P.O.Box 3049, 67653 Kaiserslautern, Germany \\
              \email{peter.ruckdeschel@itwm.fraunhofer.de}\\           
}
\title{Higher order Expansion for the MSE of M-estimators on shrinking neighborhoods}
\author{}\institute{}
\date{Received: date / Accepted: date}
\begin{document}
\maketitle
\begin{abstract}
We consider estimation of a one-dim.\ location parameter
by means of M-estimators $S_n$ with monotone influence curve $\psi$.
For growing sample size $n$,  on suitably thinned out convex contamination balls
$\tilde{\cal Q}_n$ of shrinking radius $\textstyle{{r}/{\sqrt{n}}}$
about the ideal distribution, we obtain an expansion of the asymptotic
maximal mean squared error MSE of form
$$
\max_{Q_n\in \tilde{\cal Q}_n}
n \,{\rm MSE}(S_n,Q_n) = r^2 \sup \psi^2 +
{\rm E}_{\rm \SSs id}\psi^2+ {\Tfrac{r}{\sqrt{n}}}\,
A_1 + {\Tfrac{1}{n}}\,A_2+\Lo({\Tfrac{1}{n}}),
$$
where $A_1$, $A_2$ are constants depending on $\psi$ and $r$.
Hence $S_n$ not only
is uniformly (square) integrable in $n$ (in the ideal model) but also
on $\tilde{\cal Q}_n$, which is not self-evident.
For this result, the thinning of the neighborhoods, by a breakdown-driven, sample-wise 
restriction, 
is crucial, but exponentially negligible.
Moreover, our results essentially characterize contaminations
generating maximal ${\rm MSE}$ up to $\Lo(n^{-1})$. 
Our results are confirmed empirically by simulations as well as numerical evaluations
of the risk.
\keywords{higher order asymptotics \and location M-estimator \and uniform integrability
\and Edgeworth expansion \and gross error neighborhood \and shrinking neighborhood
\and breakdown point
}%
\subclass{MSC 62F35, 62F12}
\end{abstract}

\section{Motivation/introduction}\req
In the setup of shrinking neighborhoods about a general, parametric
 ideal central model, \citet{Ri:94} determines the 
asymptotically linear estimator (ALE) minimaxing \asy ${\rm MSE}$ 
on these neighborhoods.
We address the question to which degree this asymptotic optimality
carries over to finite sample size and try to identify
and quantify which aspects of both estimator and neighborhood
are responsible for the quality of the approximation.

\subsection{Setup: one-dimensional location} \label{1dimlocset}
As a starting point for assessing such questions we consider the most
basic parametric model of statistics, the one-dimensional location model
$\{P_{\theta}(dx)=F(dx-\theta),\;\theta\in\R\}$
for some ideal distribution $F$ with finite Fisher-Information of location ${\cal I}(F)$
in the sense of \citet[4.Def.4.1, Thm.4.2]{Hu:81},  i.e.\
${\cal I}(F):=\sup_{\varphi\in{\cal C}_c^1} (\int \dot \varphi \,dF)^2/(\int \varphi^2 \,dF)$, entailing that
$\Lambda_f=-\dot f/f \in L_2(F)$, $ {\cal I}(F)=\Ew[\Lambda_f^2]$.
Paralleling \citet{Hu:64}, we also assume that $\Lambda_f$ is increasing.
By translation equivariance, we may restrict ourselves to $\theta_0=0$ which
is suppressed in the notation.

The set of {\em influence curves\/} (IC's) $\Psi$ in this model is defined as in
\citet{Ri:94}
\begin{equation}
  \Psi:=\{\psi \in L_2(F) \,|\, \Ew[\psi]=0,\quad \Ew[\psi \Lambda_f]=1\},
\end{equation}
where both expectations are evaluated under $F$.
\paragraph{Shrinking neighborhoods} \label{110ssn}
Robust Statistics enlarges the ideal model assumptions by suitable neighborhoods
about them. The shrinking neighborhood approach---compare e.g.\ \citet{Ri:94},
\citet{K:R:R:10}, balances bias and variance, which would be of different scaling
in $n$ otherwise, see also \citet{Ruck:03e}. For this paper
we consider contamination neighborhoods, i.e.\ the set ${\cal Q}_n(r)$ of distributions
\begin{equation}\label{contadef}
{\cal L}^{\SSs \rm real}_{\theta}(X_1,\ldots,X_n)=Q_{n}=
{\Ts\bigotimes\limits_{i=1}^n}[(1-\Tfrac{r_n}{\sqrt{n}}) F+\Tfrac{r_n}{\sqrt{n}}\,P^{\SSs \rm di}_{n,i}]
\end{equation}
with $r_n=\min(r,\sqrt{n})$, $r>0$ the contamination radius and $P^{\SSs \rm di}_{n,i} \in {\cal M}_1(\B)$ arbitrary,
uncontrollable contaminating distributions.
As usual, we interpret $Q_n$ as the distribution
of the vector $(X_i)_{i\leq n}$ with components
\begin{equation}\label{Uidef}
  X_i:=(1-U_i)X_i^{\SSs \rm id}+U_i X_i^{\SSs \rm di},\qquad i=1,\ldots,n
\end{equation}
for $X_i^{\SSs \rm id}$, $U_i$, $X_i^{\SSs \rm di}$ stochastically independent, $X_i^{\SSs \rm id}\iid F$, $U_i\iid{\rm Bin}(1,r/\sqrt{n})$,
and $(X_i^{\SSs \rm di})\sim P^{\SSs \rm di}_{n}$ for some arbitrary $P^{\SSs \rm di}_{n} \in{\cal M}_1(\B^n)$.
\paragraph{First order optimality} \label{fooptgen}
For a sequence of estimators $S_n$, consider as risk the asymptotically (modified) maximal MSE on ${\cal Q}_n$
\begin{equation} \label{modifmse}
\tilde R(S_n,r):= \lim_{t\to\infty}\lim_{n\to\infty}\sup_{Q_n\in{\cal Q}_n(r)} \int \min\{t,\, n\,|S_n-\theta_0|^2\} \,dQ_n
\end{equation}
Following \citet[Ch.~5]{Ri:94} a (suitably constructed) ALE $S_n$ with IC $\psi$ has risk
\begin{equation}\label{MSEpsi}
\tilde R(S_n,r)=r^2\sup |\psi|^2 + \Ew_{\rm \SSs id} |\psi|^2
\end{equation}
By 
Theorem~5.5.7 (ibid.), together with its preceding remarks, for given $r\ge 0$,
a (suitably constructed) ALE with IC $\hat \eta$ minimizes $\tilde R(\,\cdot\,,r)$ among all ALE's
iff $\hat \eta=\eta_{c_0}$
 for Lagrange multipliers $z$ and $A$ such that $\eta_{c_0}$ is an IC for
\begin{eqnarray}\label{HK1}
&&  \eta_{c_0}=A(\Lambda_f-z)\min\{1,
c_0/{|\Lambda_{f}-z|}\},\qquad
\Ew[(|\Lambda_f-z|-c_0)_+]=r^2c_0 \label{HK3}
\end{eqnarray}
\paragraph{Open issues in this setup}~\label{short}
Being bound to first order asymptotics, so far these results do not come along with an
indication for the speed of the convergence; it is not clear to what degree
radius $r$, sample size $n$ and clipping height $b$ affect this approximation.
The theorem only characterizes the optimal expansion in terms of ICs.
Finally, modification~\eqref{modifmse} of the MSE, which is common in asymptotic  statistics,
cf.\  \citet{LC:86}, \citet{Ri:94}, \citet{BKRW:98}, \citet{VdW:98},  and which forces the integrals to
converge under weak convergence, appears somewhat ad hoc. One would perhaps prefer a modification
that is statistically motivated.
\subsection{M-estimators for location}
There are several constructions for an ALE to achieve a given IC $\psi$---one-step constructions,
M-estimators, L-estimators and many more. In this paper we confine ourselves 
to M-estimators. We require $\psi$ to be monotone and bounded and write 
$\psi_t(\,\cdot\,)$ for $\psi(\,\cdot\,-t)$.
For technical reasons we assume that the  set $D_t$ of
discontinuities of the c.d.f.\ of $\psi_t(X^{\rm \SSs id})$
has to carry less mass than $1$ uniformly:
\begin{equation} \label{pdt}
p_{D}:=\sup\nolimits_t\, P^{\rm \SSs id}(D_t)<1
\end{equation}
Following the notation in \citet[pp.~46]{Hu:81},  let
\begin{equation}\label{mest1}
  S_n^{\ast}:=\sup\Big\{\,t\,|\,\sum_{i\leq n} \psi_t(x_i)>0\Big\},\qquad
  S_n^{\ast\ast}:=\inf\Big\{\,t\,|\,\sum_{i\leq n} \psi_t(x_i)<0\Big\}
\end{equation}
and $S_n$ be any estimator satisfying $S_n^{\ast}\leq S_n \leq S_n^{\ast\ast}$.
By monotonicity of $\psi$, we get
\begin{equation}\label{mest2}
  \Pr\{S_n^{\ast}<t\}=\Pr\Big\{\sum_{i\leq n} \psi_t(x_i)\leq 0\Big\},\qquad
  \Pr\{S_n^{\ast\ast}<t\}=\Pr\Big\{\sum_{i\leq n} \psi_t(x_i)<0\Big\}
\end{equation}
in the continuity points $t$ of the LHS.
The next lemma, an immediate consequence of \citet[Theorem~2.3]{Hal:92}, shows that we may ignore the event
$S_n^{\ast}\not=S_n^{\ast\ast}$ if we are
interested in statements valid up to
$\Lo(1/n)$.
\begin{Lem}\label{Kohllem}
Under \eqref{pdt}, $\Pr(S_n^{\ast}\not=S_n^{\ast\ast})=\LO(\exp(-\gamma n))$ for some $\gamma>0$.
\end{Lem}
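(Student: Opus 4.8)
The plan is to give a purely geometric description of the exceptional event $\{S_n^{\ast}\neq S_n^{\ast\ast}\}$ in terms of the data, to bound its probability for a fixed location by an elementary estimate into which \eqref{pdt} enters, and finally to upgrade this to a bound uniform in the random location by exploiting the piecewise‑constant structure of an associated counting function; the exponential estimate one obtains is exactly the content of \citet[Theorem~2.3]{Hal:92}.

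First I would use monotonicity. Writing $g_n(t):=\sum_{i\le n}\psi_t(x_i)$, the assumption that $\psi$ is increasing makes each $t\mapsto\psi_t(x_i)=\psi(x_i-t)$ non‑increasing, so $g_n$ is non‑increasing. From \eqref{mest1} one then reads off that $S_n^{\ast}\neq S_n^{\ast\ast}$ occurs precisely when $g_n$ vanishes identically on a non‑degenerate interval $[\alpha,\beta]$, $\alpha<\beta$; a genuine crossing of level $0$, whether continuous or by a jump, forces $S_n^{\ast}=S_n^{\ast\ast}$. Since a finite sum of non‑increasing functions can be constant on $[\alpha,\beta]$ only if every summand is, the event implies that for some $t_0\in[\alpha,\beta]$ each $x_i-t_0$ lies in the constancy set $\mathcal F:=\{y:\psi\ \text{is constant in a neighbourhood of}\ y\}$ of $\psi$; equivalently, all $n$ observations fall into the shifted set $t_0+\mathcal F$.

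Second I would turn this into a probability estimate and bring in \eqref{pdt}. For fixed $t$, the value $\psi_t(X^{\mathrm{id}})$ is an atom of its own law at every constancy level of $\psi$ carrying positive $F$–mass, so summing these atoms gives $F(t+\mathcal F)\le P^{\mathrm{id}}(D_t)\le p_D$ for every $t$. Hence, for a \emph{deterministic} $t$,
\[
\Pr\bigl(x_i\in t+\mathcal F,\ i=1,\dots,n\bigr)=F(t+\mathcal F)^n\le p_D^{\,n}.
\]
The only remaining difficulty — and the crux of the argument — is that the level $t_0$ above is random, being essentially $S_n$, so this fixed‑$t$ bound cannot be applied directly.

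Third I would remove the randomness in $t$ by a covering argument. The counting function $N(t):=\#\{i\le n:\ x_i-t\in\mathcal F\}$ is piecewise constant and jumps only when some $x_i-t$ meets one of the finitely many endpoints $e_1,\dots,e_{2K}$ of the constancy intervals of the (clipped, monotone) $\psi$; thus $N(\cdot)$ has at most $2Kn$ breakpoints, each of the form $t=x_{i}-e_{k}$. On $\{S_n^{\ast}\neq S_n^{\ast\ast}\}$ we have $N(t_0)=n$, and $\{t:N(t)=n\}$, being a union of constancy intervals of $N$, is non‑empty only if $N$ reaches $n$ at one of its breakpoints. Fixing the observation $x_{i}$ sitting on the endpoint pins $t=x_i-e_k$; conditionally the remaining $n-1$ observations are i.i.d.\ $F$ and must still lie in $t+\mathcal F$, an event of conditional probability $\le p_D^{\,n-1}$ by the display. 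A union bound over the $\le 2Kn$ breakpoints yields
\[
\Pr\bigl(S_n^{\ast}\neq S_n^{\ast\ast}\bigr)\le 2Kn\,p_D^{\,n-1}=\LO\bigl(\exp(-\gamma n)\bigr),\qquad \gamma<-\log p_D ,
\]
since $p_D<1$. I expect the main obstacle to be precisely this passage from a fixed level $t$ to the random location $S_n$, i.e.\ controlling $\sup_t N(t)$ without spoiling the exponential rate; the finite breakpoint structure together with the uniform bound \eqref{pdt} is what makes it work. Finally, under the contaminated laws $Q_n$ at most the $\sum_{i}U_i=\LO(\sqrt n)$ replaced coordinates can be placed on $\mathcal F$ for free, so the ideal coordinates still contribute a factor $p_D^{\,n-\LO(\sqrt n)}$ and the same exponential bound persists uniformly over ${\cal Q}_n(r)$.
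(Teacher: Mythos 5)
Your first step (the characterization of $\{S_n^{\ast}\neq S_n^{\ast\ast}\}$ as ``$g_n\equiv 0$ on a nondegenerate interval'', hence every summand constant there) is correct, but the reduction you then base the whole estimate on is too lossy, and it is precisely the paper's central examples that break it. From the event you retain only the consequence that all $n$ observations lie in a common translate $t_0+\mathcal F$ of the constancy set, discarding the requirement that the constant value of $g_n$ on that stretch be \emph{zero}. For the Hampel-type scores $\eta_c$ of \eqref{HK1} --- the ICs to which Lemma~\ref{Kohllem} is applied in the proof of Theorem~\ref{main} and throughout the simulations --- $\psi$ is constant on $(-\infty,-c]$ and on $[c,\infty)$, so your necessary condition holds with probability \emph{one}: take $t_0<\min_i x_i-c$, then every $x_i-t_0$ lies in the flat $(c,\infty)$. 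No exponential (indeed no nontrivial) bound on that event can exist, so some step of your estimate must fail; it is the inequality $P^{\rm id}(D_t)\le p_D$ ``for every $t$'': for $\eta_c$ and absolutely continuous $F$ one has $P^{\rm id}(D_t)\ge F(t-c)\to 1$ as $t\to\infty$, so no bound $\sup_{t\in\R}P^{\rm id}(D_t)<1$ is available (and if you insist on reading \eqref{pdt} as a supremum over all of $\R$, then \eqref{pdt} itself excludes $\eta_c$ and the median, making the lemma vacuous for the paper's purposes). The mechanism that actually produces exponential decay is the zero-sum constraint you dropped: $g_n\equiv 0$ on an interval forces the flat values to \emph{balance}, which for $\eta_c$ means $n$ is even and exactly $n/2$ observations lie below $t_0-c$ and $n/2$ above $t_0+c$, i.e.\ the central spacing satisfies $X_{(n/2+1)}-X_{(n/2)}\ge 2c$; its probability is of order at most $\binom{n}{n/2}\big[\sup_t F(t-c)(1-F(t+c))\big]^{n/2}\le (4q)^{n/2}$ with $q=\sup_t F(t-c)\big(1-F(t+c)\big)<1/4$ (e.g.\ for Gaussian $F$). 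The Remark following the lemma --- probability zero for odd $n$ when $\bigcup_t D_t=\{\pm c\}$ --- is exactly this counting structure, which your argument cannot see.

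There is a second, independent gap: your covering argument presumes that $\mathcal F$ has finitely many components (``the finitely many endpoints $e_1,\dots,e_{2K}$''). Assumption (bmi) allows countably many constancy intervals (Cantor-staircase-like $\psi$, a pathology the paper itself worries about in Section~\ref{relsec}), and then the union bound over breakpoints diverges; so the passage from fixed $t$ to the random $t_0$, which you rightly identify as the crux, is not closed even in the regime where your reduction is not vacuous. (A further small repair --- the pinned configuration only forces the remaining observations into $t^{\ast}+\overline{\mathcal F}$, not $t^{\ast}+\mathcal F$ --- is harmless, since finite Fisher information makes $F$ absolutely continuous, but it should be stated.) Note finally that the paper gives no argument beyond the citation of \citet[Theorem~2.3]{Hal:92}, so a self-contained proof would be genuinely valuable; but it must be organized around the balanced-occupancy event above (equivalently, around spacings of order statistics), with \eqref{pdt} entering through the occupation probabilities of the individual flats, not around joint membership of all observations in a single translate of $\mathcal F$.
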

\begin{Rem}\rm\small
If $\bigcup_t D_t=\{\pm c\}$ for some $c>0$, $\Pr(S_n^{\ast}\not=S_n^{\ast\ast})=0$ for $n$ odd.
\end{Rem}%
\subsection{Organization of this paper and description of the results}
This paper provides answers to some of the open questions mentioned in subsection~\ref{short}; these
answers were initiated by an attempt to check the validity of Rieder's asymptotic approach at finite sample
sizes by simulations in 2003. 
At closer inspection of these simulations,  
M.~Kohl found out that larger inaccuracies of (first order) asymptotics  only occurred in extraneous sample situations
where more than half the sample size stemmed from a contamination, which made him conjecture that excluding such samples,
 asymptotics might then prove useful even for very small samples.
With regard to our shrinking setup, such an exclusion on the one hand
is asymptotically negligible, hence does not affect the results of
subsection~\ref{110ssn}, but on the other hand under this restriction indeed 
the unmodified MSE converges along with weak convergence.
We discuss this modification in section~\ref{modifsec}. 
In section~\ref{resultsec}, we present
the central theoretical result, Theorem~\ref{main}. This result is of the following form
\begin{equation}\boldmath\label{typresult}
  \sup_{Q_n\in\tilde{\cal Q}_n(r;\ve_0)}n\, {\rm MSE}(S_n,Q_n)=r^2 \sup |\psi|^2+ \Ew \psi^2 + \Tfrac{r}{\sqrt{n}\,}\,A_1+
  \Tfrac{1}{n}\,A_2+\Lo(\Tfrac{1}{n})
\end{equation}
Here $S_n$ is an M-estimator to IC $\psi$, and $A_1$, $A_2$ are polynomials in the contamination radius $r$,
in $b=\sup |\psi|$,
and in the moment functions
$t\mapsto \Ew \psi_t^l$, $l=1,\ldots,4$ and their derivatives evaluated in $t=0$.
We recognize at once that the speed of the convergence to the first order asymptotic  
value is one order faster in the ideal model.\vspace{-1ex}
\begin{Bez}
  \rm\small
For indices we start counting with $0$, so that terms of first-order asymptotics
  have an index $0$, second-order ones a $1$ and so on. Also we abbreviate first-order, second-order and third-order
  by f-o, s-o, t-o respectively, and we write f-o-o, s-o-o, and t-o-o for first, second, and third-order asymptotically  optimal respectively.\vspace{-1ex}
\end{Bez}
As to the correctness of our main result, we give a number of cross checks and comments on this result
in section~\ref{relsec}. The relevance of these results for (small) finite sample sizes is shown
by a simulation study which is presented in section~\ref{simsec} as to its design and results.
By means of an adopted convolution algorithm taken from \citet{R:K:10:FFT},  we also compute numerically
exact values of the MSE.
Proofs are delegated to the appendix  section~\ref{proofsec}.
These contain rather tedious Taylor expansions where we need the help of a symbolic Algebra program like {\tt MAPLE}.
To ease readability, we therefore start the proof of the main theorem with an outline of the essential steps.
Some auxiliary results needed in the proofs are provided in an appendix in section~\ref{appsec}.

On a web-page to this page, additional tables and figures,
the {\tt MAPLE} script to generate the expansions, and the {\tt R}-script to calculate numerically exact MSE
are available for download.
\section{Modification of the shrinking neighborhood setup}\label{modifsec}\req
The key property in the shrinking-neighborhood setup is the LAN-property\footnote{for local asymptotic  normality}
in the sense of H\'ajek and LeCam. LAN holds for $L_2$-differentiable models, c.f.~\citet[Thm.\ 2.3.5]{Ri:94}.
and together with LeCam's third Lemma---c.f.\ 
Cor.~2.2.6 ibid.
---implies uniform weak
convergence of any (suitably constructed) ALE to a bounded IC
on a representative subclass of the system of neighboring
distributions ${\cal Q}_n$---those distributions induced by
simple perturbations $Q_n(\zeta,t)$, see     
p.~126 (ibid.)
.\\
Without additional assumptions, this weak convergence however does not carry over
to convergence of the risk for an unbounded loss function in general, i.e.\
uniform integrability fails on any proper neighborhoods shrinking arbitrarily fast;
which can be seen along the lines of \citet[Prop.~2.1]{Ruck:03b}.
%
%
%
%
\paragraph{Modification of the shrinking neighborhood setup}
%
We instead propose the following modification of the neighborhoods for finite $n$:
Only realizations of $U_1,\ldots,U_{n}$ are permitted, where $\sum U_i < n/2$.
More precisely, accounting for non-symmetric $\psi$,
we introduce
\begin{equation}\label{checkbdef}
\check b:= \inf\psi,\qquad\hat b =\sup\psi,\qquad  \bar b:=\Tfrac{1}{2}(\hat b -\check b),\qquad
\delta_0:=\Tfrac{|\check b+\hat b|}{\min((-\check b),\hat b)}\ge 0
\end{equation}
and recall that in our situation, both the functional (\citet[(2.39),(2.40)]{Hu:81}) and the finite sample ($\ve$-contamination) breakdown
point (\citet[section 2.2]{Do:Hu:83}) of $T$ respectively $S_n$ are
\begin{equation}
\ve_0=1/({2+\delta_0})={\sup|\psi|}/({\hat b - \check b})
\end{equation}
With these expressions, our modification amounts to considering the neighborhood system $\tilde {\cal Q}_n(r;\ve_0)$
of conditional distributions
\begin{equation}\label{modifdefi}
Q_{n}={\cal L}\Big\{[(1-U_i)X_i^{\SSs \rm id}+U_i X_i^{\SSs \rm di}]_i\,\Big|\,\sum U_i \leq\,\ulcorner \ve_0 n\, \urcorner-1\,\Big\}
\end{equation}
This restriction hence combines a restriction to the marginals $\Lw(X_i^{\SSs \rm real})$ which are ``close''
to $\Lw(X_i^{\SSs \rm id})$ for each $i$ as well as a sample-wise restriction.\\
Correspondingly, we will consider the asymptotics of the \textit{unmodified} MSE risk
\begin{equation} \label{origmse}
R_n(S_n,r;\ve_0):= \sup_{Q_n\in \tilde{\cal Q}_n(r;\ve_0)} n\, \int |S_n-\theta_0|^2 \,dQ_n
\end{equation}

\paragraph{Asymptotic negligibility of this modification}\label{expneg}
The effect of this modification is negligible asymptotically:
By the Hoeffding bound \eqref{hoe2},
\begin{eqnarray}
P(\Tsum U_i\ge n\ve_0)
&\leq&\exp\Big(-2n (\ve_0-{r}/{\sqrt{n\,}}\,)^2 \Big) \label{expneg2}
\end{eqnarray}
which decays exponentially fast. Thus all results on convergence in law of the shrinking neighborhood
setup are not affected when passing
from ${\cal Q}_n(r)$ to $\tilde {\cal Q}_n(r;\ve_0)$.
%
%
\begin{Rem}\rm\small
\begin{ABC}
  \item Thinning out the neighborhoods is equally relevant for the interchange of integration and maximization
in the context of neighborhoods to a fixed radius $\ve$:
  Replacing $r/\sqrt{n}$ by  $\ve$, asymptotic  negligibility \eqref{expneg2} continues to hold, as long as $\ve<\ve_0$,
  while the failure of uniform integrability persists.
\item M-estimators have the well-known feature that in general 
the procedure with optimal efficiency [minimax MSE in our context]
does not attain maximal breakdown point [works with minimally thinned out
neighborhoods]; but just as already mentioned in \citet{RouP:84} and similarly as worked out in \citet{Yo:87},  
both goals may be achieved simultaneously combining a starting M-estimator of maximal breakdown point with a correction by a one-/$k$-step
construction with the f-o-o (or s-o-o, t-o-o) IC. 
\end{ABC}
\end{Rem}
\section{Main Theorem}\label{resultsec}\req
%
\paragraph{Notation}
To $\psi:\R\to\R$ monotone let $\psi_t(x):=\psi(x-t)$ and $\psi_t^0:=\psi_t-\Ew \psi_t$ define the following
functions
\begin{equation}
 L(t)=\Ew \psi_t,\;\; V(t)^2=\Ew (\psi_t^0)^2, \;\;
 \rho(t)=\Ew(\psi_t^0)^3 V(t)^{-3},\;\; \kappa(t)=\Ew(\psi_t^0)^4V(t)^{-4}-3\label{LVRK2}
\end{equation}
Let $\check y_n$ and  $\hat y_n$ 
sequences in $\R$ such that for some $\gamma>1$
\begin{equation}\label{subyndef}
  \psi(\check y_n)=\inf \psi +\Lo(\Tfrac{1}{n^\gamma}),\qquad   \psi(\hat y_n)=\sup \psi +\Lo(\Tfrac{1}{n^\gamma}) 
\end{equation}
For $H\in{\cal M}_1(\B^n)$ and an ordered set of indices $I=(1\leq i_1<\ldots<i_k\leq n)$ denote $H_{I}$ the marginal of
$H$ with respect to $I$.
\begin{Def}\label{thinout}
Consider sequences $c_n$, $d_n$, and   $\kappa_n$ in $\R$, in $(0,\infty)$, and in $\{1,\ldots,n\}$, respectively.
We say that $(H^{(n)})\subset{\cal M}_1(\B^n)$ is {\em $\kappa_n$--concentrated left [right] of $c_n$ up to $\Lo(d_n)$}, if
for each sequence of ordered sets $I_n$ of cardinality $i_n\leq \kappa_n$
\begin{equation}
1-H^{(n)}_{I_n}\big((-\infty;c_n]^{i_n}\big)=\Lo(d_n)
\qquad
\Big[\,
1-H^{(n)}_{I_n}\big((c_n,\infty)^{i_n}\big)=\Lo(d_n)\,
\Big]
\end{equation}
\end{Def}
\paragraph{General assumptions in this paper}\label{Assum}
\begin{itemize}
  \item[(bmi)]  $\sup\|\psi\| =b <\infty$, $\psi$ monotone, $\psi \in \Psi$
  \item[(D)]   For some $\delta\in(0,1]$, $L$, $V$, $\rho$,   and  $\kappa$ from \eqref{LVRK2} allow the expansions
\begin{align}
&  \hspace{-2em}L(t)=l_1t+\Tfrac{1}{2} l_2\,t^2+\Tfrac{1}{6}l_3\,t^3+\LO(t^{3+\delta}) ,
   &&V(t)=v_0(1+\tilde v_1\,t+ \Tfrac{1}{2}\tilde v_2\,t^2)+\LO(t^{2+\delta})\label{VD}\hspace{1em}\mbox{ }\\
&\hspace{-2em}  \rho(t)=\rho_0+\rho_1\, t+\LO(t^{1+\delta}),
  &&\kappa(t)=\kappa_0+\LO(t^{\delta})\hspace{1em}\mbox{ }\label{KD}
\end{align}
  \item[(Vb)] $V(t)=\LO( |t|^{-(1+\delta)})$ for $|t|\to \infty$ and some $\delta\in(0,1]$
  \item[(C)] Let $f_t$ be
  the characteristic function of $\psi_t(X^{\rm\SSs id})$; then
  \begin{equation}\label{(C)gl}
\lim_{t_0\to 0}\limsup_{s\to\infty} \sup_{|t|\leq t_0}|f_t(s)|<1
  \end{equation}
  \end{itemize}
Condition (C) is a local uniform Cram\'er condition; it is implied by
\begin{Lem}\label{clemn}
Assume ${\cal L}(\psi(X^{\rm\SSs id}))$ has a nontrivial absolute continuous part and that $\psi$ is continuous.
 Then (C) is fulfilled.
\end{Lem}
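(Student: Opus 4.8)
The plan is to push all the $t$-dependence out of the nonlinear map $\psi$ and into a translation of the density of $F$, where it can be controlled uniformly in the frequency $s$. The standing finite-Fisher-information assumption already forces $F$ to be absolutely continuous with a density $f\in L_1(\R)$, so that, substituting $u=x-t$,
\[
 f_t(s)=\Ew\exp\{\,is\,\psi_t(X^{\rm\SSs id})\,\}=\int e^{\,is\,\psi(x-t)}f(x)\,dx=\int e^{\,is\,\psi(u)}f(u+t)\,du .
\]
Since the integrand has modulus one, this furnishes the decisive bound, \emph{uniform in} $s\in\R$,
\[
 \sup_{s\in\R}\,\bigl|f_t(s)-f_0(s)\bigr|\le\int\bigl|f(u+t)-f(u)\bigr|\,du=\|f(\,\cdot\,+t)-f\|_1 .
\]

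First I would dispatch the base case $t=0$ by the classical argument behind the ordinary Cram\'er condition. Writing ${\cal L}(\psi(X^{\rm\SSs id}))=\alpha\,\mu_{\rm ac}+(1-\alpha)\mu_{\rm rest}$ with $\alpha>0$ its absolutely continuous mass --- nontrivial precisely by hypothesis --- the Riemann--Lebesgue lemma applied to the $L_1$-density of $\mu_{\rm ac}$ gives $\hat\mu_{\rm ac}(s)\to0$, whence
\[
 \limsup_{s\to\infty}|f_0(s)|\le 1-\alpha<1 .
\]
Combining this with the uniform bound above, for every fixed $t_0>0$,
\[
 \limsup_{s\to\infty}\,\sup_{|t|\le t_0}|f_t(s)|\;\le\;(1-\alpha)+\sup_{|t|\le t_0}\|f(\,\cdot\,+t)-f\|_1 .
\]
It then remains to let $t_0\downarrow0$: continuity at the origin of the map $t\mapsto f(\,\cdot\,+t)\in L_1(\R)$ makes the last term tend to $0$, so the left-hand side --- which is exactly the quantity appearing in \eqref{(C)gl} --- is bounded by $1-\alpha<1$, proving (C).

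The one point that looks like a genuine obstacle a priori is the uniformity in $t$: the frequencies at which $|f_t(s)|$ comes close to $1$ might in principle migrate with $t$ and accumulate as $t\to0$, so that no single invocation of Riemann--Lebesgue could suffice. The substitution in the first display is what dissolves this difficulty, as it trades the intractable $t$-dependence sitting inside $\psi$ for the harmless translation $f(\,\cdot\,+t)$, whose $L_1$-oscillation is controlled independently of $s$. I would finally remark that continuity of $\psi$ is not actually used in this argument: only measurability and boundedness of $\psi$, absolute continuity of $F$, and the assumed presence of absolutely continuous mass in ${\cal L}(\psi(X^{\rm\SSs id}))$ enter. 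Continuity of $\psi$ serves rather as a convenient --- though not by itself sufficient --- device for verifying that last hypothesis in concrete models.
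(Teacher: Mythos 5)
Your proof is correct, and it takes a genuinely different route from the paper's. The paper works on the \emph{range} side: starting from the Lebesgue decomposition of ${\cal L}(\psi(X^{\rm\SSs id}))$, it picks an interval $(c_1,c_2)$ carrying absolutely continuous mass, uses continuity and strict isotonicity of $\psi$ on the preimage interval (with $d_i=\psi^{-1}(c_i)$) to write $P(\psi_t(X^{\rm\SSs id})\in(c_1,c_2))=\int_{d_1+t}^{d_2+t}dF=G_0(c_2)-G_0(c_1)+\Lo(t^0)$, concludes that the absolutely continuous part of $G_t={\cal L}(\psi_t(X^{\rm\SSs id}))$ stays uniformly bounded away from zero for small $t$, and then appeals to the Riemann--Lebesgue lemma. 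You work on the \emph{domain} side: the substitution $u=x-t$ pushes the whole $t$-dependence into the translate $f(\cdot+t)$ of the density of $F$ (legitimately available, since finite Fisher information is a standing assumption of the model and entails $F\ll\lambda$), which yields the $s$-uniform bound $\sup_s|f_t(s)-f_0(s)|\le\|f(\cdot+t)-f\|_1$; Riemann--Lebesgue then enters only once, at $t=0$, and the uniformity in $t$ is discharged by $L_1$-continuity of translation. Your version buys two things. First, it makes the uniformity step fully explicit, which is precisely the tersest point of the paper's proof: a lower bound on the a.c.\ mass of $G_t$, uniform in $t$, does not by itself give a $t$-uniform rate in Riemann--Lebesgue; one needs some $L_1$-type continuity of the family $(G_t)_t$, and your translation bound supplies exactly that (it even gives $\|G_t-G_0\|_{\rm TV}\le\|f(\cdot+t)-f\|_1$, from which the paper's uniform mass bound follows as well). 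Second, as you observe, continuity of $\psi$ becomes dispensable---only measurability, boundedness and $F\ll\lambda$ are used---so your argument proves a slightly stronger statement. What the paper's localization buys is that it exhibits concretely where the hypothesis of a nontrivial a.c.\ part enters (through an interval of the range on which $\psi$ is strictly increasing) and it only requires control of $F$ near the corresponding preimage interval rather than the global quantity $\|f(\cdot+t)-f\|_1$; under the standing assumptions, however, both sets of ingredients are available, so this is a matter of economy rather than generality.
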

%
%
\begin{Rem}\rm\small \label{varianteb}
\begin{ABC}
  \item By condition~(bmi) ---as $\psi \in \Psi$---, $l_1=-1$.
  \item   Condition (C) is not fulfilled for the median, as its influence curve just takes the values $-b,b$ $F$-a.e.
  A direct proof for an analogue to Theorem~\ref{main} is possible, however, and given in \citet{Ruck:03b}.
  \item For an expansion of the MSE up to  $\Lo(n^{-1/2})$, the $\kappa$ part of assumption~\eqref{KD} can be dropped, and we may use
  assumptions
  \begin{ABC}
  \item[(D')] For some $\delta\in(0,1]$, $L$, $V$, and $\rho$ allow the expansions
\begin{equation}
   L(t)=l_1t+l_2/2\,t^2+\LO(t^{2+\delta}), \qquad
   V(t)=v_0(1+\tilde v_1\,t)+ \LO(t^{1+\delta}), \qquad
  \rho(t)=\rho_0+\LO(t^{\delta}) \label{rho0B}
\end{equation}
  \item[(C')]  There exist $t_0>0$, $s_0>0$ such that for all $s_1>s_0$
  \begin{equation}\label{(C)gl2}
  \hat f_{s_0,t_0}(s_1):=\sup_{s_0\leq s\leq s_1} \sup_{|t|\leq t_0}|f_t(s)|<1
  \end{equation}
  \end{ABC}
\footnotesize{
Note that (C) implies (C'), but contrary to (C), in (C') the case $\sup_{s_1}\hat f_{s_0,t_0}(s_1)=1$ for all $s_0>0$ and all $t_0>0$ is allowed.}
\end{ABC}
\end{Rem}
\paragraph{Illustration}\label{illust}
%
We specialize the assumptions for $F={\cal N}(0,1)$, i.e. $\Lambda_f(x)=x$,
and $\psi(x)=\hat\eta_c(x)=A_c x\min\{1,{c}/{|x|}\}$ from (\ref{HK1}) with $A_c$ such that $\hat\eta_c\in\Psi$:
\begin{Prop}\label{normalspclem}
  For $F={\cal N}(0,1)$ and  for $\psi=\eta_c$ an influence curve to $c\in(0,\infty)$ of Hampel-form
    $\eta_c=A_c(x\min\{1,{c}/{|x|}\}$
with $A_c=(2\Phi(c)-1)^{-1}$,
   assumptions {(bmi)} to {(C)} are in force; in particular the bounds in {(Lb)} and {(Vb)}
   are even exponential.\\ 
    With $\Phi(x)$ the c.d.f.\ of ${\cal N}(0,1)$ and $\varphi(x)$ its density,
    we obtain
    $l_2=0$, $\tilde v_1=0$, $\rho_0=0$.
For $c\in(0,\infty)$, we get

\begin{small}
\begin{eqnarray*}
   l_3&=&  \frac{2c\varphi(c)}{(2\Phi(c)-1)},\quad
     v_0^2=2 b^2(1-\Phi(c))+A_c(1-2b\varphi(c)),\quad
\tilde     v_2=\frac{6\Phi(c)-4\Phi(c)^2-2-2c\varphi(c)}
{2 c^2(1-\Phi(c))+2\Phi(c)-1-2c\varphi(c)}\\
\rho_1&=&\frac{3 A_c^{3}\left(1-2\Phi(c)+2c\varphi(c)\right)}{v_0^3}+3v_0^{-1},\quad
\kappa_0=\frac{2c^4\left(1-\Phi(c)\right)-2c(c^2+3)\varphi(c)+3(2\Phi(c)-1)}{[2c^2\left(1-\Phi(c)\right)+2\Phi(c)-1-2c\varphi(c)]^2}-3
\end{eqnarray*}
\end{small}

For $c\downarrow 0$,
$l_3=1$, $v_0^2=\frac{\pi}{2}$, $\tilde v_2=- \Tfrac{2}{\pi}$, $\rho_1=2\sqrt{\Tfrac{2}{\pi}}$, $\kappa_0=-2$,
and formally, for $c\uparrow\infty$,
$l_3=0$, $v_0=1$, $\tilde v_2=0$, $\rho_1=0$, $\kappa_0=0$.
%
\end{Prop}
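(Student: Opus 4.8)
The plan is to reduce every quantity in \eqref{LVRK2} to the raw moment functions $R_k(t):=\Ew\,\psi(X-t)^k$, $k=1,\dots,4$, for $X\sim\mathcal N(0,1)$, and to extract the coefficients of (D) by Taylor expansion of the $R_k$ at $t=0$. Through the binomial identities $L=R_1$, $V^2=R_2-R_1^2$, $\Ew(\psi_t^0)^3=R_3-3R_1R_2+2R_1^3$ and $\Ew(\psi_t^0)^4=R_4-4R_1R_3+6R_1^2R_2-3R_1^4$, the whole task becomes the computation of $R_k(0),R_k'(0),R_k''(0),R_k'''(0)$ for the Huber form $\eta_c$. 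The decisive simplification is that, since $\eta_c$ is piecewise linear with $\psi'=A_c\mathbf 1_{[-c,c]}$ almost everywhere, differentiation in $t$ collapses cleanly: on $\{|X-t|\le c\}$ one has $\psi(X-t)=A_c(X-t)$, so $R_k'(t)=-kA_c^{\,k}\,\Ew[(X-t)^{k-1}\mathbf 1_{\{|X-t|\le c\}}]$, and each further derivative produces a truncated Gaussian moment that evaluates in closed form in terms of $\Phi$ and $\varphi$ at the arguments $c\pm t$. Evaluating at $t=0$ and iterating yields all required coefficients. Since $\Phi$ and $\varphi$ are smooth and $v_0^2>0$, the maps $L,V,\rho,\kappa$ are real-analytic near $0$, so (D) (indeed (D$'$)) holds with the stated remainder orders for any $\delta$.

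Before the computation I would dispatch the qualitative claims. Membership $\eta_c\in\Psi$ is immediate: oddness of $\eta_c$ and symmetry of $\mathcal N(0,1)$ give $\Ew\psi=0$, while $\Ew[\psi\Lambda_f]=A_c\big(\Ew[X^2\mathbf 1_{\{|X|\le c\}}]+c\,\Ew[|X|\mathbf 1_{\{|X|>c\}}]\big)=1$ is exactly the normalisation defining $A_c=(2\Phi(c)-1)^{-1}$; together with boundedness $\sup|\psi|=A_cc$ and monotonicity this establishes (bmi). Condition (C) follows from Lemma~\ref{clemn}, because $\eta_c$ is continuous and $\mathcal L(\eta_c(X))$ has a nontrivial absolutely continuous part on $(-b,b)$. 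The exponential sharpening of the decay bounds (Lb) and (Vb) is read off the Gaussian tails: for $t\to+\infty$ the shifted variable $X-t$ visits $[-c,c]$ only on the event $\{X\ge t-c\}$, of probability $\LO(\varphi(t))$, whence both $L(t)-\check b$ and $V(t)^2$ are $\LO(e^{-\gamma t^2})$, and symmetrically as $t\to-\infty$.

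For the coefficients themselves I would exploit symmetry first. Oddness of $\psi$ forces $R_1(0)=R_3(0)=0$, hence $\rho_0=0$; and $V(t)^2=\Var\psi(X-t)$ is even in $t$ (replace $X$ by $-X$ and use oddness), so $V'(0)=0$, i.e.\ $\tilde v_1=0$. The first-order data come from $R_1'(t)=-A_c[\Phi(c+t)-\Phi(t-c)]$, which gives $l_1=R_1'(0)=-1$; its second derivative vanishes at $0$ by evenness of $\varphi$, so $l_2=0$; and $R_1'''(0)=2cA_c\varphi(c)$ yields $l_3=2c\varphi(c)/(2\Phi(c)-1)$. The value $v_0^2=R_2(0)=A_c^2[(2\Phi(c)-1)-2c\varphi(c)+2c^2(1-\Phi(c))]$ rearranges, using $A_c(2\Phi(c)-1)=1$ and $b=A_cc$, into the stated $2b^2(1-\Phi(c))+A_c(1-2b\varphi(c))$. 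The remaining constants are then $\tilde v_2=(V^2)''(0)/(2v_0^2)$ with $(V^2)''(0)=R_2''(0)-2$, $\rho_1=(R_3'(0)+3v_0^2)/v_0^3$, and $\kappa_0=R_4(0)/v_0^4-3$, each obtained from the same closed forms for the truncated moments $R_2'',R_3',R_4$; the common factor $A_c^4$ then cancels between numerator and $v_0^4$ in $\kappa_0$.

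Finally the limiting regimes follow by inserting the asymptotics of $\Phi$ and $\varphi$: as $c\downarrow0$ one uses $2\Phi(c)-1\sim c\sqrt{2/\pi}$, so $A_c\to\infty$ and $b\to\sqrt{\pi/2}$, recovering the sign-function (median) limit with $v_0^2=\pi/2$; as $c\uparrow\infty$ one has $A_c\to1$ and $\varphi(c)\to0$, recovering $\psi\to\Lambda_f$ and hence the full-efficiency values $l_3=0$, $v_0=1$, $\tilde v_2=\rho_1=\kappa_0=0$. The main labour---and the reason a symbolic algebra system is invoked---lies in the bookkeeping for $\tilde v_2$, $\rho_1$ and $\kappa_0$: although symmetry annihilates many boundary terms at $t=0$, assembling the second- and third-order truncated-Gaussian moments together with their cancellations against the $L$-corrections is precisely where sign and factor errors are easiest to commit.
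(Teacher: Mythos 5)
Your proposal is correct and follows essentially the same route as the paper's proof: reduce $V,\rho,\kappa$ to the raw moment functions $L=R_1$, $S=R_2$, $M=R_3$, $N=R_4$ via the same central-moment identities, evaluate truncated Gaussian moments in closed form through $\Phi$ and $\varphi$, invoke Lemma~\ref{clemn} for (C), and use Gaussian tail (Mill's ratio) bounds for the exponential decay in (Vb). The only difference is bookkeeping---you differentiate $R_k$ under the integral using $\psi'=A_c\Jc_{[-c,c]}$ a.e.\ and then evaluate at $t=0$, while the paper first writes $L(t),S(t),M(t),N(t)$ in closed form and then expands---which is a matter of presentation, not substance.
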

%
%
%
\begin{Thm}[Main Theorem]\label{main}
  In our one-dim.\ location model assume {(bmi)} to {(C)}  
\begin{ABC}
  \item the maximal MSE of the M-estimator $S_n$ to scores-function $\psi$
  expands to
  \begin{eqnarray}
    R_n(S_n,r,\ve_0)
    &=&{r}^{2}{b}^{2}+{v_0}^{2}+\Tfrac{r}{\sqrt{n}\,}\,A_1+\Tfrac{1}{n}\,A_2+\Lo(n^{-1})\label{mainres}
  \end{eqnarray}
with
\begin{eqnarray}
A_1&=&  {v_0}^{2}\,\Big( \pm(4\,\tilde v_1+3\,l_2 \,)b+1 \Big)+{b}^{2} +
 [2\,{b}^{2}\pm l_2\,{b}^{3} \,]\,{r}^{2}\label{A1defr}\\
A_2&=&{{v_0}^{3}\,\Big((l_2+2\,\tilde v_1 \,)\rho_0+\Tfrac{2}{3}\,\rho_1\Big)+
 {v_0}^{4}\,(3\,\tilde v_2+{\Tfrac {15}{4}}\,{l_2^2}+l_3+9\,{\tilde v_1}^{2}+
 12\,\tilde v_1\,l_2 \,)}+\nonumber\\
 &&\quad+[\, {v_0}^{2}\,\Big( (3\,\tilde v_2+3\,{\tilde v_1}^{2}+\Tfrac{15}{2}\,{l_2^2}+2\,l_3+
 12\,\tilde v_1\,l_2 \,){b}^{2}+1\pm (8\,\tilde v_1+6\,l_2 \,)\,b \Big)+\nonumber\\
 &&\quad\hphantom{+[}\pm 3\,l_2\,{b}^{3}+
 5 \,{b}^{2} \,]\,{r}^{2}+\Big( (\Tfrac{5}{4}\,{l_2^2}+\Tfrac{1}{3}\,l_3 \,){b}^{4}
 \pm 3\,l_2\,{b}^{3} +3\,{b}^{2}\Big)\,{r}^{4}\label{A2defr}
\end{eqnarray}
and we are in the $-\,[+]$-case depending on whether  \eqref{contbed1} or \eqref{contbed2} below applies.
\item
let $P^{\SSs \rm di}_{n}:=\bigotimes_{i=1}^n P^{\SSs \rm di}_{n,i}$ be contaminating measures for \eqref{contadef}.
Then $Q_n$ with $P^{\SSs \rm di}_{n}$ as  contaminating measures generates maximal risk in \eqref{mainres} if
for $k_1>1$ and $k_2>2\vee (\frac{3}{2}+\frac{3}{2\delta})$ with $\delta$ from (Vb)
and $K_1(n)=\ulcorner k_1 r\sqrt{n} \urcorner$
either
\begin{equation}
  \label{contbed1}
(P^{\SSs \rm di}_{n}) \mbox{ is  $K_1(n)$--concentrated left of $\check y_n- b\sqrt{k_2\log(n)/n}$ up to }\Lo(n^{-1})
\end{equation}
or
\begin{equation}
  \label{contbed2}
(P^{\SSs \rm di}_{n}) \mbox{ is  $K_1(n)$--concentrated right of $\hat y_n+ b\sqrt{k_2\log(n)/n}$ up to }\Lo(n^{-1})
\end{equation}
More precisely, if $\sup\psi \,<\,[>]\, -\inf \psi$, the maximal MSE is achieved by contaminations according to \eqref{contbed1}
[\eqref{contbed2}]. In case $\sup \psi=-\inf\psi$,  \eqref{contbed1} [\eqref{contbed2}] applies if
\begin{equation} \label{wokontlra}
\tilde v_1\,>\,[<]-\Tfrac{l_2}{4}\Big(\Tfrac{b^2}{v_0^2}(r^2+3)(1+\Tfrac{r}{\sqrt{n}}-\Tfrac{2r^2}{n})+3(1-\Tfrac{b^2}{v_0^2})\Big)
\end{equation}
If $\sup \psi=-\inf\psi$ and there is ``$=$'' in \eqref{wokontlra}, \eqref{contbed1} and \eqref{contbed2} generate the same
risk up to order $\Lo(n^{-1})$.
\end{ABC}
\end{Thm}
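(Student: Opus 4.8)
The plan is to express the $n$-scaled MSE through the tail-integral representation of a second moment and then convert tails of $S_n$ into tails of the defining sum $T_t:=\Tsum_{i\le n}\psi_t(X_i)$. Concretely, writing $t=s/\sqrt n$ and using $n\,\Ew|S_n|^2=2\int_0^\infty s\,\Pr(\sqrt n\,|S_n|>s)\,ds$, I would split $\Pr(\sqrt n\,|S_n|>s)$ into its two one-sided tails and invoke the monotone M-estimator identities \eqref{mest2}, so that $\Pr(S_n>s/\sqrt n)=\Pr(T_{s/\sqrt n}>0)$ at continuity points, up to the event $S_n^{\ast}\neq S_n^{\ast\ast}$ which by Lemma~\ref{Kohllem} costs only $\LO(e^{-\gamma n})$ and is absorbed into $\Lo(n^{-1})$. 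Likewise the passage from the conditional neighborhood $\tilde{\cal Q}_n(r;\ve_0)$ to the unconditional one is exponentially cheap by \eqref{expneg2}. The whole problem is thereby reduced to a uniform (in $s$) analysis of $\Pr_{Q_n}(T_{s/\sqrt n}\gtrless 0)$ together with its maximization over the contaminating laws $P^{\SSs \rm di}_{n}$.

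Next I would condition on the contamination pattern $(U_i)$. Given that exactly $m$ coordinates are contaminated, $T_t$ splits into a sum of $n-m$ i.i.d.\ ideal summands $\psi_t(X_i^{\SSs \rm id})$ and $m$ bounded contaminated summands taking values in $[\check b,\hat b]$. Since $T_t$ is monotone in each contaminated value and we want to push $\Pr(T_t>0)$ (resp.\ $\Pr(T_t<0)$) up, the worst case sets every contaminated summand to $\hat b$ (resp.\ $\check b$); this identifies point masses escaping to $+\infty$ (resp.\ $-\infty$) as the MSE-maximizing contaminations and is the seed of part~(b). For the ideal sum I would apply an Edgeworth expansion, legitimate and locally uniform in $t$ by the Cram\'er-type condition (C), with correction terms governed by the standardized cumulants $L,V,\rho,\kappa$; assumption (D) then supplies their Taylor coefficients at $t=0$. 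Averaging this conditional expansion over the Hoeffding-concentrated number $m$ of contaminated coordinates, whose mean is $r\sqrt n$ and whose binomial fluctuations feed the higher-order terms, yields an expansion of $\Pr_{Q_n}(T_{s/\sqrt n}\gtrless 0)$ in powers of $n^{-1/2}$ with coefficients explicit in $s$, $r$, $b$, $v_0$ and the cumulant coefficients.

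Finally I would insert this expansion into $2\int_0^\infty s(\cdots)\,ds$ and integrate term by term; the Gaussian leading profile makes the resulting moment integrals elementary, and collecting the $n^{0}$, $n^{-1/2}$ and $n^{-1}$ contributions produces $r^2b^2+v_0^2$, $A_1$ and $A_2$ as in \eqref{A1defr}--\eqref{A2defr} (this is exactly the MAPLE-assisted Taylor bookkeeping). For part~(b), the thresholds $\check y_n\mp b\sqrt{k_2\log(n)/n}$ and the cardinality bound $K_1(n)=\ulcorner k_1 r\sqrt n\urcorner$ arise by quantifying how far, and for how many coordinates, the contamination must sit near the extremes so that its loss relative to the idealized point mass at $\pm\infty$ stays $\Lo(n^{-1})$: the $\sqrt{\log(n)/n}$ scale is the precision at which a shift in $\psi$-value still affects the $n$-scaled integral, and Hoeffding control of $\Tsum U_i$ restricts the relevant index sets to size $\LO(r\sqrt n)$. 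Whether \eqref{contbed1} or \eqref{contbed2} wins is decided by $\hat b$ versus $-\check b$ at first order; in the symmetric case $\hat b=-\check b$ the leading asymmetry cancels and the tie is broken at the next order by the slope/scale coefficients $l_2,\tilde v_1$, yielding the sign condition \eqref{wokontlra}.

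I expect the main obstacle to be the uniform integrability underpinning the term-by-term integration: the Edgeworth expansion is accurate only in the moderate range of $s$, so I must dominate the far tail of the $s$-integral uniformly over all admissible contaminations and show it contributes $\Lo(n^{-1})$. This is precisely where the breakdown-driven thinning $\Tsum U_i<\ve_0 n$ is indispensable, since it keeps $S_n$ from running off and restores the square-uniform-integrability that fails on the unthinned neighborhoods; the delicate step is therefore to combine large-deviation bounds for the ideal sum with the breakdown restriction to control the tail, rather than the laborious but routine central Taylor computation.
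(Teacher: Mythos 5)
Your overall route is the paper's proof skeleton almost step for step: the tail-integral representation of the second moment, conversion of tails of $S_n$ into tails of the defining sum via the monotone identities \eqref{mest2} together with Lemma~\ref{Kohllem}, conditioning on the number $K$ of contaminated coordinates and then on the realized value of the contaminated $\psi$-sum, a $t$-uniform Edgeworth expansion justified by condition (C) (the paper's Theorem~\ref{berry2}), Taylor bookkeeping with the coefficients from (D), integration over $s$ by Gaussian moments and over $K$ by binomial moments, and separate domination of the far range of $s$ by Hoeffding/Chebyshev bounds in which the breakdown thinning $\Tsum U_i<\ve_0 n$ is exactly what makes the tail contribution $\Lo(n^{-1})$ (the paper organizes this as the case tableau (I)--(IV)).

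There is, however, one genuine gap, and it sits at the heart of part (b): your selection of the least favorable contamination. You argue that, since the sum is monotone in each contaminated summand, the worst case sets every contaminated value to $\hat b$ (resp.\ $\check b$). But the MSE is the \emph{sum of both tails}, $\int_0^\infty[\Pr(S_n\ge\sqrt{t}\,)+\Pr(S_n\le-\sqrt{t}\,)]\,dt$, and the two tails respond to the contamination sum $\tilde t$ in opposite directions: increasing $\tilde t$ inflates the right tail while deflating the left tail. Per-tail monotonicity therefore only says that each tail separately is maximized at an (opposite) extreme; it does not exclude that their sum is maximized by an intermediate configuration (contamination split between the two sides, or placed at finite points). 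The paper closes this by a convexity argument: after the Edgeworth/Taylor reduction, the integrand $h_n(s)$ satisfies $h_{n,t,t}(s)=2+\LO(\log(n)^3/\sqrt{n}\,)$ uniformly in $s$ on the central range, i.e.\ it is eventually strictly convex in the normalized contamination value $t^\natural$, so the maximum over $t^\natural$ is attained at one of the two endpoints $k\check b/\sqrt{\bar n}$ or $k\hat b/\sqrt{\bar n}$; only then are the two endpoints compared (first via $\hat b$ versus $-\check b$, and in the symmetric case via the next-order terms, giving \eqref{wokontlra}). Once your expansion is in hand this convexity is cheap to verify, but it --- not monotonicity --- is the argument that makes \eqref{contbed1}/\eqref{contbed2} maximal over \emph{all} of $\tilde{\cal Q}_n(r;\ve_0)$; without it, part (b) and the attainment claim in part (a) remain unproven.
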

\begin{Rem}\rm\small\label{BigRem}
  \begin{ABC}
\item Curiously, although being of corresponding order, no $\rho_0$ [$\kappa_0$]-term shows up in the
correction term $A_1$ [$A_2$], which is probably due to the special loss function.
\item
As announced, for $r=0$, the approximation is one order faster than under contamination.
\item
{\bf  The maximal MSE on $\tilde {\cal Q}_n$ is always underestimated by f-o asymptotics,}
as maximality always forces $A_1$ to be non-negative.
\item
Let $Q_n^0$ be any distribution in $\tilde {\cal Q}_n$ attaining maximal risk
in Theorem~\ref{main}. Under symmetry or more specifically if $ l_2=v_1=\rho_0=0$,
\eqref{mainres} becomes
\begin{eqnarray}
&&n\,\Ew_{Q_n^0}[S_n^2\,]=
\left(  {r}^{2}{b}^{2}+{v_0}^{2}\right) \left(1+\Tfrac{r}{\sqrt{n}}\right)+\Tfrac{r}{\sqrt{n}}\,
 \left(b^2(1+r^2)\right)+\LO(n^{-1})\label{symloesa}
\end{eqnarray}
\item
In the ideal Gaussian location model (i.e.\ $r=0$), plugging in the (limiting) results for $c=0$ from section~\ref{illust}, the RHS of \eqref{symloesa}
 becomes
\begin{equation}
\frac{\pi}{2}\left(1+{n}^{-1}(\frac{\pi}{2}-\frac{5}{3})\right)+\Lo(n^{-1})\doteq 1.5708 (1-\Tfrac{0.0958}{n})+\Lo(n^{-1})
\end{equation}
suggesting an overestimation of the risk by the f-o asymptotics. This
 is to be compared to the result for the median for odd sample size from \citet{Ruck:03b}:
\begin{eqnarray}
n\,\Ew_{F^n}[{\rm Med}_n^2\,]&=&\frac{\pi}{2}\left(1+{n}^{-1}(\frac{\pi}{2}\!-\!2)\right)
+\Lo(n^{-1})
\doteq 1.5708 (1-\Tfrac{0.4292}{n})\!+\!\Lo(n^{-1})
\end{eqnarray}
so f-o asymptotics indeed overestimates the risk.
The difference of $\frac{\pi}{6\,n}$ is due to the failure of cond.~(C).
\item {\bf Relevance for the fixed neighborhood approach:} If you consider the fixed neighborhood approach (of radius $\ve$) and formally plug in $r=\ve\sqrt{n}$
into \eqref{mainres}, you obtain the following approximation for the unstandardized maximal MSE on the thinned out (fixed-radius) neighborhood:
\begin{eqnarray}
&&{\rm MSE}(S_n,\ve,\ve_0)=\ve^2b^2+ \ve^3\,[2\,{b}^{2}\pm l_2\,{b}^{3} \,]+
\ve^4\,\Big( (\Tfrac{5}{4}\,{l_2^2}+\Tfrac{1}{3}\,l_3 \,){b}^{4}
 \pm 3\,l_2\,{b}^{3} +3\,{b}^{2}\Big)+\nonumber\\
 &&\quad+
\frac{1}{n}v_0^2+\frac{\ve}{n}\Big[
{v_0}^{2}\,\Big( \pm(4\,\tilde v_1+3\,l_2 \,)b+1 \Big)+{b}^{2}\Big] +
\frac{\ve^2}{n} \Big[ 5 \,{b}^{2} \pm 3\,l_2\,{b}^{3}+\nonumber\\
&&\quad +{v_0}^{2}\Big( (3\tilde v_2+3{\tilde v_1}^{2}+\Tfrac{15}{2}{l_2^2}+2l_3+
 12\tilde v_1l_2 ){b}^{2}+1\pm (8\tilde v_1+6l_2 )\,b \Big)
\,\Big]+R_n
\end{eqnarray}
for some remainder $R_n$ the order of which however is uncertain; it should be valid for small $\ve$, and is at least of order $\LO(1/n^2)+\LO(\ve^5)$.
These terms once more show that for the fixed-neighborhood approach, already for moderate sample sizes, bias becomes
dominant, i.e.; in our case, we end up with the median as optimal procedure.
  \end{ABC}
\end{Rem}
%
\subsection{Cross-checks}
\subsubsection{Check with results by Fraiman et al.} \label{xcheckfrai}
In the symmetric case, the first cross check comes with the asymptotic  formula for variance ${\rm asVar}(\psi)$ and (maximal)
bias $B(\psi):={\rm asBias}(\psi)$ as to be found in \citet{F:Y:Z:01}, where we have to identify  $\ve=r/\sqrt{n}$.
Here,  ${\rm asBias}(\psi)/\sqrt{n}$ is defined as zero $\beta$ of $\beta\mapsto (1-\ve) \int\psi_{\beta}\, dF+\ve b$,
and 
  ${\rm asVar}(\psi):=V_1/V_2^2$ for $V_1=(1-\ve)\int \psi_{B(\psi)}^2\, dF+\ve b^2$ and $V_2=(1-\ve) \int \dot\psi_{B(\psi)}\, dF$.
%
Assuming that $\int \dot\psi_{B(\psi)}\, dF=L'(B(\psi))$ and using that
$\int\psi_{B(\psi)}\, dF=-B(\psi)+\Lo(B^2))$, $
\int\psi_{B(\psi)}^2\, dF=V(B(\psi))^2+L(B(\psi))^2=v_0^2(1+\Lo(B))$,
$L'(B(\psi))^2=-1+\Lo(B)$,
we get that
\begin{eqnarray}
{\rm asBias}(\psi)&=&\sqrt{n}\, b\ve (1+\ve+\Lo(\ve))=rb(1+\Tfrac{r}{\sqrt{n}}+\Lo(n^{-1/2}))\\
{\rm asVar}(\psi)&=&(1+\ve)v_0^2+\ve b+\Lo(\ve)=v_0^2+\Tfrac{r}{\sqrt{n}}(v_0^2+b)+\Lo(n^{-1/2})
\end{eqnarray}
and hence ---in accordance with formula \eqref{mainres}---
\begin{eqnarray}
{\rm asMSE}(\psi)
&=&(v_0^2+r^2b^2)(1+\Tfrac{r}{\sqrt{n}})+\Tfrac{r}{\sqrt{n}}b^2(1+r^2)+\Lo(n^{-1/2})
\end{eqnarray}
\subsubsection{Check with higher order asymptotics for the median}
The second check comes with the higher asymptotics for the median from \citet{Ruck:03b}.
In a first step, we assume that with $f_0>0$ and some $\delta\in(0,1]$,
\begin{equation}
  f(t)=f_0+f_1t+\LO(t^{1+\delta})
\end{equation}
As for the median, $\psi_{\rm\SSs Med} = \sign(x)/({2\,f_0})$, we have
$v_0=b=\Tfrac{1}{2f_0}$ and $\ve_0=1/2$.
For the moment we ignore the fact, that conditions~(C)/(C') are not fulfilled.
Easy calculations give
$l_2=-f_1/f_0$, $\tilde v_1=0$, $\rho_0=0$,
so that with our formula \eqref{mainres} we obtain for odd sample size $n$
\begin{eqnarray}
R_n(\psi_{{\rm\SSs Med}_n},r,\Tfrac{1}{2})\!&\!=\!&\!\frac{1}{4f_0^2}\Big((1+ r^{2})\big[1+\Tfrac{2r}{\sqrt{n}}\big]-\Tfrac{r}{\sqrt{n}}\frac{f_1}{2f_0^2}(r^2+3) \Big)+\Lo(n^{-1/2})
\end{eqnarray}
in complete agreement with \citet{Ruck:03b}.
%
%
As a next step we compare this to t-o asymptotics to be obtained
by \eqref{mainres}---again ignoring condition (C). We get
$l_3=-f_2/f_0$, $\tilde v_2=-4f_0^2$, $\rho_1=4f_0$,
and hence for odd sample size $n$, after some reordering
\begin{eqnarray}
\!&&\!R_n(\psi_{{\rm\SSs Med}_n},r,\Tfrac{1}{2})\stackrel{?}{=}\Lo(\Tfrac{1}{n})+\frac{1}{4 f_0^2}\Bigg\{
(1+r^2)+\Tfrac{r}{\sqrt{n}}\Big(2(1+r^2)+\Tfrac{r^2+3}{2}\frac{|f_1|}{f_0^2}\Big)+\nonumber\\
\!&&\!\quad +\Tfrac{1}{n}\Big(\fbox{$\Tfrac{4}{3}$}-3+3r^2+3r^4+\Tfrac{3r^2(3+r^2)}{2}\frac{|f_1|}{f_0^2}- 
\Tfrac{3+6r^2+r^4}{12}\frac{f_2}{f_0^3}+\Tfrac{5(3+6r^2+r^4)}{16}\frac{f_1^2}{f_0^4}\Big)\Bigg\}\label{MALL}
\end{eqnarray}
and it is just the framed term $\frac{4}{3}$, which is coming in as $\frac{2}{3} \rho_1 v_0$ from \eqref{A2defr},
which causes a difference to the result of \citet{Ruck:03b}, where we get the value $1$ instead. This discrepancy, however,
is in fact due to the failure of condition (C), because Theorem~\ref{berry2}, which we need to prove  \eqref{mainres}, is not
available in this case.
\section{Relations to other approaches}\label{relsec}\req
%
%
Of course the idea of assessing the quality / speed of convergence of
CLT-type arguments by means of higher order asymptotics is common
in Mathematical Statistics, cf.\  among others \citet{Ib:Li:71}, \citet{Bh:Ra:76}, \citet{Pfa:85},
\citet{Hal:92}, \citet{Ba-N:C:94} and \citet{Ta:Ka:00}.\\
Asymptotic expansions of the moments of statistical estimators ---like MSE in our case--- have already
been studied by \citet{Gus:76} and \citet{Pfaf:Diss}; both approaches, however, only
consider the ideal model, and work with pointwise expansions of the likelihood.\\
Also the idea to improve convergence by means of saddlepoint techniques and
conjugate densities, respectively, has been a large success in this context,
cf.\  \citet{Da:54}, \citet{Ham:73}, \citet{Fi:Ro:90}.\\
Our approach is simpler in the sense that instead of approximating
the c.d.f.\ or the density of our procedures 
on the whole range of arguments, we directly approximate our risk. Doing so, we do not run into problems of bad
approximations in the tails of a distribution, because all that is interesting for
our  risk will occur within a (decreasing) compact; using saddlepoint techniques,
we would have to solve the saddlepoint-equation for a grid of evaluation points
$t_i$ to get an accurate estimate for the density which makes the corresponding solution less explicite than
ours.\\
Even more important, note that a highly accurate approximation of the distribution
of the M-estimator would not suffice to enforce uniform convergence of the MSE, which
was the reason for our modification of the neighborhoods \eqref{modifdefi}.
Also, contrary to ``usual'' small
sample asymptotics,  by our approach no particular contamination has to be assumed right from
the beginning but we rather identify a least favorable one within the proof.
%
\\
In the setup of saddlepoint-approximations,
one would apply \citet[Theorem~4.3]{Fi:Ro:90} 
which at least covers the Hampel-type solutions. 
The pointwise formulation of assumption A4.2 therein, i.e.; 
there exists an open subset $U\subset \R$, such that 
(i) for each $\theta \in \R$, $F(U-\theta)=1$ 
and (ii) $D\psi$, $D^2\psi$, $D^3\psi$ exist on $U$,
seems problematic, however,
as it allows for pathological $\psi$-functions defined similar to the Cantor
distribution function (while $F$ may be something like ${\cal N}(0,1)$),
for which the interchange of differentiation and integration becomes awkward.
As may be read off from \eqref{mainres}, in the ideal model, as for the saddlepoint approach, we, too,  get an expansion of order $1/n$,
a fact, which is {\em not\/} due to symmetry of $\Lambda$ and/or $\psi$! So in fact
we get the same approximation quality as with the saddlepoint approach ---indeed, by the Taylor-expansion step
in section~\ref{neuexp}, we extract an argument to be expanded from the exponential, which also is an idea behind
the saddlepoint approximation, cf.\  \citet[p.~26]{Fi:Ro:90}. On the other hand, even in the restricted neighborhoods of
\eqref{modifdefi},
it is not clear to the present author,
if in general, the saddlepoint approximation holds uniformly in $t$, so it is not clear,
whether an improved approximation for the density will result in a better approximation of the risk.
A detailed empirical and numerical investigation of such questions is contained in \citet{Ru:Ko:04}.
%
%
%

\section{A simulation study and numerical evaluations} \label{simsec}\req
Before starting with the theoretical findings we summarize the results of a simulation study that actually lead us to the closer
examination of the higher order expansions of the MSE.
\subsection{Simulation design}
Under {\tt R 2.11.0}, c.f.\ \citet{RMANUAL}, we simulated $M=10000$ runs of sample size $n=5,10,30,50,100$ in the ideal location model
${\cal P}={\cal N}(\theta,1)$ at $\theta=0$. In a contaminated situation, we used observations stemming from
\begin{equation}
Q_n={\cal L}\{[(1-U_i)X_i^{\SSs \rm id}+U_i X_i^{\SSs \rm di}]_i\,\Big|\,\sum U_i\leq \ulcorner n/2 \urcorner -1\,\}
\end{equation}
for $U_i\iid {\rm Bin}(1,r/\sqrt{n})$, $X_i^{\SSs \rm id}\iid{\cal N}(0,1)$, $X_i^{\SSs \rm di}\iid \Jc_{\{100\}}$
all stochastically independent and for contamination radii $r=0.1, 0.25, 0.5, 1.0$. \\
As estimators we considered the median (with the mid-point variant for even sample size), and M-estimators to Hampel-type
ICs $\eta_c$ of form \eqref{HK1} with clipping heights $c=0.5, 0.7, 1, 1.5, 2$ and $c_0(r)$, the f-o-o clipping height according to
\eqref{HK3}. All empirical ${\rm MSE}$'s come with asymptotic  $95\%$--confidence intervals,
which are based on the CLT for the variables \begin{equation}%
\overline{\rm empMSE}_n=\Tfrac{n}{10000}\sum\nolimits_j [S_n({\rm sample}_j)]^2
\end{equation}
Note that with respect to 
\eqref{contbed1}/\eqref{contbed2},
and the considered estimators, a contamination point $100$ will
largely suffice to attain the maximal MSE on $\tilde {\cal Q}_n$.
\subsection{Numerical evaluations}\label{numsec}
By means of relations~\eqref{mest2} we may reduce the problem of finding the exact distribution of our M-estimators
to the calculation of the ``exact'' distribution of $\sum_i \psi(X_i)$. For this purpose, we may apply the general
convolution algorithm for arbitrarily distributed real-valued random variables
introduced in \citet{R:K:10:FFT}. This algorithm is based on FFT resp.\ discrete Fourier Transformation (DFT) and is implemented in
{\tt R} within the package {\tt distr} available on {\tt CRAN}, see \citet{RKSC:06}, \citet{RMandistr}.\\
In \citet{Ru:Ko:04}, to increase accuracy for M-estimators to Hampel IC's, we extend our algorithm from {\tt distr} to (a) better cope with
mass points in $\pm b$ and (b) to calculate the ``exact'' finite-sample maximum MSE on $\tilde {\cal Q}_n$.
Here we confine ourselves to attach extra columns ``numeric'' to the
following tables summarizing our simulation. ``numeric'' will then stand for application of
Algorithm~C respectively Algorithm~D from \citet{Ru:Ko:04}.\\
More specifically, for ``exact'' terms, as worked out in Algorithm~C (ibid.), we have to take into account that
after conditioning w.r.t.\ the event that the number of contaminations $K$ in the sample is less than
half the sample size, the switching variables $U_i$ from \eqref{Uidef} no longer are independent.
So we may only apply the FFT-based Algorithm from \citet{Ru:Ko:04} to an absolutely continuous inner part and have to calculate the rest
by explicitly summing up the events---for details see the cited reference and the {\tt R}-program available on the web-page to this article.

%
On the other side, Algorithm~D uses the fact that by the exponential negligibility shown in
subsection~\ref{expneg}, the dependency of the $U_i$ may be ignored for $n$ sufficiently large---in our case this
was possible for $n\geq 30$, moderate radius $r$ and robust clipping height $c$.
Then, we simply may determine the corresponding convolutions of the corresponding
distributions of the summands directly by Algorithm~4.4 from \citet{R:K:10:FFT}.\\
To demonstrate the negligibility, for $n\leq 30$, we calculate both ``exact'' terms (Algortihm~C) and those
obtained by superposition of the a.c. part and the random walk, ignoring all mass points of the law of the sum (Algortihm~D).
\subsection{Results}
A more detailed account of the results of the simulation study in tables may be found at
the web-page to this article. 
Here we only present some few results which led to the subsequent investigation.
\subsubsection{Fixed procedure, fixed radius}
To get an idea of the speed of the convergence of the MSE  to its asymptotic  values,
we consider  the {\tt H07}-estimator
from \citet{A:B:H:H:R:T:72}, i.e.\ the M-estimator to $\eta_{0.7}$ at $r=0.1$ and at $r=0.5$ for different sample sizes $n$.\\
The simulated empirical risk comes with an (empirical) $95\%$ confidence interval and is compared to the corresponding numerical approximations and
to the f-o, s-o, and t-o asymptotics from Theorem~\ref{main}.
Corresponding tables for the f-o-o M-estimator to $\eta_{c_0}$ may be drawn from the web-page to this article.
The results are tabulated in Tables~\ref{tabel2}/\ref{tabel4}.
\begin{table}
\caption{\label{tabel2}{emp., num., and as.\   }$\boldmath {\rm MSE}$ at $\boldmath r=0.1$, $c=0.7$}
\begin{center}
\begin{tabular}{rr||r@{\hspace{0.5em}\hfill$[$}c@{;}c@{$\,]\;$}|rr|rrr|}
\multicolumn{2}{l||}{$n$/}&\multicolumn{3}{c|}{simulation}&\multicolumn{2}{c|}{numeric}
&\multicolumn{3}{c|}{asymptotics}\\
\multicolumn{2}{r||}{situation}&  \multicolumn{1}{c}{$\bar S_n$}& \multicolumn{1}{c}{[low;}&  \multicolumn{1}{c|}{up]}
&\multicolumn{1}{c}{\footnotesize Algo~C}&\multicolumn{1}{c|}{\footnotesize Algo~D}&\multicolumn{1}{c}{$n^0$}&
 \multicolumn{1}{c}{$n^{-1/2}$}& \multicolumn{1}{c|}{$n^{-1}$}\\
\hline
                               &${\rm \Ss id}  $& 1.147 & 1.114& 1.179 &  1.172               & 1.168 & 1.187 & 1.187 & 1.169\\
\raisebox{1.5ex}[-1.5ex]{$  5$}&${\rm \Ss cont}$& 1.403 & 1.359& 1.447 &  1.434               & 1.535 & 1.205 & 1.342 & 1.345\\
\hline
                               &${\rm \Ss id}  $& 1.179 & 1.139& 1.205 &  1.177               & 1.174 & 1.187 & 1.187 & 1.178\\
\raisebox{1.5ex}[-1.5ex]{$ 10$}&${\rm \Ss cont}$& 1.331 & 1.292& 1.369 &  1.327               & 1.326 & 1.205 & 1.302 & 1.303\\
\hline
                               &${\rm \Ss id}  $& 1.209 & 1.175& 1.242 &  1.183               & 1.180 & 1.187 & 1.187 & 1.184\\
\raisebox{1.5ex}[-1.5ex]{$ 30$}&${\rm \Ss cont}$& 1.301 & 1.264& 1.337 &  1.265               & 1.262 & 1.205 & 1.261 & 1.261\\
\hline
                               &${\rm \Ss id}  $& 1.192 & 1.158& 1.225 &\multicolumn{1}{c}{--}& 1.181 & 1.187 & 1.187 & 1.185\\
\raisebox{1.5ex}[-1.5ex]{$ 50$}&${\rm \Ss cont}$& 1.250 & 1.214& 1.285 &\multicolumn{1}{c}{--}& 1.247 & 1.205 & 1.248 & 1.249\\
\hline
                               &${\rm \Ss id}  $& 1.161 & 1.128& 1.193 &\multicolumn{1}{c}{--}& 1.182 & 1.187 & 1.187 & 1.186\\
\raisebox{1.5ex}[-1.5ex]{$100$}&${\rm \Ss cont}$& 1.212 & 1.178& 1.246 &\multicolumn{1}{c}{--}& 1.232 & 1.205 & 1.236 & 1.236
\end{tabular}
\end{center}
\end{table}
\begin{table}
\caption{\label{tabel4}{emp., num., and as.\   }$\boldmath {\rm MSE}$ at $\boldmath r=0.5$, $c=0.7$}
\begin{center}
\begin{tabular}{rr||r@{\hspace{0.5em}\hfill$[$}c@{;}c@{$\,]\;$}|rr|rrr|}
\multicolumn{2}{l||}{$n$/}&\multicolumn{3}{c|}{simulation}&\multicolumn{2}{c|}{numeric}
&\multicolumn{3}{c|}{asymptotics}\\
\multicolumn{2}{r||}{situation}&  \multicolumn{1}{c}{$\bar S_n$}& \multicolumn{1}{c}{[low;}&  \multicolumn{1}{c|}{up]}
&\multicolumn{1}{c}{\footnotesize Algo~C}&\multicolumn{1}{c|}{\footnotesize Algo~D}&\multicolumn{1}{c}{$n^0$}&
 \multicolumn{1}{c}{$n^{-1/2}$}& \multicolumn{1}{c|}{$n^{-1}$}\\
\hline
                               &${\rm \Ss id}  $& 1.166 & 1.134& 1.199 &   1.172              & 1.168  & 1.187 & 1.187 & 1.169\\
\raisebox{1.5ex}[-1.5ex]{$  5$}&${\rm \Ss cont}$& 2.989 & 2.892& 3.087 &   3.016              & 12.491 & 1.647 & 2.529 & 3.103\\
\hline
                               &${\rm \Ss id}  $& 1.191 & 1.157& 1.224 &   1.177              & 1.174  & 1.187 & 1.187 & 1.178\\
\raisebox{1.5ex}[-1.5ex]{$ 10$}&${\rm \Ss cont}$& 2.934 & 2.836& 3.032 &   2.840              & 4.820  & 1.647 & 2.271 & 2.557\\
\hline
                               &${\rm \Ss id}  $& 1.194 & 1.161& 1.227 &   1.183              & 1.180  & 1.187 & 1.187 & 1.184\\
\raisebox{1.5ex}[-1.5ex]{$ 30$}&${\rm \Ss cont}$& 2.183 & 2.119& 2.247 &   2.167              & 2.167  & 1.647 & 2.007 & 2.102\\
\hline
                               &${\rm \Ss id}  $& 1.165 & 1.133& 1.197 &\multicolumn{1}{c}{--}& 1.181  & 1.187 & 1.187 & 1.185\\
\raisebox{1.5ex}[-1.5ex]{$ 50$}&${\rm \Ss cont}$& 1.946 & 1.893& 1.998 &\multicolumn{1}{c}{--}& 2.008  & 1.647 & 1.926 & 1.983\\
\hline
                               &${\rm \Ss id}  $& 1.192 & 1.159& 1.226 &\multicolumn{1}{c}{--}& 1.182  & 1.187 & 1.187 & 1.186\\
\raisebox{1.5ex}[-1.5ex]{$100$}&${\rm \Ss cont}$& 1.894 & 1.844& 1.944 &\multicolumn{1}{c}{--}& 1.879  & 1.647 & 1.844 & 1.873
\end{tabular}
\end{center}
\end{table}
%
In 
Table~\ref{tabel5} 
we consider the relative MSE, calculated as the quotient
${\rm MSE}(c,r)/{\rm MSE}(c_0(r),r)$. This is a natural
expression to compare the efficiency of different procedures. We compare the empirical terms from the
simulation to the corresponding numerical approximations and to the asymptotic  terms derived by means of Theorem~\ref{main}.
We already recognize a very good
approximation down to very small sample sizes.
\begin{table}
\parbox[t]{12cm}
{\caption[emp., num., and as.\ ${\rm relMSE}$ at $r=0.1,0.5$, $c=0.7$]%
{\label{tabel5}\parbox[t]{11cm}{emp., num., and as.\ ${\rm relMSE}$ at $\boldmath r=0.1,0.5$, $c=0.7$
 relative to $\Var[\bar X_n]$ for ${\rm \Ss id}$ and ${\rm MSE}(c_0(r))$ for ${\rm \Ss cont}$}}%
}
\begin{center}
\begin{tabular}{rr||r|r|rr||r|r|rr||}
&&\multicolumn{4}{c||}{$r=0.1$}&\multicolumn{4}{c||}{$r=0.5$}\\
\multicolumn{2}{l||}{\raisebox{1ex}[-1ex]{$n$/}}&\multicolumn{1}{c|}{sim}&\multicolumn{1}{c|}{num}&\multicolumn{2}{c||}{asymptotics}
&\multicolumn{1}{c|}{sim}&\multicolumn{1}{c|}{num}&\multicolumn{2}{c||}{asymptotics}\\
\multicolumn{2}{r||}{\raisebox{1ex}[-1ex]{situation}}&&\multicolumn{1}{c|}{\footnotesize ex/$\ast$}&%
\multicolumn{1}{c}{$n^0$}&\multicolumn{1}{c||}{$n^{-1/2}$}&%
&\multicolumn{1}{c|}{\footnotesize ex/$\ast$}&\multicolumn{1}{c}{$n^0$}&\multicolumn{1}{c||}{$n^{-1/2}$}\\
\hline
                               &${\rm \Ss id}  $& 1.161 & $1.163\hphantom{^{\SSs \ast}}$& 1.173 & 1.173 & 1.038 & $1.042\hphantom{^{\SSs \ast}}$& 1.041 & 1.041\\
\raisebox{1.5ex}[-1.5ex]{$  5$}&${\rm \Ss cont}$& 1.003 & $0.956\hphantom{^{\SSs \ast}}$& 1.143 & 1.039 & 0.992 & $0.978\hphantom{^{\SSs \ast}}$& 1.006 & 0.989\\
\hline
                               &${\rm \Ss id}  $& 1.167 & $1.166\hphantom{^{\SSs \ast}}$& 1.173 & 1.173 & 1.037 & $1.041\hphantom{^{\SSs \ast}}$& 1.041 & 1.041\\
\raisebox{1.5ex}[-1.5ex]{$ 10$}&${\rm \Ss cont}$& 1.049 & $1.029\hphantom{^{\SSs \ast}}$& 1.143 & 1.065 & 0.993 & $0.977\hphantom{^{\SSs \ast}}$& 1.006 & 0.992\\
\hline
                               &${\rm \Ss id}  $& 1.174 & $1.170\hphantom{^{\SSs \ast}}$& 1.173 & 1.173 & 1.037 & $1.041\hphantom{^{\SSs \ast}}$& 1.041 & 1.041\\
\raisebox{1.5ex}[-1.5ex]{$ 30$}&${\rm \Ss cont}$& 1.094 & $1.086\hphantom{^{\SSs \ast}}$& 1.143 & 1.095 & 0.994 & $0.993\hphantom{^{\SSs \ast}}$& 1.006 & 0.997\\
\hline
                               &${\rm \Ss id}  $& 1.160 & $1.169^{\SSs \ast}$           & 1.173 & 1.173 & 1.038 & $1.041^{\SSs \ast}$           & 1.041 & 1.041\\
\raisebox{1.5ex}[-1.5ex]{$ 50$}&${\rm \Ss cont}$& 1.096 & $1.096^{\SSs \ast}$           & 1.143 & 1.105 & 0.996 & $0.995^{\SSs \ast}$           & 1.006 & 0.999\\
\hline
                               &${\rm \Ss id}  $& 1.180 & $1.170^{\SSs \ast}$           & 1.173 & 1.173 & 1.044 & $1.041^{\SSs \ast}$           & 1.041 & 1.041\\
\raisebox{1.5ex}[-1.5ex]{$100$}&${\rm \Ss cont}$& 1.122 & $1.110^{\SSs \ast}$           & 1.143 & 1.116 & 0.999 & $0.999^{\SSs \ast}$           & 1.006 & 1.001
\end{tabular}
\end{center}
\end{table}%
\subsubsection{Fixed procedure, fixed sample size}
In order  to study the effect of the radius on the quality of the approximation,
we consider the 
M-estimator to $\eta_{0.5}$ at sample size $n=30$ at varying radii.
The results are tabulated in Table~\ref{tabel6}. 
The simulations and the numeric values clearly show that with increasing radius, the approximation quality of f-o
asymptotics decreases, which is conformal to the infinitesimal character of our neighborhoods.
A corresponding table for the more liberal M-estimator to $\eta_{2}$ at sample
size $n=50$ may be drawn from the web-page. 
\\
%
\begin{table}
\caption{\label{tabel6}{emp., num., and as.\ }$\boldmath {\rm MSE}$ at $n=30$, $c=0.5$}
\begin{center}
\begin{tabular}{r||r@{\hspace{0.5em}\hfill$[$}c@{;}c@{$\,]\;$}|rr|rrr|}
&\multicolumn{3}{c|}{simulation}&\multicolumn{2}{c|}{numeric}&\multicolumn{3}{c|}{asymptotics}\\
\multicolumn{1}{l||}{\raisebox{1.5ex}[-1.5ex]{$r$}}&  \multicolumn{1}{c}{$\bar S_n$}&%
\multicolumn{1}{c}{[low;}&  \multicolumn{1}{c|}{up]}& \multicolumn{1}{c}{\footnotesize Algo~C}&%
\multicolumn{1}{c|}{\footnotesize Algo~D}&\multicolumn{1}{c}{$n^0$}&%
  \multicolumn{1}{c}{$n^{-1/2}$}& \multicolumn{1}{c|}{$n^{-1}$}\\
\hline
0.00   & 1.272 & 1.237& 1.307 & 1.259 & 1.256 & 1.263 & 1.263 & 1.259\\
0.10   & 1.374 & 1.336& 1.413 & 1.337 & 1.335 & 1.280 & 1.334 & 1.334\\
0.25   & 1.545 & 1.502& 1.588 & 1.545 & 1.542 & 1.588 & 1.514 & 1.532\\
0.50   & 2.204 & 2.139& 2.268 & 2.189 & 2.187 & 1.689 & 2.037 & 2.128\\
1.00   & 5.362 & 5.219& 5.505 & 5.238 & 5.265 & 2.967 & 4.132 & 4.652
\end{tabular}
\end{center}
\end{table}
%
%
%
\subsubsection{Fixed radius, fixed sample size}
In this paragraph we want to compare M-estimators to different clipping heights and see whether the choice of $c_0$ may also
be considered reasonable for moderate $n$. To this end, we consider the situation 
$r=0.25$ and $n=30$.
The results are tabulated in Tables~\ref{tabel10} and \ref{tabel11}. 
The simulations already indicate that the answer should be affirmative.
The numeric and asymptotic  values for the median are taken from \citet{Ruck:03b}.
Corresponding tables to the situation $r=0.5$ and $n=100$ are on the web-page.
\\
\begin{table}
\caption{\label{tabel10}{emp., num., and as.\ }$\boldmath {\rm MSE}$ at $n=30$, $r=0.25$}
\begin{center}
\begin{tabular}{rr||r@{\hspace{0.5em}\hfill$[$}c@{;}c@{$\,]\;$}|r|rrr|}
\multicolumn{2}{l||}{estimator/}&\multicolumn{3}{c|}{simulation}&\multicolumn{1}{c|}{num}&%
\multicolumn{3}{c|}{asymptotics}\\
\multicolumn{2}{r||}{situation}&  \multicolumn{1}{c}{$\bar S_n$}& \multicolumn{1}{c}{[low;}&  \multicolumn{1}{c|}{up]}&
\multicolumn{1}{c|}{ex}&\multicolumn{1}{c}{$n^0$}&  \multicolumn{1}{c}{$n^{-1/2}$}& \multicolumn{1}{c|}{$n^{-1}$}\\
\hline
                               &${\rm \Ss id}           $& 1.492 & 1.451& 1.532 & 1.501 & 1.571 & 1.571 & 1.496\\
\raisebox{1.5ex}[-1.5ex]{$\rm Med$}&${\rm \Ss cont}     $& 1.786 & 1.736& 1.835 & 1.779 & 1.669 & 1.821 & 1.767\\
\hline
                               &${\rm \Ss id}           $& 1.250 & 1.216& 1.284 & 1.259 & 1.263 & 1.263 & 1.259\\
\raisebox{1.5ex}[-1.5ex]{$c=0.5$}&${\rm \Ss cont}       $& 1.545 & 1.502& 1.588 & 1.545 & 1.369 & 1.514 & 1.532\\
\hline
                               &${\rm \Ss id}           $& 1.092 & 1.062& 1.122 & 1.105 & 1.107 & 1.107 & 1.105\\
\raisebox{1.5ex}[-1.5ex]{$c=1.0$}&${\rm \Ss cont}       $& 1.433 & 1.393& 1.473 & 1.440 & 1.241 & 1.402 & 1.425\\
\hline
                               &${\rm \Ss id}           $& 0.991 & 0.963& 1.018 & 1.010 & 1.010 & 1.010 & 1.010\\
\raisebox{1.5ex}[-1.5ex]{$c=2.0$}&${\rm \Ss cont}       $& 1.611 & 1.566& 1.656 & 1.633 & 1.285 & 1.556 & 1.604\\
\hline
                               &${\rm \Ss id}           $& 1.035 & 1.006& 1.063 & 1.051 & 1.139 & 1.053 & 1.052\\
\raisebox{1.5ex}[-1.5ex]{$c=c_0=1.3393$}&${\rm \Ss cont}$& 1.438 & 1.398& 1.479 & 1.452 & 1.220 & 1.405 & 1.434
\end{tabular}
\end{center}
\end{table}
%
\begin{table}
\parbox[t]{12cm}
{\caption[{emp., num., and as.\ }${\rm relMSE}$ at $n=30$, $r=0.25$]%
{\label{tabel11}\parbox[t]{11cm}{{emp., num., and as.\ }${\rm relMSE}$ at $n=30$, $r=0.25$ relative to
$\Var[\bar X_n]$ for ${\rm \Ss id}$ and ${\rm MSE}(c_0(r))$ for ${\rm \Ss cont}$,  $c_0(r)=1.3393$}}%
}
\begin{center}
\begin{tabular}{rr||r|r|rr|}
\multicolumn{2}{l||}{estimator/}&\multicolumn{1}{c|}{simulation}&\multicolumn{1}{c|}{numeric}&\multicolumn{2}{c|}{asymptotics}\\
\multicolumn{2}{r||}{situation}&&\multicolumn{1}{c|}{ex}&\multicolumn{1}{c}{$n^0$}&  \multicolumn{1}{c|}{$n^{-1/2}$}\\
\hline
                               &${\rm \Ss id}           $& 1.435 & 1.427 & 1.379 & 1.379\\
\raisebox{1.5ex}[-1.5ex]{$\rm Med$}&${\rm \Ss cont}     $& 1.241 & 1.224 & 1.320 & 1.263\\
\hline
                               &${\rm \Ss id}           $& 1.202 & 1.197 & 1.199 & 1.198\\
\raisebox{1.5ex}[-1.5ex]{$c=0.5$}&${\rm \Ss cont}       $& 1.073 & 1.064 & 1.077 & 1.068\\
\hline
                               &${\rm \Ss id}           $& 1.051 & 1.051 & 1.051 & 1.051\\
\raisebox{1.5ex}[-1.5ex]{$c=1.0$}&${\rm \Ss cont}       $& 0.995 & 0.991 & 0.998 & 0.994\\
\hline
                               &${\rm \Ss id}           $& 0.953 & 0.960 & 0.959 & 0.960\\
\raisebox{1.5ex}[-1.5ex]{$c=2.0$}&${\rm \Ss cont}       $& 1.119 & 1.125 & 1.107 & 1.119
\end{tabular}
\end{center}
\end{table}
\subsubsection{Relative error compared to numerically exact risk}
A closer look onto the relative error of our higher order asymptotics w.r.t.\ the numerically exact risk ${\rm MSE}_n$ is provided by figure~\ref{fig1}.
A zoom-in for $n\ge 16$ is available on the web-page. 
Indeed for all investigated
radii $r=0.00$, $0.10$, $0.25$, $1.00$, the relative error of our asymptotic  formula w.r.t.\ the corresponding numeric figures is quickly
decreasing in absolute value in $n$; also, we notice that we have a certain oscillation between odd and even sample sizes for very
small $n$ which is explained by the fact that for even $n$ there may be ties. By Lemma~\ref{Kohllem}, the contribution of these ties
to the risk is however decaying exponentially in $n$. \\In table~\ref{tabel2n}, 
we have determined the smallest sample size $n_0$ such that for $n\ge n_0$ the relative error using first to third order asymptotics
for approximating ${\rm MSE}_n(\psi_c)$ to $c=0.7$ is smaller than $1\%$ resp.\ $5\%$ which shows that for
$r\leq 0.5$ we need no more than $25$ $(60)$ observations to stay within an error corridor of $5\%$ ($1\%$) in t-o asymptotics.
For f-o asymptotics, however  we need considerable sample sizes for reasonable approximations unless the radius is rather small.\\
The figures in this table are to be taken ``cum grano salis'' due to numerical inaccuracies in ${\rm MSE}_n$ w.r.t.\ the exact risk
of order $1{\Ss\rm E}-5$ which may result in a deviation from the ``real'' $n_0$ of $\pm 2$ for $n_0<200$.
\begin{figure}
  \begin{center}
    \includegraphics[width=13cm,height=9.cm]{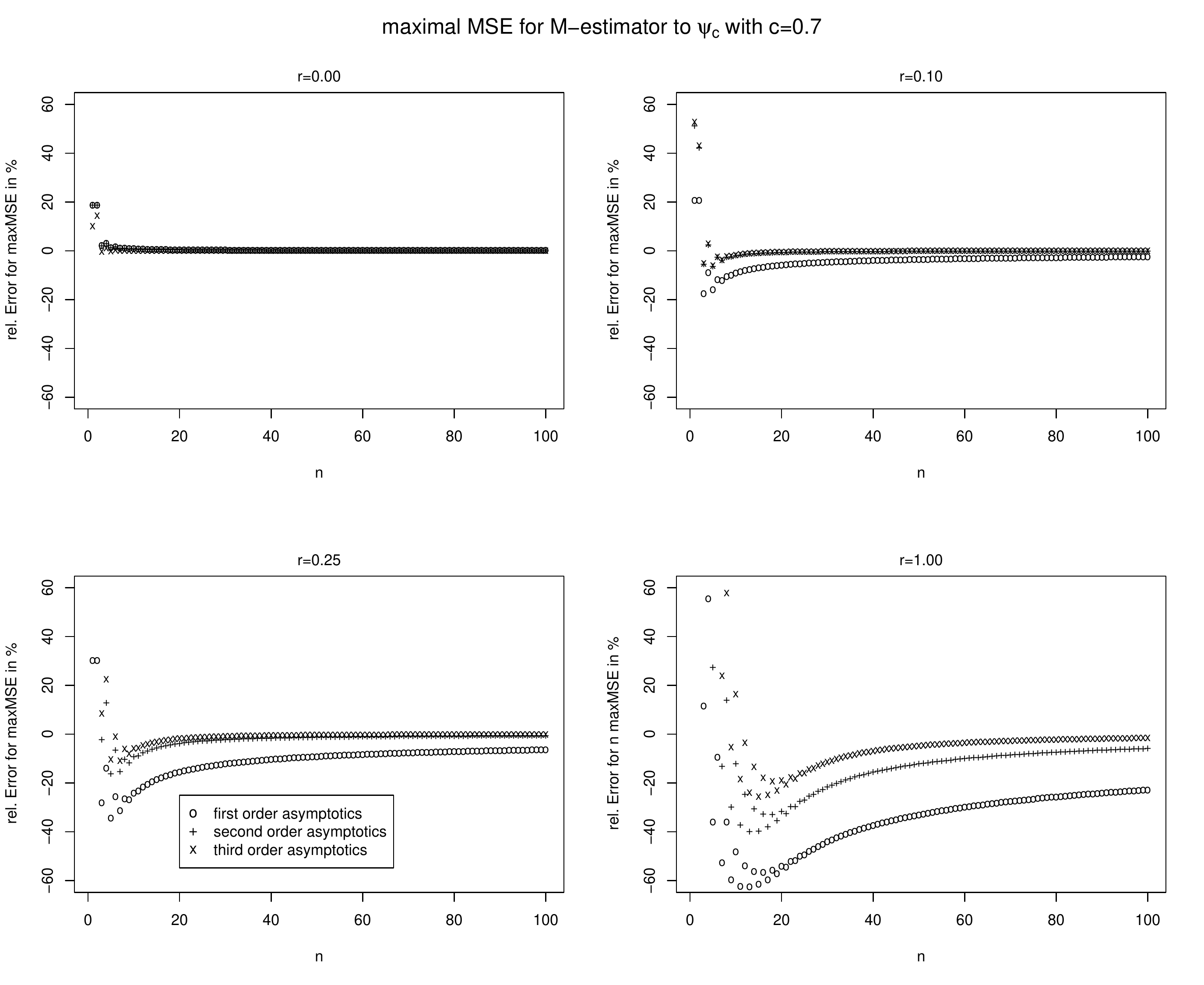}
    \caption{\label{fig1}{\rm\small The mapping $n\mapsto {\rm rel.error}({\rm MSE}_n(\psi_c))$
    for $c=0.7$ and $F={\cal N}(0,1)$.}}
  \end{center}
\end{figure}
%
\begin{table}
\parbox[t]{12cm}
{\caption[Minimal $n_0$ such that for $n\ge n_0$ the relative error is smaller than $1\%$ resp.\ $5\%$]%
{\label{tabel2n}\parbox[t]{11cm}{\rm\small Minimal $n_0$ such that for $n\ge n_0$ the relative error using first to third order asymptotics
for approximating ${\rm MSE}_n(\psi_c)$ for $c=0.7$ is smaller than $1\%$ resp.\ $5\%$}}%
}
\begin{center}
\renewcommand{\arraystretch}{1.1}
\begin{tabular}{l|l||r|r|r|r|r|}
${\rm rel.err}$& order&$r=0.00$      & $r=0.10$   & $r=0.25$    & $r=0.50$     & $r=1.00$     \\
\hline
1\%&1st order asy.    & $9$          & $>640^\ast$& $>3927^\ast$& $>14425^\ast$& $>49220^\ast$\\
   &2nd order asy.    & $9$          & $15$       & $60$        &   $196$      &  $>580^\ast$  \\
   &3rd order asy.    & $5$          & $15$       & $30$        &    $59$      &   $146$      \\
\hline
5\%&1st order asy.    & $3$          & $28$       & $162$       &   $>590^\ast$& $>1995^\ast$ \\
   &2nd order asy.    & $3$          & $6$        & $17$        &    $43$      &   $119$      \\
   &3rd order asy.    & $3$          & $6$        & $12$        &    $23$      &    $49$      \\
\end{tabular}
\renewcommand{\arraystretch}{1}
\end{center}
\begin{footnotesize}
  $\ast$: for $n>200$ computation of  ${\rm MSE}_n$ gets too expensive in time;
  instead we use  the the corresponding t-o figure.
  Assuming an error of t-o asymptotics of order
  $\LO(n^{-3/2})$, a corresponding regression onto the error term gives estimates for the regression coefficient to
  the term $n^{-3/2}$  of about  $-50$,  $-166$, $-534$, and $-1940$ for $r=0.1$, $0.25$, $0.5$, and $1.0$,
  so that the error (read from top to bottom and then left to right) incurred by this replacement is about $-3{\Ss\rm E}-3$, $-7{\Ss\rm E}-4$,
  $-3{\Ss\rm E}-4$, $-2{\Ss\rm E}-2$, $-2{\Ss\rm E}-2$, $-1.3{\Ss\rm E}-1$, and $-2{\Ss\rm E}-4$.
\end{footnotesize}
\end{table}
\section{Ramifications}\label{conseqsec}\req
\subsection{Ideal distributions with polynomially decaying tails} \label{fattail}
In order to be able to cover ideal distributions with polynomially decaying tails,
we sharpen the restriction
of the original neighborhood system $\tilde{\cal Q}_{n}(r,\ve_0)$ from
\eqref{modifdefi} to
\begin{equation}\label{modifdefi1}
Q_{n}={\cal L}\Big\{[(1-U_i)X_i^{\SSs \rm id}+U_i X_i^{\SSs \rm di}]_i\,\Big|\,\limsup_{n}\Tfrac{1}{n}\sum_{i=1}^n U_i\leq  \ve_0'\,\Big\}
\end{equation}
for some fixed $\ve_0'$ such that
\begin{equation} \label{s0def}
0\le \ve_0'< \ve_0
\end{equation} giving the new neighborhood system $\tilde {\cal Q}'_{n}(r;\ve_0')$.
Correspondingly, we will consider the asymptotics of
\begin{equation}
R_n'(S_n,r;\ve_0'):= \sup_{Q_n\in \tilde{\cal Q}'_n(r;\ve_0')} n\, \int |S_n-\theta_0|^2 \,dQ_n
\end{equation}
It is not surprising that all results up to this point on maximal risks are unaffected by this subtle modification.
But, 
we may replace assumption (Vb) by
\begin{itemize}
  \item[(Pd)]   There are some $T>0$ and $\eta>0$ such that
  \begin{equation}\label{abkling}
F(t)\geq 1-t^{-\eta},\quad \mbox{for } t>T,\qquad F(t)\leq (-t)^{-\eta}\quad\mbox{ for } t<-T
\end{equation}
\end{itemize}
\begin{Prop}\label{abklingP}
In the location model of Subsection~\ref{1dimlocset},
assume {(bmi)}, {(D)}, and {(C)} from section~\ref{Assum};
additionally assume that the central distribution $F$ satisfies \eqref{abkling}.
Then, on $\tilde {\cal Q}'_{n}(r;\ve_0')$, the assertions of Theorem~\ref{main} ---with any $k_2>2$--- continue to hold.
\end{Prop}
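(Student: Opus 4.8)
The plan is to leave the entire local analysis underlying Theorem~\ref{main} untouched and to redo only the one place where assumption (Vb) entered, namely the control of the contributions of the tails of the law of $S_n$ to the risk integral. First I would dispose of the claim that the passage from $\tilde{\cal Q}_n(r;\ve_0)$ to $\tilde{\cal Q}'_n(r;\ve_0')$ leaves all maximal-risk statements unchanged. By \eqref{s0def} we have $\ve_0'<\ve_0$, so $\tilde{\cal Q}'_n(r;\ve_0')$ is a sub-system; but the least favorable contaminations singled out in part~(b) of Theorem~\ref{main} place only $K_1(n)=\ulcorner k_1 r\sqrt n\urcorner$ points near $\pm\infty$, i.e.\ an asymptotic fraction $k_1 r/\sqrt n\to 0<\ve_0'$. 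Hence these contaminations stay admissible, the value \eqref{mainres} and the characterization \eqref{contbed1}/\eqref{contbed2} are attained on the smaller system as well, and the exponential negligibility of the conditioning event carries over verbatim from subsection~\ref{expneg}. This is the ``not surprising'' part.

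The substantive step is a uniform tail bound replacing (Vb). Since $\theta_0=0$, writing the risk through tail probabilities gives $R_n'(S_n,r;\ve_0')=\sup_{Q_n}2n\int_0^\infty t\,\Pr_{Q_n}(|S_n|>t)\,dt$, and it suffices to bound the part of this integral over $|t|>t^\ast$ for a suitable fixed threshold $t^\ast$. By monotonicity \eqref{mest2}, $\{S_n>t\}\subset\{\sum_i\psi_t(x_i)>0\}$. I would split the sum into its contaminated summands---at most $\ulcorner\ve_0'n\urcorner$ of them outside an exponentially negligible event (the Hoeffding estimate \eqref{expneg2} applies with $\ve_0'$ in place of $\ve_0$), bounded crudely by $\ve_0'n\,\hat b$ even when all contaminating mass sits at $+\infty$---and its ideal summands. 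As $\psi$ is increasing, $\psi_t(x)=\psi(x-t)>0$ only on a half-line $\{x>t+s_\psi\}$, where $s_\psi$ is the zero of $\psi$; positivity of the full sum therefore forces at least $(\ve_0-\ve_0')n-\Lo(n)$ ideal observations into $\{x>t+s_\psi\}$. Here the \emph{fixed} gap $\ve_0-\ve_0'>0$ supplied by \eqref{s0def} is decisive: in the original system the analogous margin shrank like $1/n$, which is precisely why (Vb) had been needed there.

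The probability of this event I would estimate by a Chernoff/Hoeffding bound \eqref{hoe2} for the binomial count $M\sim{\rm Bin}(n,p_t)$ with $p_t=\Pr(X^{\SSs\rm id}>t+s_\psi)$; by (Pd), $p_t\le(t+s_\psi)^{-\eta}$ for $t$ large, so for $t>t^\ast:=(\ve_0-\ve_0')^{-1/\eta}-s_\psi$ one is in the upper-tail regime $p_t<\ve_0-\ve_0'$ and the bound behaves like $p_t^{(\ve_0-\ve_0')n}\le(t+s_\psi)^{-\eta(\ve_0-\ve_0')n}$, uniformly in the contaminating measures $P^{\SSs \rm di}_{n,i}$. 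Because the exponent grows linearly in $n$, the weighted integral $2n\int_{t^\ast}^\infty t\,(t+s_\psi)^{-\eta(\ve_0-\ve_0')n}\,dt$ is finite once $\eta(\ve_0-\ve_0')n>2$ and then decays faster than any power of $n^{-1}$; the left tail $\{S_n<-t\}$ is handled symmetrically with the lower-tail half of \eqref{abkling}. This is exactly where only $k_2>2$ is needed, the $t^2$-weighting costing merely the integrability exponent $2$, whereas the stronger $k_2>\frac32+\frac3{2\delta}$ had been driven by (Vb). Since $\psi$ is bounded, all local moment data $\Ew\psi_t^l$, $l\le 4$, remain finite irrespective of the tails of $F$, so on the compact region $|t|\le t^\ast$ the central Edgeworth expansion of Theorem~\ref{main}---resting on (bmi), (D), (C) and Theorem~\ref{berry2}---produces the same coefficients $A_1,A_2$, and the proof is complete.

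The main obstacle I anticipate is making the large-deviation step genuinely uniform in the uncontrollable $P^{\SSs \rm di}_{n,i}$ and in $t\ge t^\ast$ simultaneously, and checking that $t^\ast$ can indeed be taken constant (depending only on $\ve_0-\ve_0'$, $\eta$ and the location of $s_\psi$) so that the whole risk-relevant mass of $S_n$ falls into the region $|t|\le t^\ast$ already covered by Theorem~\ref{main}. The delicate book-keeping is controlling the $\Lo(n^{-1})$ remainder under the interchange of $\sup_{Q_n}$ with the tail integration, which is where the strict separation $\ve_0'<\ve_0$ must be exploited in full.
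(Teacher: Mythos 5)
Your overall plan is in fact the one the paper itself follows in its proof (appendix, section~\ref{abklingp}): leave regions (I)--(III) of the tableau untouched (they rest only on (bmi), (D), (C), Hoeffding and the binomial lemma), note that the least favorable contaminations of Theorem~\ref{main}(b) have contamination fraction $\LO(1/\sqrt{n}\,)<\ve_0'$ and so remain admissible in $\tilde{\cal Q}'_n(r;\ve_0')$, and replace the single use of (Vb) --- the Chebyshev bound in the far-tail region (IV) --- by a counting argument in which a number of \emph{ideal} observations proportional to $n$ must lie in the far tail of $F$, an event controlled by (Pd) raised to a power linear in $n$. The gap is in your execution of the counting step, and it is fatal as stated. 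You threshold at the zero $s_\psi$ of $\psi$ and claim that $\Tsum_i\psi_t(x_i)>0$ together with $K\le\ve_0'n$ forces at least $(\ve_0-\ve_0')n-\Lo(n)$ ideal observations into $\{x>t+s_\psi\}$. This implication is false: ideal observations with $x_i\le t+s_\psi$ contribute only $\le 0$, so they exert no definite downward pull, and positivity of the sum yields \emph{no} lower bound on the count. Concretely, for continuous $\psi$ (e.g.\ any Hampel IC $\eta_c$), place all $n-K$ ideal observations at $t+s_\psi-\epsilon$ and all $K\approx \ve_0' n$ contaminations far right: then $\Tsum_i\psi_t(x_i)=K\hat b+(n-K)\psi(s_\psi-\epsilon)>0$ for $\epsilon$ small, yet no ideal observation lies in $\{x>t+s_\psi\}$, and your binomial/large-deviation bound never gets started. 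That such configurations are themselves improbable is precisely the information a correct proof must quantify, and thresholding at the zero of $\psi$ throws it away.

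The repair is the paper's $\ve_1$-device behind the inclusions \eqref{N-inkla}: fix $\ve_1\in(0,1)$ and count the ideal observations with $\psi_t(x_i)\le\check b(1-\ve_1)$, a \emph{strictly negative} level. Since every counted observation contributes at most $\check b(1-\ve_1)<0$ and every other summand at most $\hat b$, positivity of the sum forces $M\ge\alpha n$ ideal observations to satisfy $x_i>t+B_{\SSs -}$, where $B_{\SSs -}=\inf\{y:\psi(y)\ge(1-\ve_1)\check b\}$ is finite and $\alpha=\big[(1-\ve_0')(-\check b)(1-\ve_1)-\ve_0'\hat b\big]\big/\big[\hat b-\check b(1-\ve_1)\big]$. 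Observe that $\alpha>0$ only if $\ve_1$ is chosen small relative to the gap $\ve_0-\ve_0'$: this, rather than your heuristic about a margin ``shrinking like $1/n$'', is where \eqref{s0def} is genuinely consumed, and it explains why the argument collapses at $\ve_0'=\ve_0$ (the $\ve_1$-slack then absorbs the entire margin, while $\ve_1=0$ is unusable because $\psi$ may attain $\check b,\hat b$ only in the limit, i.e.\ $B_{\SSs\pm}=\pm\infty$). Two secondary repairs: the upper binomial tail carries a combinatorial factor, $\binom{m}{j}\le 2^n$, so the bound is of the form $n2^np_t^{\alpha n-1}$ rather than $p_t^{\alpha n}$, which forces your threshold $t^\ast$ (or the paper's cut of region (IV) at $|t|=Cn$) to be large enough that $(\sqrt{t}+B_{\SSs -})^{\eta\alpha}$ exceeds a fixed constant $>1$; and the left tail needs the mirror count at level $\hat b(1-\ve_1)$ using the other half of \eqref{abkling}. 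With these changes your integration step and the relaxation to $k_2>2$ (which stems from the (III)/(IV) boundary moving down to $Cn$, exactly the paper's ``$\delta=0$'' substitution in the tableau) go through as in the paper.
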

Property~\eqref{abkling} can be made plausible by the following proposition:
\begin{Prop}\label{slowdecay}
In the location model of Subsection~\ref{1dimlocset}, assume: For any $d>0$,
\begin{equation}\label{abkl}
\liminf_{t\to\infty} t^{d}(1-F(t))>0 \qquad\mbox{or}\qquad \liminf_{t\to\infty} t^{d}F(-t)>0
\end{equation}
Then for any sample size $n$, the MSE of the M-estimator $S_n$ to any IC $\psi$ according to (bmi) in the ideal model is infinite.
\end{Prop}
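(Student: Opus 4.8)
The plan is to reduce the claim to a purely deterministic comparison between the M-estimator and an extreme order statistic of the sample, and then to exploit the super-polynomially heavy tail through a one-sided tail integral. Fix $n$, and suppose first the right-tail alternative in \eqref{abkl} holds. Since $\psi\in\Psi$ is bounded with $\Ew[\psi]=0$ and is not identically zero (as $\Ew[\psi\Lambda_f]=1$), it takes both signs; together with monotonicity this yields a finite $y_1$ with $\psi(y)>0$ for all $y>y_1$. For an arbitrary realization $x_1,\dots,x_n$ I set $m=\min_{i\le n}x_i$ and $t=m-y_1-1$. Then $x_i-t\ge m-t=y_1+1>y_1$, hence $\psi(x_i-t)>0$ for every $i$ and $\sum_{i\le n}\psi_t(x_i)>0$. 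By the definition \eqref{mest1} of $S_n^{\ast}$ this forces $S_n^{\ast}\ge t$, and since $S_n\ge S_n^{\ast}$ I obtain the \emph{deterministic} lower bound $S_n\ge \min_{i\le n}X_i-C$ with $C=y_1+1$, valid for every sample.

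From this I get $S_n^2\ge(\min_{i\le n}X_i-C)_+^2$ pointwise (trivially when the right-hand side vanishes, and because $S_n\ge\min_i X_i-C\ge0$ otherwise). Writing $Y=\min_{i\le n}X_i$, whose survival function is $P(Y>y)=(1-F(y))^n$, the layer-cake formula gives
$$\Ew_{F^n}[S_n^2]\;\ge\;\Ew[(Y-C)_+^2]\;=\;\int_0^\infty 2s\,(1-F(C+s))^n\,ds,$$
so everything reduces to showing that $s\mapsto s\,(1-F(C+s))^n$ fails to be integrable at $+\infty$.

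Here the heavy-tail hypothesis enters with its exponent tuned to the fixed sample size: applying \eqref{abkl} with $d=1/n$ produces a constant $c_0>0$ and a threshold $T_0$ with $1-F(t)\ge c_0\,t^{-1/n}$ for $t>T_0$, hence $(1-F(C+s))^n\ge c_0^{\,n}(C+s)^{-1}$ for large $s$. Since $2s(C+s)^{-1}\to 2$, the integrand stays bounded away from $0$ and the integral diverges, giving $\Ew_{F^n}[S_n^2]=\infty$. If instead the left-tail alternative holds, I would run the mirror argument: monotonicity and $\inf\psi<0$ give $y_0$ with $\psi(y)<0$ for $y<y_0$, the analogous choice of $t$ in \eqref{mest1} yields $S_n\le S_n^{\ast\ast}\le\max_{i\le n}X_i+C'$, and the corresponding integral against $F(-t)^n$ diverges by the same computation. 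The one genuinely delicate point is the choice $d=1/n$: boundedness of $\psi$ makes $S_n$ insensitive to a few outliers, so a single heavy-tailed coordinate cannot inflate the variance; the blow-up can occur only through the joint event that \emph{all} $n$ coordinates are extreme, which costs the $n$-th power of the tail, and this is precisely why the tail must beat every polynomial and why $d$ must be taken as small as $1/n$.
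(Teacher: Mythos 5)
Your proof is correct, but it takes a genuinely different route from the paper's. You reduce everything to the deterministic sample-wise bound $S_n \ge \min_{i\le n} X_i - C$ (via monotonicity of $\psi$ and the definition \eqref{mest1} of $S_n^{\ast}$), so that divergence of the MSE follows from non-integrability of $s\mapsto s\,\bigl(1-F(C+s)\bigr)^n$, i.e.\ from \eqref{abkl} applied with $d=1/n$; the price of this reduction is that your favourable event requires \emph{all} $n$ coordinates to be extreme. The paper instead reuses the counting machinery set up for Proposition~\ref{abklingP}: writing $N_{+}(\sqrt{t}\,)$ for the number of observations with $\psi(x_i-\sqrt{t}\,)\ge \hat b(1-\ve_1)$, it uses the inclusion $\{N_{+}(\sqrt{t}\,)>n_{+}\}\subset\{\sum_i\psi(x_i-\sqrt{t}\,)>0\}\subset\{S_n\ge\sqrt{t}\,\}$, where $n_{+}=n\ve_{+}$ with $\ve_{+}<1$ determined by $\check b,\hat b,\ve_1$ (essentially the breakdown fraction), lower-bounds $\Pr\{N_{+}>n_{+}\}$ by the binomial probability of exactly $n'=\ulcorner n_{+}\urcorner+1$ extreme observations, $\binom{n}{n'}(1-\delta)\bar F(\sqrt{t}+B_{+})^{n'}$, and then applies \eqref{abkl} with $d=1/n'$. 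Under the stated hypothesis (which holds for every $d>0$) both arguments close, and yours is more elementary and self-contained. What the paper's argument buys is a quantitatively stronger conclusion: only roughly a breakdown-fraction $n'<n$ of extreme observations is needed, so the MSE is already infinite under a tail condition holding merely for some $d\le 1/n'$, whereas your bound needs $d\le 1/n$. This also corrects your closing heuristic: the blow-up does \emph{not} occur only through the joint event that all $n$ coordinates are extreme --- a fraction exceeding the breakdown point suffices, which is precisely the point of the paper's $N_{+}$-construction.
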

%
%
Conditions~\eqref{contbed1} resp.\ \eqref{contbed2} almost characterize the risk-maximizing contaminations:
\begin{Prop} \label{neccond}
Under the assumptions of Theorem~\ref{main}, let $\delta_0,c_0>0$. Assume that $\hat b=b$
and let $B_n:=\inf\{x\,\big|\, \psi(x)\ge b - c_0/\sqrt{n}\}$. Assume that, for $K=\sum_{i=1}^n U_i$ and $k>(1-\delta)r\sqrt{n}$,
\begin{equation} \label{condnec}
\Pr\Big(\sum_{i=1}^n U_i \Jc(X_i^{\rm \SSs di}\le B_n+v_0\sqrt{\log(n)/n})\geq 1\,\Big|K=k\Big)\ge p_0>0
\end{equation}
Then, eventually in $n$, for any such sequence of contaminations $Q^\flat_n\in\tilde{\cal Q}(r)$, the maximal MSE as in condition~\eqref{contbed2}
(i.e.\ with positive bias) in \eqref{mainres}  cannot be attained.
More precisely,
\begin{equation}
R_n(S_n,r)-n \Ew_{Q^\flat_n}S_n^2\ge
{2p_0v_0}(rc_0+b)/({n\sqrt{2\pi}})
\end{equation}
A corresponding relation holds for condition~\eqref{contbed1}.
\end{Prop}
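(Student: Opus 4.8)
The plan is to re-run the second-moment expansion behind Theorem~\ref{main}, but to retain the loss incurred by contaminating mass that is not pushed to the extreme right; the statement is essentially the converse of part~(b). First I would invoke Lemma~\ref{Kohllem} to replace $S_n$ by $S_n^\ast$ up to an exponentially negligible error, and write the renormalised second moment as the tail integral $n\,\Ew_{Q_n}S_n^2=n\int_0^\infty 2t\,[\,\Pr_{Q_n}(S_n>t)+\Pr_{Q_n}(S_n<-t)\,]\,dt$. By the monotone-$\psi$ identity \eqref{mest2}, each probability equals $\Pr(\sum_{i\le n}\psi_t(X_i)\gtrless 0)$. Conditioning on $\{K=k\}$ and on which coordinates are contaminated, I split $\sum_i\psi_t(X_i)=W_{n-k}(t)+\Sigma_{\rm cont}(t)$, where $W_{n-k}(t)=\sum_{\rm id}\psi_t(X_i^{\SSs \rm id})$ is a sum of $n-k$ i.i.d.\ terms and $\Sigma_{\rm cont}(t)=\sum_{\rm cont}\psi(X_i^{\SSs \rm di}-t)$ is the contaminating contribution, which satisfies $\Sigma_{\rm cont}(t)\le kb$ because $\hat b=b$.

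Next I would use Theorem~\ref{main}(b): the point-mass contamination $Q_n^\circ$ pushed right according to \eqref{contbed2} keeps $\Sigma_{\rm cont}^\circ(t)\equiv kb$ over the whole window of $t$ driving the positive-bias second moment, and attains $R_n(S_n,r)$ up to $\Lo(n^{-1})$. Subtracting, the shortfall becomes $R_n(S_n,r)-n\,\Ew_{Q_n^\flat}S_n^2=n\int_0^\infty 2t\,\Ew_{\rm cont}\big[\Pr\big(W_{n-k}(t)\in(-kb,-\Sigma_{\rm cont}(t)]\big)\big]\,dt+\Lo(n^{-1})$, which is manifestly non-negative since $\Sigma_{\rm cont}\le kb$; it remains to bound this integral below.

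For the lower bound I would localise on the event $E_n$ that at least one contaminated coordinate has $X_i^{\SSs \rm di}\le B_n+v_0\sqrt{\log(n)/n}$; by assumption \eqref{condnec} its conditional probability is $\ge p_0$. On $E_n$, monotonicity of $\psi$ forces the corresponding summand down to at most $b-c_0/\sqrt n$ on the relevant range of $t$, so $kb-\Sigma_{\rm cont}(t)\ge c_0/\sqrt n$ there; the margin $v_0\sqrt{\log(n)/n}$ is precisely what guarantees, via concentration of $S_n$ about its bias $t_0(k)=kb/(n-k)$ and a Bonferroni bound over the at most $K_1(n)$ contaminated coordinates, that this cap holds with probability tending to one across the integration window. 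Feeding $kb-\Sigma_{\rm cont}(t)\ge c_0/\sqrt n$ into the local Edgeworth expansion underlying Theorem~\ref{main} — whose leading density for $W_{n-k}(t)$ at the threshold is $(v_0\sqrt{2\pi(n-k)})^{-1}(1+\Lo(1))$ near $t_0(k)$ — turns the summand deficit into $\Pr(W_{n-k}(t)\in(-kb,-\Sigma_{\rm cont}(t)])\ge \frac{c_0/\sqrt n}{v_0\sqrt{2\pi(n-k)}}(1+\Lo(1))$. Integrating $2t$ against this density bump, averaging over $k$ (dominated by $k\sim r\sqrt n$, each weighted by $\Pr(E_n\,|\,K=k)\ge p_0$), and collecting the $\Ew[K]$-weight together with the variance contribution produces the stated $\frac{2p_0v_0(rc_0+b)}{n\sqrt{2\pi}}$. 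The negative-bias case \eqref{contbed1} follows symmetrically, interchanging $\hat b$ with $\check b$ and $\hat y_n$ with $\check y_n$.

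The hard part will be the uniformity of this last step. One must (i) confine $t$ to the narrow window where the local CLT is in force and argue that outside it $\Pr(S_n>t)$ is Edgeworth- and large-deviation-negligible, so it contributes only to the $\Lo(n^{-1})$ remainder; (ii) use the Hoeffding bound \eqref{expneg2} to discard both $k\ge\ve_0 n$ and the large-$k$ tail, leaving the dominant band $k\sim r\sqrt n$; and (iii) control the Edgeworth remainder at $\Lo(n^{-1})$ while the gap being extracted is itself of exact order $n^{-1}$ — this balance, rather than any single estimate, is the crux, since a crude remainder would absorb the very deficit one wants to exhibit. Condition~(C) (lattice-freeness) is essential here, as the failure of the density bump for lattice $\psi$ is exactly what the median discrepancy in \eqref{MALL} illustrates.
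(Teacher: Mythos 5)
Your overall plan---re-running the expansion machinery of Theorem~\ref{main}, localizing on the event of \eqref{condnec} with conditional probability at least $p_0$, and converting the resulting cap on the contaminating sum into an MSE deficit---is exactly the paper's strategy. But the execution of the key step has a genuine gap: you claim that on $E_n$ the cap $kb-\Sigma_{\rm cont}(t)\ge c_0/\sqrt{n}$ holds ``across the integration window'', i.e.\ near the bias $t_0(k)$, with probability tending to one, and you then evaluate the deficit with the central density $(v_0\sqrt{2\pi(n-k)}\,)^{-1}$. That claim is not justified by hypothesis \eqref{condnec}, and is in fact false in simple cases. An observation at $X_i^{\rm \SSs di}\le B_n+v_0\sqrt{\log(n)/n}$ is only forced below $b-c_0/\sqrt{n}$ after shifting by $t$ once $t\ge v_0\sqrt{\log(n)/n}$; for smaller $t$ it may still contribute the full value $b$. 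Concretely, for a Huber-type $\psi$ (constant $=b$ to the right of its corner $c$) and $P^{\rm \SSs di}_{n,i}$ a point mass at exactly $B_n+v_0\sqrt{\log(n)/n}$---which satisfies \eqref{condnec} with $p_0=1$---one has $\psi(X_i^{\rm \SSs di}-t)=b$ for every $t\le v_0\sqrt{\log(n)/n}-c_0/(A_c\sqrt{n}\,)$, hence no deficit whatsoever on the central part of the window where your density bump is evaluated. The deficit guaranteed by \eqref{condnec} lives only in the region $t\ge v_0^2\log(n)/n$, i.e.\ $s\gtrsim\sqrt{\log n}$ in the standardized variable: the Gaussian tail, not the center.

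This is precisely how the paper proceeds: it splits the $s$-integral of the main proof at $\sqrt{\log n}$, uses the capped value of $t^\natural$ only on $[\sqrt{\log n},\,bl_n/v_0)$ and on the event from \eqref{condnec} (and the full bound $k^\natural b$ elsewhere), and then exploits $\int_{a_1}^{a_2}s\varphi(s)\,ds=\varphi(a_1)-\varphi(a_2)$ with $a_1=\sqrt{\log n}$, so that $\varphi(\sqrt{\log n}\,)=(2\pi n)^{-1/2}$ multiplied by the order-$n^{-1/2}$ cap deficit $D_n(\tilde k)$ yields the order-$n^{-1}$ gap; averaging $D_n(\tilde K)$ over $K$ via Lemma~\ref{binlem} produces the factor $(rc_0+b)$. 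Both the $\sqrt{2\pi}$ and the $v_0$ in the final constant arise from this tail evaluation (the $v_0$ from the cross term $2v_0sD_n(\tilde k)$ in the expansion of $h_n$), so they cannot be recovered from a central-density computation: pushed through where it is actually valid, your sketch yields a different expression, and the stated constant is asserted rather than derived. Your worry (iii) about the remainder, by contrast, is not the obstacle: $\Lo(\cdot)$ is little-o throughout, so the $\Lo(n^{-1})$ remainder is strictly dominated by the extracted term of exact order $n^{-1}$, both in the paper's proof and in any corrected version of yours.
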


\subsection{Convergence of variance and bias separately}
The technique used to derive  Theorem~\ref{main} also applies if we are interested in
variance and bias separately; we get
\begin{Prop}\label{biasvar}
  Under Assumptions~{(bmi)} to {(C)} and for  sample size $n$,
  an M-estimator $S_n$ for scores-function $\psi$
  under a  measure $Q_n^0\in\tilde {\cal Q}_n(r;\ve_0)$ according to \eqref{contbed1} resp.\ \eqref{contbed2} 
  admits the following expansions
\begin{eqnarray}
\sqrt{n}\,\Big|{\rm Bias}(S_n,Q_n^0)\,\Big|&=&
\Big|\,
rb+\Tfrac{1}{\sqrt{n}\,}\,B_{1,0}+\Tfrac{r^2}{\sqrt{n}\,}\,B_{1,1}+\Tfrac{r}{n}\,B_2\,\Big|
+\Lo(n^{-1})\label{bias}\\
n\,{\rm Bias}^2(S_n,Q_n^0)&=&
r^2b^2+\Tfrac{r}{\sqrt{n}\,}\,C_1+\Tfrac{1}{n}\,C_2
+\Lo(n^{-1})\label{bias2}\\
n\,{\rm Var}(S_n,Q_n^0)&=&v_0^2+\Tfrac{r}{\sqrt{n\,}}\,D_1+\Tfrac{1}{n}\,D_2
+\Lo(n^{-1})\label{var}
\end{eqnarray}
with
\begin{small}
\begin{eqnarray}
B_{1,0}&=&(\Tfrac{1}{2}l_2+\tilde v_1)v_0^2,\qquad
B_{1,1}\;=\;\;b( 1\pm\Tfrac{1}{2}l_2b)\\
B_2&=&\Big[(\Tfrac{1}{2}{l_2^2}+\Tfrac{1}{6}l_3 ){b}^{3}+ b \pm l_2{b}^{2}\Big]{r}^{2}+b(1\pm \Tfrac{1}{2}l_2b)+\nonumber\\
&&\quad+\Big[ (\Tfrac{1}{2}l_3+\Tfrac{3}{2}\,{l_2^2}+\tilde v_2+{\tilde v_1}^{2}+3\,\tilde v_1\,l_2)b\pm
   \Tfrac{1}{2}l_2\pm\tilde v_1\Big]{v_0}^{2}
\\[0.5ex]
C_1&=&b^2r^2(\pm l_2b+2)\pm b(l_2+2\tilde v_1)v_0^2\\
C_2&=&  ({\tilde v_1}\,l_2+\Tfrac{1}{4}\,{l_2^2}+{\tilde v_1}^{2}){v_0}^{4}+
\Big[3\,{b}^{2}\pm 3\,l_2\,{b}^{3}+
   (\Tfrac{5}{4}\,{l_2^2}+\Tfrac{1}{3}\,l_3){b}^{4}\Big]{r}^{4}+\nonumber \\
   &&+
   \Big[(\Tfrac{7}{2}\,{l_2^2}+l_3+2\,\tilde v_2+2\,{\tilde v_1}^{2}+
   7\,\tilde v_1\,l_2){b}^{2}\,{v_0}^{2}\pm (2\,l_2+4\,\tilde v_1)\,b\,v_0^2+  2{b}^{2}\pm
    l_2\,{b}^{3}\Big]\,{r}^{2}\\
D_1&=&\Big [\pm 2(\,l_2+\,\tilde v_1)b+1\Big]{v_0}^{2}+{b}^{2}\\
D_2&=&
(l_3+\Tfrac{7}{2}\,{l_2^2}+11\,\tilde v_1\,l_2+8\,{\tilde v_1}^{2}+
3\,\tilde v_2){v_0}^{4}+\Big (\Tfrac{2}{3}\,{\rho_1}+
(l_2+2\,\tilde v_1 ){\rho_0}\Big ){v_0}^{3}+
\nonumber \\
&&\Big[\Big((l_3+{\tilde v_1}^{2}+
\tilde v_2+5\,\tilde v_1\,l_2+4\,{l_2^2} ){b}^{2}\pm 4 (l_2+\tilde v_1)b+1
\Big ){v_0}^{2}\pm 2\,l_2\,{b}^{3}+3{b}^{2}\Big]{r}^{2}
\end{eqnarray}
\end{small}

where we are in the $-\,[+]$-case according to whether \eqref{contbed1} or \eqref{contbed2} applies.
\end{Prop}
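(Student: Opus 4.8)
The plan is to derive all three expansions from a single genuinely new computation — that of the first moment $\Ew_{Q_n^0}[S_n]$ under the fixed, risk-maximizing measure $Q_n^0$ — and to read off the rest almost for free. Since $\theta_0=0$, the bias equals $\Ew_{Q_n^0}[S_n]$, so the first-moment expansion is \eqref{bias} and its square is \eqref{bias2}. Because $R_n(S_n,r,\ve_0)=n\,\Ew_{Q_n^0}[S_n^2]=n\,{\rm Var}+n\,{\rm Bias}^2$ under this same $Q_n^0$, the variance expansion \eqref{var} then follows by subtracting \eqref{bias2} from the MSE expansion \eqref{mainres} of Theorem~\ref{main}. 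Thus no separate variance computation is needed; everything reduces to expanding the first moment with exactly the machinery already in place for the main theorem.

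First I would pass from $S_n$ to the sum $\sum_i\psi_t(X_i)$: by \eqref{mest2} and Lemma~\ref{Kohllem}, $\Pr_{Q_n^0}(S_n<t)=\Pr_{Q_n^0}(\sum_i\psi_t(X_i)\le 0)+\LO(\exp(-\gamma n))$ at continuity points $t$, and the first moment is recovered from the tail formula $\Ew_{Q_n^0}[S_n]=\int_0^\infty\Pr_{Q_n^0}(S_n>t)\,dt-\int_{-\infty}^0\Pr_{Q_n^0}(S_n<t)\,dt$. Conditioning on the number $K=\sum_i U_i$ of contaminated coordinates splits the sum into $n-K$ i.i.d.\ ideal summands $\psi_t(X_i^{\SSs \rm id})$ and $K$ contaminated ones; under \eqref{contbed1} [\eqref{contbed2}] the contaminating mass sits so far left [right] of the clipping region $\check y_n$ [$\hat y_n$] that, up to $\Lo(n^{-1})$ and for the relevant range $t=\LO(n^{-1/2})$, each contaminated summand contributes the deterministic extreme value $\check b$ [$\hat b$]. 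To the remaining sum of ideal summands I would apply the Edgeworth-type expansion Theorem~\ref{berry2}, whose correction terms are governed precisely by the cumulant functions $V(t)^2$, $\rho(t)$ and $\kappa(t)$ from \eqref{LVRK2}.

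Next I would localize: by $\sqrt n$-consistency the integrand concentrates on $t=s/\sqrt n$, so after this substitution I would Taylor-expand $L$, $V$, $\rho$ and $\kappa$ about $t=0$ via the expansions (D) in \eqref{VD}--\eqref{KD}, evaluate the resulting Gaussian and Hermite-polynomial integrals in $s$ (against $\Phi$ and $\varphi$), and collect powers of $n^{-1/2}$. This produces $\sqrt n\,\Ew_{Q_n^0}[S_n]=\pm rb+\Tfrac1{\sqrt n}(B_{1,0}+r^2B_{1,1})+\Tfrac rn B_2+\Lo(n^{-1})$, the sign recording the left/right orientation; taking absolute values gives \eqref{bias}, and squaring reproduces \eqref{bias2}, the $n^{-1/2}$-coefficient $rC_1$ arising as the cross-term $2rb(B_{1,0}+r^2B_{1,1})$. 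An internal consistency check I would exploit throughout is that $C_1+D_1=A_1$ and $C_2+D_2=A_2$ must reproduce \eqref{A1defr}--\eqref{A2defr}; for instance the $v_0^2$-parts combine as $\pm bv_0^2\big[(l_2+2\tilde v_1)+2(l_2+\tilde v_1)\big]=\pm bv_0^2(3l_2+4\tilde v_1)$, matching the $A_1$-coefficient.

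I expect the main obstacle to be the order-$1/n$ bookkeeping in the first moment. Because the term $\Tfrac rn B_2$ sits inside $\sqrt n\,|{\rm Bias}|$ in \eqref{bias}, the bias must be controlled to $\Lo(n^{-3/2})$, i.e.\ to the same depth as the MSE; this forces one to retain the skewness coefficient $\rho_1$ and all the Taylor data $l_2,l_3,\tilde v_1,\tilde v_2$ through the $s$-integration, and to track how the $\pm$ orientation propagates through the cross-terms when squaring — precisely the step where a sign error stays invisible in the MSE (where only $\pm$-symmetric combinations survive) yet is fatal for the separated bias and variance. As in Theorem~\ref{main}, the resulting symbolic manipulation is best delegated to a computer-algebra computation, with the identities $C_i+D_i=A_i$ serving as the end-to-end validation.
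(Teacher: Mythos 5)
Your proposal is correct and is essentially the paper's own proof: the paper, too, reruns the argument of Theorem~\ref{main} for the first moment, replacing the integrand $u_1(s)^2\,\varphi(s)\,g_n(s)$ in \eqref{intds} by $u_1(s)\,\varphi(s)\,g_n(s)$ (with the symbolic bookkeeping delegated to a \texttt{MAPLE} procedure), so that the bias expansions \eqref{bias}--\eqref{bias2} and then ${\rm Var}={\rm MSE}-{\rm Bias}^2$ via \eqref{mainres} follow exactly as you describe. Your identity $C_i+D_i=A_i$ is that same subtraction read backwards and serves, as you note, as the natural validation of the computer-algebra output.
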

For a proof to this proposition, we may proceed exactly as in the proof of Theorem~\ref{main};
only in \eqref{intds}, we keep the integration domain and replace the
integrand $u_1(s)^2 \,\varphi(s)\,g_n(s)$ by $ u_1(s) \,\varphi(s)\,g_n(s)$;
we do not spell this out here. In {\tt MAPLE} the expressions are obtained by means
of our procedure {\tt asESi}.
%

\appendix
\section{Proofs}\label{proofsec}\req
%
\subsection{Proof to Lemma~\ref{clemn}}\label{clemnsec}
Let $G_t$ be the law of $\psi_t(X^{\SSs\rm id})$.
By assumption, the Lebesgue decomposition yields $dG_0= a g\,d\lambda + (1-a)\,d\tilde G$
for $a\in(0,1]$, $g$ some probability density and $\tilde G \perp \lambda$. The support of
$g$ contains an open interval $(c_1,c_2)$ and $G_0(c_2)>G_0(c_1)$.
On $(c_1,c_2)$, $\psi$ is strictly isotone and continuous,
so that with $d_i=\psi^{-1}(c_i)$
\begin{equation}
P(\psi_t(X^{\SSs\rm id})\in(c_1,c_2))=P(d_1+t < X^{\SSs\rm id}< t+d_2)
=\int^{d_2+t}_{d_1+t}\,dF
\end{equation}
But \begin{equation}
\int^{d_2+t}_{d_1+t}\,dF=G_0(c_2)-G_0(c_1)+\Lo(t^0)
\end{equation}
so that for $t$ small enough, the absolute continuous part of $G_t$ is uniformly bounded away from $0$ and hence
by the Lebesgue Lemma our condition~\eqref{(C)gl} holds.
\hfill\qed
\begin{small}
\subsection{Proof to Proposition~\ref{normalspclem}}\label{pnormalspclem}
To get $\Ew[\hat \eta_c\Lambda_f]=1$, the Lagrange multiplier $A_c$ must be determined by
 $ A_c^{-1}=2\Phi(c)-1 \nonumber$. 
It holds that $b=A_cc$. For $c\to \infty$ we obtain the classically optimal IC, and $c\to 0$, using l'Hospital  yields the IC of the
sample median. As to $L(t)$,  we obtain
\begin{equation}
  L_c(t)=A[c-(c+t)\Phi(t+c)+(t-c)\Phi(t-c)+\varphi(t-c)-\varphi(t+c)],\quad 
  L_{\infty}(t)=-t,\quad L_0(t)=\sqrt{\Tfrac{\pi}{2}}\,(1-2\Phi(t))
\end{equation}
all arbitrarily often differentiable functions, so the $l_i$-part of {(D)} holds as stated in the proposition. 
For $V(t)$ introduce
\begin{equation}
S(t):=\Ew[\psi(x-t)^2],\qquad W(t):=V(t)^2\nonumber
\end{equation}
Then, suppressing the argument $t$,
$W=S-L^2,\qquad W'=S'-2LL',\qquad W''=S''-2L'^2-2LL'' \nonumber
$
and with
$W_0=W(0)$, 
$\tilde W_1(0)=W'(0)/W_0$, 
$\tilde W_2(0)=W''(0)/W_0$,  
we get
\begin{equation}
W_0=S(0),\qquad \tilde W_1 = S'(0)/S(0),\qquad\tilde W_2 = (S''(0)-2)/S(0)\nonumber
\end{equation}
and hence
$V(t)
=\sqrt{W_0}(1+\Tfrac{\tilde W_1\,t}{2}+\Tfrac{(2\tilde W_2-\tilde W_1^2)\,t^2}{8})+\LO(t^{2+\delta})\nonumber$
so that
\begin{equation}
v_0=\sqrt{S(0)},\qquad \tilde v_1=\frac{S'(0)}{2S(0)},\qquad \tilde v_2=\frac{2S''(0)-4-S'(0)^2/S(0)}{4S(0)}\nonumber
\end{equation}
In our case we have for $0<c<\infty$
\begin{eqnarray}
S(t)&=&A^2_c\Big[c^2\big(1-\Phi(t+c)+\Phi(t-c)\,\big)+(1+t^2)\big(\Phi(t+c)-\Phi(t-c)\,\big)
+(t-c)\varphi(t+c)-(t+c)\varphi(t-c)\,\Big]\nonumber
\end{eqnarray}
and
$  S(t)=1+t^2\quad\mbox{for }c=\infty, \qquad S(t)=\frac{\pi}{2}=b^2 \quad\mbox{for }c=0\nonumber$,
so \eqref{VD} holds with\newline
\centerline{
\begin{tabular}{r|c|c|c}
&$0<c<\infty$&$c=0$&$c=\infty$\\
\hline
$S(0)$&$2 b^2(1-\Phi(c))+A_c(1-2b\varphi(c))$ &$1$&$\Tfrac{\pi}{2}$\\
$S'(0)$&$0$&$0$&$0$\\
$S''(0)$&$2A_c^2(2\Phi(c)-1-2c\varphi(c))$ &$2$&$0$\\[1ex]
\end{tabular}\vspace{1ex}}\newline
and the assertions as to $v_0$, $\tilde v_1$, $\tilde v_2$ 
follow.
As to (Vb), for $|t|\to \infty$,
we get with Mill's ratio for any $\delta>0$
$$
\Big|\,b-|L(t)|\,\Big|
=A_c \Big|\,(c+t)\bar \Phi(t+c)-(t-c)\bar \Phi(t-c)+\varphi(t-c)-\varphi(t+c)\,\Big|=\nonumber\\
=\Lo(\exp(-\frac{t^2}{2+\delta}))
$$
Again with Mill's ratio,
$|S(t)-b^2|\le
A_c^2\Big[2(t^2+1)\bar\Phi(|t|-c)+2(|t|+c)\varphi(|t|-c)\,\Big]=
\Lo(\exp(-\frac{t^2}{2+\delta}))
$ 
and hence
$V^2(t)=S(t)-L(t)^2=\Lo(\exp(-\frac{t^2}{2+\delta})) 
$. 
For $c =0$ we get
$\Big|\,b-|L(t)|\,\Big|=\sqrt{2\,\pi}\, \bar \Phi(t)
=\Lo(\exp(-t^2/2))$ and 
\begin{equation}
V^2(t)=b^2-(b+\Lo(\exp(-t^2/2)))^2=\Lo(\exp(-t^2/2))\nonumber
\end{equation}
For $\rho(t)$ and $\kappa(t)$, we introduce
$  M(t):=\Ew[\psi(X-t)^3]$, $N(t):=\Ew[\psi(X-t)^4]\nonumber $.
Then, again suppressing the argument $t$
\begin{equation}
\rho=V^{-3}[M-3LS+2L^3],\qquad\kappa=V^{-4}[N-4ML+6SL^2-3L^4]-3\nonumber
\end{equation}
and hence
$
\rho_0=v_0^{-3}M(0)$, 
$\kappa_0=V^{-4}N(0)-3$. 
For $\rho_1$ we note
\begin{equation}
\rho'=V^{-3}\Big(-3[M-3LS+2L^3] \,V'/V+(M'-3L'S-3LS'+3L'L^2)\Big)\nonumber
\end{equation}
so that
$\rho_1=v_0^{-3}(-3M(0)\tilde v_1+M'(0)+3S(0))$. 
In our case, for $c=\infty$,
$
M(t)=-3t-t^3$, $M'(t)=-3-3t^2$, 
$N(t)=t^4+6t^2+3\nonumber
$
and for $c=0$,
$  M(t)=(\sqrt{\frac{\pi}{2}}\,)^3(1-2\Phi(t))$, 
  $M'(t)=-2\,(\sqrt{\frac{\pi}{2}}\,)^3\varphi(t)$, 
  $N(t)=\frac{\pi^2}{4}$, while
for $0<c<\infty$
\begin{eqnarray}
  M(t)&=&A_c^3\Big[c^3-\Phi(t+c)(c^3+t^3+3t)-\Phi(t-c)(c^3-t^3-3t)+\nonumber\\
  &&\qquad+(t^2+tc+2+c^2)\varphi(t-c)-(t^2-tc+c^2+2)\varphi(t+c)\Big]\nonumber\\
  M'(t)&=&A_c^3\Big[3\big(\Phi(t-c)-\Phi(t+c)\big)(t^2+1)-
  3(t-c)\varphi(t+c)+3(t+c)\varphi(t-c)\Big]\nonumber\\
  N(t)&=&A_c^4\Big[c^4+\big(\Phi(t+c)-\Phi(t-c)\big)(t^4+6t^2+3-c^4)+
  (t^3-t^2c+tc^2-c^3+5t-3c)\varphi(t+c)-\nonumber\\
  &&\qquad-(t^3+t^2c+tc^2+c^3+5t+3c)\varphi(t-c)\Big]\nonumber
\end{eqnarray}
This gives the assertion as to $\rho_0$, $\rho_1$ and $\kappa_0$, and hence \eqref{KD} holds.\\
For $c>0$, $\Pr(|\eta_c|<b)>0$ and $\eta_c$ is continuous.
But, on $\{|\eta_c|<b\}$, ${\cal L}(\eta_c)$ is a.c.\ and hence Lemma~\ref{clemn} entails (C).
\hfill\qed
\end{small}
\subsection{Proof of Theorem~\ref{main}}\label{prmain}
We plug in $(X_i)\sim Q_n$ for some $Q_n \in \tilde{\cal Q}_n(r)$
into the defining relations for M-estimators of \eqref{mest1}.
\paragraph{\small Outline of the proof}
We begin with conditioning w.r.t.\ the number $K=\sum_i U_i=k$ of contaminated observations;
next for fixed $t\in\R$, we consider $\tilde T_{n,k,t}(t)=\sum_{i:U_i=1}\psi(X_i-t)$ and
condition the probability w.r.t.\ its realization $\tilde t_{n,k,t}$. In the sequel we suppress the indices of $\tilde t_{n,k,t}$.
Denote this event by
\begin{equation}
D_{k,\tilde t}:= \{ K=k,\tilde T_{n,k}(\sqrt{t}\,)=\tilde t\,\}
\end{equation}
Thus
\begin{equation}
n\,{\rm MSE}(S_n,Q_n\,|\,D_{k,\tilde t}\,)=\int_0^{\infty}\!\!\!\!\Pr(S_n^2\geq t\,|\,D_{k,\tilde t}\,)\,dt=
\int_0^{\infty}\!\!\!\!\Pr(S_n\geq \sqrt{t}\,|\,D_{k,\tilde t}\,)\,dt+\int_0^{\infty}\!\!\!\!\Pr(S_n\leq -\sqrt{t}\,|\,D_{k,\tilde t}\,)\,dt\label{738}
\end{equation}
For the sequel, we define
$\bar n:= n-k$, $s_{n,k}:=s_{n,k}(t)=\frac{-\tilde t-\bar n L(t)}{\sqrt{\bar n}\,\,V(t)}$.
To derive the result, we then partition the integrand according to the following tableau 
where $C'>0$ is some constant and $\delta$ is
the exponent from assumption~(Vb): \label{tabelAUF}
\begin{center}
\begin{tabular}{c||c|c|c}
&$K< k_1r\sqrt{n}$ &  $k_1r\sqrt{n}\leq K< \ve_0 n $ & $K\ge  \ve_0 n$\\[0.5ex]
\hline\hline
$|t|\leq k_2b^2{\log(n)/n}$& (I) &&\\[0.8ex]
\cline{1-2}
$k_2 b^2{\log(n)/n} < |t|\leq C n^{1+3/\delta} $& (III)& \raisebox{1.5ex}[-1.5ex]{(II)}& {\footnotesize excluded} \\[0.8ex]
\cline{1-3}
$|t|> C n^{1+3/\delta} $& \multicolumn{2}{c|}{(IV)}& \\[0.5ex]
\hline
\end{tabular}
\end{center}
\noindent
At this point we also summarize the constants that will be used throughout this section.
\begin{center}
\begin{tabular}{c||c|c}
constant &$k_1$&$k_2$\\
\hline value& $>1$&$>2\vee (\frac{3}{2}+\frac{3}{2\delta})$
\end{tabular}
\end{center}
For all cases except for (I), we will show that they contribute only terms of order $\Lo(n^{-1})$ to $n\,{\rm MSE}({S}_n)$
and hence can be neglected. Applying Taylor expansions at large, we derive an expression in which it becomes clear,
that independently from $t$ and eventually in $n$, the maximal MSE is attained for $\tilde t_{n,k}$ either
$kb$ or identically $-kb$ for all $t$ in (I) --- or equivalently all contaminated observations
are either smaller than $\check y_n-k_2b^2\log(n)/n$ or larger than $\hat y_n+k_2b^2\log(n)/n$.
Integrating out first $t$ and then $k$ we obtain the result \eqref{mainres} stated in Theorem~\ref{main}.
\paragraph{\small Conditioning w.r.t.\ the number of contaminated observations}
As announced, for the moment we condition w.r.t. the number $K=\sum_i U_i=k$ of contaminated observations in the sample.
Denote the ideally distributed part as $T_{n,k}(t):=\sum_{i:U_i=0} \psi_t(X_i)$. Then we get
\begin{eqnarray}
\Pr\{S_n\leq t\,\Big |\,K=k\,\}+R_n^{(0)}(k)=\Pr(T_{n,k}(t)<-\tilde T_{n,k}(t))=
\Pr(\frac{T_{n,k}(t)-\bar n L(t)}{\sqrt{\bar n} V(t)}
<-\frac{\tilde T_{n,k}(t)-\bar n L(t)}{\sqrt{\bar n} V(t)}\,)
\end{eqnarray}
where $R_n^{(0)}(k)\not=0$ can only happen for mass points of $\Lw(T_{n,k}(t)+\tilde T_{n,k}(t))$.
\paragraph{\small Conditioning w.r.t.\ the actual contamination}
Next, we condition the probability w.r.t.\ the actual value
of the contamination $\tilde T_{n,k}=\tilde t$. This gives
\begin{equation}
\Pr\{S_n\leq t\, |D_{k,\tilde t}\}+\tilde R_n^{(0)}(k,\tilde t)=
\Pr\Big(\frac{T_{n,k}(t)-\bar n L(t)}{\sqrt{\bar n} \,V(t)}<%
s_{n,k}(t)\Big)\label{740}
\end{equation}
where again $\tilde R_n^{(0)}(k,\tilde t)\not=0$ can only happen for mass points  of $\Lw(T_{n,k}(t))$.
%
%
\paragraph{\small Negligibility of case (IV)}
Without loss, assume that $b=\hat b$. By monotonicity and boundedness in assumption (bmi), to given $0<\eta<-\check b$ there is a $t_0>0$
such that for $t>t_0$, $$\check b<L(t)=\Ew[\psi(X^{\rm \SSs id}-t)]\le \check b +\eta$$ Let $t_1>t_0$, $\delta>0$ and $C'>0$ so that for
$t>t_1$, by (Vb), $|V(t)|\leq C' t^{-1-\delta}$. Then we apply the Chebyshev inequality to obtain for $t>t_1^2$
\begin{eqnarray}
&&\Pr\{S_n> \sqrt{t} \,\Big|\,D_{k,\tilde t}\}
\stackrel{}{\leq}\Pr\Big(T_{n,k}(\sqrt{t}\,)-\bar n L(\sqrt{t}\,)\ge -\tilde t-\bar n L(\sqrt{t}\,)\Big)
\stackrel{\mbox{\tiny Cheb.}}{\leq}\frac{\bar nV^2(\sqrt{t}\,)}{(\tilde t+\bar n L(\sqrt{t}\,))^2}
\stackrel{{\rm\SSs (Vb)}}{\leq}\frac{\bar n C' t^{-(1+\delta)}}{(\tilde t+\bar n L(\sqrt{t}\,))^2}\leq\nonumber\\
&\le&\frac{n C' t^{-(1+\delta)}}{(\tilde t+\bar n \check b +\eta
)^2}
\stackrel{\SSs \tilde t\leq k\hat b}{\leq}
\frac{n C' t^{-(1+\delta)}}{[k \hat b +\bar n\check b+\eta]^2}
=\frac{n C' t^{-(1+\delta)}}{[k (\hat b-\check b) +n\check b+\eta]^2}
\stackrel{{\SSs k\leq \ve_0 n}}{\leq}\frac{n C' t^{-(1+\delta)}}{(\check b-\eta)^2}
\end{eqnarray}
and correspondingly (with $b=-\check b)$ for $\Pr\{S_n\leq -\sqrt{t} \,\big|\,D_{k,\tilde t}\}$;
but
\begin{equation}
\frac{C' n^2 }{ (b-\eta)^2} \int_{Cn^{1+3/\delta}}^{\infty} t^{-(1+\delta)}\,dt= \frac{C'  C^{-\delta} n^{-1-\delta}}{\delta (\check b-\eta)^2}=\Lo(n^{-1})
\end{equation}
\paragraph{\small Negligibility of case (II)}
\begin{Lem}\label{binlem}
Let
\begin{equation}
\kappa:=k_1\log k_1+1-k_1
\end{equation}
Then it holds that
\begin{equation}\label{alphahoeff}
\Pr({\rm Bin}(n,r/\sqrt{n\,}\,)>k_1 r\sqrt{n})\leq\exp\big(-\kappa\, r\sqrt{n}+\Lo(\sqrt{n}\,)\big)
\end{equation}
\end{Lem}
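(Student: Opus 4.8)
The plan is to treat $\Pr(\mathrm{Bin}(n,r/\sqrt n)>k_1 r\sqrt n)$ as an upper-tail deviation at $k_1$ times the mean and to bound it by the exponential Markov (Chernoff) inequality, exploiting that $np=r\sqrt n$ is exactly the mean. Writing $X\sim\mathrm{Bin}(n,p)$ with $p=r/\sqrt n$, the moment generating function factorises, so for any $\lambda>0$
\begin{equation}
\Pr(X>k_1 np)\le e^{-\lambda k_1 np}\,\Ew[e^{\lambda X}]=e^{-\lambda k_1 np}\,(1-p+pe^{\lambda})^n.
\end{equation}
First I would apply $\log(1+x)\le x$ to $(1+p(e^\lambda-1))^n$, obtaining $(1-p+pe^\lambda)^n\le\exp(np(e^\lambda-1))$ and thereby reducing the bound to $\exp\big(np(e^\lambda-1-\lambda k_1)\big)$.

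The second step is to optimise the exponent in $\lambda$. Differentiating $e^\lambda-1-\lambda k_1$ gives $e^\lambda-k_1$, so the minimiser is $\lambda=\log k_1$, which is admissible ($\lambda>0$) precisely because $k_1>1$. Substituting yields $e^\lambda-1-\lambda k_1=k_1-1-k_1\log k_1=-\kappa$, whence
\begin{equation}
\Pr(X>k_1 r\sqrt n)\le\exp(-\kappa\,np)=\exp(-\kappa\,r\sqrt n),
\end{equation}
which already establishes the claim, the advertised $\Lo(\sqrt n)$ slack being vacuously satisfied.

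If one prefers to retain the exact factor instead of the crude estimate $\log(1+x)\le x$, I would evaluate $(1-p+pe^\lambda)^n=(1+(k_1-1)r/\sqrt n)^n$ at $\lambda=\log k_1$ directly: taking logarithms and expanding $n\log(1+(k_1-1)r/\sqrt n)=(k_1-1)r\sqrt n-\Tfrac12(k_1-1)^2r^2+\LO(n^{-1/2})$ shows the full exponent equals $-\kappa r\sqrt n-\Tfrac12(k_1-1)^2r^2+\LO(n^{-1/2})$, i.e.\ the stated bound with an explicit $\LO(1)$ correction, a fortiori $\Lo(\sqrt n)$.

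There is essentially no genuine obstacle: the result is a textbook relative Chernoff bound for the binomial upper tail, and the only points needing care are (i) verifying that the optimal exponential tilt $\lambda=\log k_1$ lies in the admissible range, which is exactly where the hypothesis $k_1>1$ enters, and (ii) checking that the discarded lower-order contributions sit inside the $\Lo(\sqrt n)$ term. One could equivalently invoke the Kullback--Leibler form of the Chernoff--Hoeffding inequality, $\Pr(X\ge na)\le\exp(-nD(a\|p))$ with $a=k_1p$, and expand $nD(a\|p)=\kappa r\sqrt n+\LO(1)$ to the same effect; this route connects more directly with the Hoeffding-type bounds already used in the paper.
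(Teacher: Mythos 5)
Your proof is correct; note, though, that the paper itself offers no argument to compare against---its entire proof is the citation \citet[Lem.~A.2]{Ruck:03b}---so your derivation supplies a self-contained substitute for an externally delegated step. The tilting computation (Markov applied to $e^{\lambda X}$, the factorized binomial moment generating function, the estimate $(1+p(e^{\lambda}-1))^n \le \exp\big(np(e^{\lambda}-1)\big)$, optimization at $\lambda=\log k_1$, admissible precisely because $k_1>1$) is carried out correctly and yields the clean bound $\exp(-\kappa\, r\sqrt{n})$, which is in fact slightly stronger than the assertion, since the $\Lo(\sqrt{n})$ slack may be taken identically zero; your refined variant keeping the exact factor, and the equivalence with the Kullback--Leibler form $\Pr(X\ge na)\le \exp(-n D(a\,\|\,p))$, are likewise sound. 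One point is worth making explicit: the multiplicative (Chernoff/relative-entropy) form is genuinely needed here, because the additive Hoeffding bound that the paper does prove and use elsewhere (Lemma~\ref{hoef}) would give for this event only $\exp(-2(k_1-1)^2 r^2)$---constant in $n$---since the deviation $(k_1-1)r\sqrt{n}$ is of order $\sqrt{n}$ out of $n$ trials. So the lemma is not a corollary of the paper's own Hoeffding lemma, and your choice of tool is the natural one (and presumably the one used in the cited companion paper); the only implicit hypothesis, $r/\sqrt{n}\le 1$ so that the binomial parameter is well defined, is guaranteed eventually in $n$ and by the paper's convention $r_n=\min(r,\sqrt{n})$.
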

\begin{proof}{} \citet[Lem.~A.2]{Ruck:03b}
\hfill\qed\end{proof}

As in (II), $|t|<C n^{1+3/\delta}$, the integrand of $n \, {\rm MSE}(S_n,Q_n \,|\,\,D_{k,\tilde t}\,)$ is bounded by some polynomial in $n$,
and hence by Lemma~\ref{binlem} the contribution of (II) is indeed $\Lo(n^{-1})$.

Another consequence of the exponential decay of \eqref{alphahoeff} is
that we may neglect values of $K>k_1(n) r\sqrt{n}$  when integrating along $K$.
\begin{Cor} \label{corewk}
Let $K\sim{\rm Bin}(n,r/\sqrt{n}\,)$. Then, in the setup of Lemma~\ref{binlem}, for any $j\in\N$,
\begin{equation}
\Ew[K^j \Jc_{\{X\geq k_1(n) r \sqrt{n}\}}]=\Lo(e^{-rn^{d}})
\end{equation}
for any $0<d<\sqrt{n}$.
\end{Cor}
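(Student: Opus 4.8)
The plan is to reduce the truncated moment entirely to the pure tail probability already bounded in Lemma~\ref{binlem}. First I would use the deterministic bound $K\le n$, so that $K^j\le n^j$ holds pointwise; hence
\begin{equation}
\Ew\big[K^j \Jc_{\{K\geq k_1(n) r \sqrt{n}\}}\big]\le n^j\,\Pr\big(K\geq k_1 r\sqrt{n}\,\big).
\end{equation}
This trades the unbounded weight $K^j$ for a harmless polynomial prefactor multiplying a probability that we already know how to estimate.

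Next I would insert Lemma~\ref{binlem}. Writing $\kappa=k_1\log k_1+1-k_1$, which is strictly positive for $k_1>1$, the lemma gives $\Pr(K\geq k_1 r\sqrt{n})\le\exp(-\kappa\,r\sqrt{n}+\Lo(\sqrt{n}\,))$, and combining this with the previous display yields
\begin{equation}
\Ew\big[K^j \Jc_{\{K\geq k_1(n) r \sqrt{n}\}}\big]\le\exp\big(j\log n-\kappa\,r\sqrt{n}+\Lo(\sqrt{n}\,)\big).
\end{equation}
The one point worth spelling out is the absorption of the prefactor: since $j\log n=\Lo(\sqrt{n}\,)$, the polynomial $n^j$ is swallowed by the error term already living in the exponent, so the right-hand side is again of the form $\exp(-\kappa\,r\sqrt{n}+\Lo(\sqrt{n}\,))$.

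Finally I would compare exponential rates against the target $e^{-r n^{d}}$. For a fixed $0<d<\Tfrac{1}{2}$ one has $r n^{d}=\Lo(\sqrt{n}\,)$, so that $-\kappa\,r\sqrt{n}+\Lo(\sqrt{n}\,)+r n^{d}\to-\infty$; equivalently, the quotient of the bound above by $e^{-r n^{d}}$ tends to $0$, which is precisely the asserted $\Lo(e^{-r n^{d}})$. I do not expect a genuine obstacle here, as the argument is essentially mechanical once Lemma~\ref{binlem} is in hand; the only care required is in the bookkeeping of the two competing rates $\sqrt{n}$ and $n^{d}$ in the exponent, and it is exactly this competition that pins down the admissible range of the decay exponent $d$.
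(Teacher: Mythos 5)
Your proposal is correct and follows exactly the paper's own argument: bound $K^j\le n^j$ pointwise, apply the exponential tail bound \eqref{alphahoeff} from Lemma~\ref{binlem}, and absorb the polynomial factor $n^j$ into the $\Lo(\sqrt{n}\,)$ term in the exponent. Your additional care in pinning down the admissible range of $d$ (effectively $0<d<\Tfrac{1}{2}$, so that $rn^{d}=\Lo(\sqrt{n}\,)$) is a sensible clarification of the statement's loosely phrased condition, but the route is the same as the paper's one-line proof.
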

\begin{proof}{}
$\Ew[K^j \Jc_{\{K\geq k_1(n) r \sqrt{n}\}}]
\leq n^j\Pr(X>k_1(n)r\sqrt{n})\stackrel{\eqref{alphahoeff}}{=} \Lo(e^{-rn^{d}})
$.
\hfill\qed\end{proof}
\paragraph{\small Negligibility of case (III)}
We apply Hoeffding's first bound from Lemma~\ref{hoef}:
\begin{equation}\label{abschtz}
\Pr\{S_n> \sqrt{t} \,\Big|\,D_{k,\tilde t}\}\leq \Pr(T_{n,k}(\sqrt{t}\,)\geq -\tilde t\,\big|\,\,D_{k,\tilde t}\,)\leq \exp(-2n\Delta^2/b^2)
\end{equation}
for $\Delta:=-L(\sqrt{t}\,)-\frac{\tilde t}{n}$. As $\psi$ is isotone, $L$ is antitone, hence in case (III),
\begin{equation} \label{8.22}
L(\sqrt{t}\,)\leq  L(b\sqrt{k_2\log(n)/n}\,)=-b\sqrt{k_2\log(n)/n}+\Lo(\sqrt{\log(n)/n}\,)
\end{equation}
Thus
\begin{equation}
  \Delta\geq-L(\sqrt{t}\,)-\frac{kb}{n}
\stackrel{\eqref{8.22}}{>}\frac{b}{\sqrt{n}}[\sqrt{k_2\log(n)}+\Lo(\sqrt{\log(n)}\,)]
\end{equation}
and
$
 \exp(-2\frac{n\, \Delta^2}{b^2})< n^{-2k_2}(1+\Lo(n^0))
$.  
This latter is $\Lo(n^{-3-3/\delta})$ and thus integrating $n\, {\rm MSE}$ out along (III) we get something of order $\Lo(n^{-1})$.
%
\paragraph{\small Asymptotic normality}
On (I), by Lemma~\ref{Kohllem}
\begin{equation}
\Pr\big\{S_n\geq \sqrt{t} \,\Big|\,D_{k,\tilde t}\,\big\}=
\Pr\left(\frac{T_{n,k}(\sqrt{t}\,)-\bar n L(\sqrt{t}\,)}{\sqrt{\bar n}\,V(\sqrt{t}\,)}>s_{n,k}(t)\right)+\LO(e^{-\gamma n})
\end{equation}
for some $\gamma>0$, uniformly in $t$ and $k$.
For $i=1,\ldots,\bar n$, let $j_i\in\{1,\ldots,n\}$ be the indices such that $U_{j_i}=0$.
We may apply Theorem~\ref{berry2}(b) to \eqref{738}/\eqref{740},
identifying 
\begin{equation}
\xi_{i,t}:=\frac{1}{ V(t)}[\psi_t(X_{j_i})-L(t)],\qquad i=1,\ldots,\bar n
\end{equation}
and setting $\Theta:=\Theta_n=\{|t|\leq k_2b^2\log(n)/n\}$.
This application is possible, as $|\psi|<b$, so $\sup_{t\in\Theta_n} \Ew|\tilde \xi_{i,t}|^5<\infty$. By condtion~(C)
of our assumptions, Cram\'er condition \eqref{Cddd} of the theorem holds if $n$ is large enough.\\
We note that if in Theorem~\ref{main}, we limit ourselves to term  $A_1$ and hence only assume (C'), we may
apply Theorem~\ref{berry2}(a).
\\
With $G_{n,t}(s)$ from \eqref{Gnstdef} we define
$\tilde G_{n,t}(u):=G_{n,t}(s_{n,k}(u))$, $\tilde G_{n}(t):=\tilde G_{n,t}(t)$
and obtain for $|t|\leq k_2 b^2 \log(n)/n$ and $K<k_1r\sqrt{n}$ uniformly in $t$ and $k$:
\begin{eqnarray}
&&\LO(\exp(-\gamma n))+\Pr\big\{S_n\geq \sqrt{t} \,\Big|\,D_{k,\tilde t}\,\big\}=
\Pr\big(\sum_{i=1}^{\bar n}\xi_{i,\sqrt{t}}>s_{n,k}(\sqrt{t}\,)\big)=1-\tilde G_n(\sqrt{t}\,)+\LO(n^{-3/2})
\label{thm73}
\end{eqnarray}
Hence, using negligibility of (II), (III) and (IV),
and setting
\begin{equation}
n^\natural=\sqrt{\bar n/n},\qquad l_n=n^\natural \sqrt{k_2\log(n) },\qquad l^{(0)}_n=k_2b^2 \log(n)/n
\end{equation}
we obtain
\begin{eqnarray}
&&n\,{\rm MSE}(S_n,Q_n\,\big|\,D_{k,\tilde t}\,)=
(n^\natural)^{-2}\, \bar n\,\int_0^{l_n^{(0)}}\mbox{\hspace{-.2cm}}
1-\tilde G_n(\sqrt{t}\,)+\tilde G_n(-\sqrt{t}\,)\,dt+ \Lo(n^{-1})=\nonumber\\
&=&2(n^\natural)^{-2} \int_0^{bl_n}u\Big(1-\tilde G_n(\frac{u}{\sqrt{\bar n}}\,)+ \tilde G_n(-\frac{u}{\sqrt{\bar n}}\,)\,\Big)\,du+ \Lo(n^{-1})
\label{letzteqn}
\end{eqnarray}
As $\tilde G_n$ is arbitrarily smooth, integration by parts is available and gives
\begin{equation}
n\,{\rm MSE}(S_n,Q_n\,\big|\,D_{k,\tilde t}\,)=R_n+(n^\natural)^{-2}  \int_{-bl_n}^{bl_n}\frac{u^2}{\sqrt{\bar n}}\,\,G_n'(\frac{u}{\sqrt{\bar n}}\,)\,du+ \Lo(n^{-1})
\end{equation}
with %
\begin{equation}
R_n:=k_2\, \log(n)\,b^2\, \big[1-\tilde G_n(b\, \sqrt{\Tfrac{k_2\log(n)}{n}}\,)-\tilde G_n(-b\, \sqrt{\Tfrac{k_2\log(n)}{n}}\,)\,\big]
\end{equation}
A closer look at $s_{n,k}(\pm b\, \sqrt{\Tfrac{k_2\log(n)}{n}}\,)$ reveals
\begin{eqnarray}
s_{n,k}(\pm b\, \sqrt{\Tfrac{k_2\log(n)}{n}}\,)
&
\stackrel{{\rm \SSs \eqref{VD}}}{=}&
\frac{\LO(\sqrt{n}\,)\pm b\, \sqrt{\frac{k_2\bar n^2 \log(n)}{n}}+\LO(\frac{\bar n\,\log(n)}{n}\,)}
{\sqrt{\bar n}\,(v_0+\Lo(n^0))}=
\frac{\pm b \sqrt{k_2\log(n)} }{v_0}(1+\Lo(n^0))
\end{eqnarray}
We also note that, again  by (bmi) $v_0^2= \Ew[\psi^2]\leq b^2$, hence $b/v_0>1$.
In particular, eventually in $n$,
\begin{equation} \label{randgr}
|\tilde s_{n,k}(\pm b \sqrt{k_2\log(n)}\,)|>\sqrt{2\log(n)}
\end{equation}
But, as $|\psi|\leq b$ by (bmi), $|\kappa|\leq b^4$ and $|\rho|\leq b^3$, and thus by Mill's ratio, there is
some $0<K<\infty$, independent of $t$, $n$, such that for any $s>0$
\begin{equation} \label{mill2}
\max\big(1-G_{n,t}(s),\,G_{n,t}(-s)\big) \leq K |s|^5 \,\exp(-s^2/2)
\end{equation}
Thus  for $n$ sufficiently large
\begin{equation}
1-\tilde G_n(b\, \sqrt{\Tfrac{k_2\log(n)}{n}}\,)=\exp(-\frac{k_2b^2\log(n) }{2v_0^2}+\Lo(n^0)))
=  \LO( \frac{\log(n)^{5/2}}{n^{1+\delta}})
\end{equation}
for some $\delta>0$. The same goes for $\tilde G_n(-2b\, \sqrt{\Tfrac{\log(n)}{n}}\,)$, and therefore,
$
R_n=\LO(\log(n)^{7/2}/n^{1+\delta})=\Lo(n^{-1})
$ 
and
\begin{equation}
n\,{\rm MSE}(S_n,Q_n\,\big|\,D_{k,\tilde t}\,)=(n^\natural)^{-2}\int_{-bl_n}^{bl_n}\frac{u^2}{\sqrt{\bar n}}\,\,
G_n'(\frac{u}{\sqrt{\bar n}}\,)\,du+ \Lo(n^{-1})\label{letzteq}
\end{equation}
To make more transparent, which terms are bounded to which degree, we introduce
the following notation, which will also help {\tt MAPLE}
to ignore irrelevant terms
${t^{\natural}}:=\Tfrac{\tilde t}{\sqrt{ \bar n}}$,
$\tilde s_{n,k}(x)=s_{n,k}(\Tfrac{x}{\sqrt{\bar n}}\,)$.
Then on (I), $u=\LO(\sqrt{\log (n)}\,)$, ${t^{\natural}}=\LO(n^0)$.
In particular this will not affect the remainder terms of the Taylor expansions of
assumption (D).\\
In the sequel, we drop the indices of $s_{n,k}$ and $\tilde s_{n,k}$, where they are clear from the context.
Next, we spell out $\tilde G'_{n}(u)$ in \eqref{letzteq} more explicitly.
Denote
\begin{equation}
{\cal G}_n(s,t):=G_{n,t}(s),\quad G^{(1)}_{n,t}(s):=[\Tfrac{\partial}{\partial s}{\cal G}_n] (s,t),\quad G^{(2)}_{n,t}(s):=
[\Tfrac{\partial}{\partial t}{\cal G}_n] (s,t)
\end{equation}
Then, as $
\tilde s_{n,k}'(x)=s_{n,k}'(\frac{x}{\sqrt{\bar n}}\,)/{\sqrt{\bar n}},
$
$$
\hspace{-1em}\tilde G'_{n}(\Tfrac{u}{\sqrt n}\,)=
[G^{(1)}_{n,x}(s(x))s'(x)+G^{(2)}_{n,x}(s(x))]\,\Big|_{x=\frac{u}{\sqrt{\bar n}}}=
G^{(1)}_{n,u/{\sqrt{\bar n}}}(\tilde s(u))\,\tilde s'(u) \sqrt{\bar n}+G^{(2)}_{n,u/{\sqrt{\bar n}}}(\tilde s(u))=:
\tilde g_n(u) \sqrt{\bar n}
$$ 
and therefore
\begin{equation}
n\,{\rm MSE}(S_n,Q_n\,\big|\,D_{k,\tilde t}\,)=(n^\natural)^{-2}\int_{-bl_n}^{b l_n} \, u^2 \tilde g_{n}(u)\,du+\Lo(n^{-1})\label{letz2eq}
\end{equation}
%
\paragraph{\small Expanding $\tilde g_{n}(u)$}\label{neuexp}
Considering $\tilde g_{n}(u)$ more closely, we expand the terms according to assumption (D)
---with the help of our {\tt MAPLE} procedures {\tt asS}, {\tt asS1}, {\tt asg}
\begin{eqnarray}
\tilde s(u)&=&\frac{-t^\natural -\sqrt{\bar n} L(\Tfrac{u}{\sqrt{\bar n}})}{V(\Tfrac{u}{\sqrt{\bar n}})}= 
\Tfrac{1}{v_0}\Big[(u-t^\natural)- \Tfrac{u}{\sqrt{\bar n}}\big(\Tfrac{l_2u}{2}+\tilde v_1(u-t^\natural)\big)+\nonumber\\
&&\quad +\Tfrac{1}{\bar n}\Big((l_2\Tfrac{\tilde v_1}{2}-\Tfrac{l_3}{6}) u^3+(u-t^\natural)u^2(\tilde v_1^2-\tilde v_2/2)\Big)\Big]+\LO(n^{-(1+\delta)})\\
\tilde s'(u)&=&-\frac{L'(\Tfrac{u}{\sqrt{\bar n}})}{V(\Tfrac{u}{\sqrt{\bar n}})}+\frac{(t^\natural+L(\Tfrac{u}{\sqrt{\bar n}}))V'(\Tfrac{u}{\sqrt{\bar n}})}{V^2(\Tfrac{u}{\sqrt{\bar n}})}=
\Tfrac{1}{v_0}\Big[1-l_2\Tfrac{u}{\sqrt{\bar n}}-2\tilde v_1 \Tfrac{u}{\sqrt{\bar n}}+\Tfrac{t^\natural}{\sqrt{\bar n}}\tilde v_1+\nonumber\\
&&\quad + \Tfrac{1}{\bar n}\Big((3\tilde v_1^2-\Tfrac{l_3}{2}-\Tfrac{3}{2}\tilde v_2+\Tfrac{3}{2}\tilde v_1l_2) u^2 + u t^\natural(\tilde v_2-2\tilde v_1^2)\Big)\Big]+\LO(n^{-(1+\delta)})
\end{eqnarray}
as well as
\begin{eqnarray}
G^{(1)}_{n,u/{\sqrt{\bar n}}}(\tilde s)
&=&\varphi(\tilde s)
\Big[1+\Tfrac{1}{6\sqrt{\bar n}}(\rho_0+\rho_1\Tfrac{u}{\sqrt{\bar n}})\,{(\tilde s^3-3\tilde s)}
+\Tfrac{1}{24n}\kappa_0{(\tilde s^4-6\tilde s^2+3)}+\nonumber\\
  &&\qquad+\Tfrac{1}{72n}\rho_0^2{(\tilde s^6-15\tilde s^4+45\tilde s^2-15)}\Big]+\LO(n^{-(1+\delta)})
\end{eqnarray}
and respectively,
$
G^{(2)}_{n,u/{\sqrt{\bar n}}}(\tilde s)
=\varphi(\tilde s)\Tfrac{\rho_1}{6\sqrt{\bar n}}(1-\tilde s^2)+\LO(n^{-(1/2+\delta)})
$. 
This gives
\begin{equation}
\tilde g_{n}(u)=v_0 \varphi(\tilde s)[1+\Tfrac{1}{\sqrt{\bar n}} P_1(u,t^\natural)+\Tfrac{1}{\bar n} P_2(u,t^\natural)]+\LO(n^{-(1+\delta)})
\end{equation}
for
\begin{equation}
P_1(u,t^\natural)=-l_2 u - 2 \tilde v_1 u + t^\natural \tilde v_1 + \Tfrac{\rho_0}{6v_0^3}(u-t^\natural)^3-\Tfrac{\rho_0}{2v_0}(u-t^\natural)\\
\end{equation}
and $P_2(u,t^\natural)$ a corresponding polynomial in $u$, $t^\natural$, $\tilde v_1$, $\tilde v_2$, $l_2$, $l_3$, $\rho_0$, $\rho_1$, and $\kappa_0$,
the exact expression of which may be taken from our {\tt MAPLE} procedure {\tt asg}.\\
To be able to calculate the integrals, we expand $\varphi(\tilde s) $ in a Taylor expansion about
$s_1=(u-t^\natural)/v_0$
as
\begin{equation}
\varphi(\tilde s) =\varphi(s_1) [1-s_1(\tilde s-s_1)+(s_1^2-1)(\tilde s-s_1)^2/2]+\LO(n^{-(1+\delta)})
\end{equation}
and hence
$\tilde g_{n}(u)=v_0 \varphi(s_1)g_n(s_1)+\LO(n^{-(1+\delta)})$
with
$
g_n(s_1):=1+\Tfrac{1}{\sqrt{\bar n}} \tilde P_1(s_1,t^\natural)+\Tfrac{1}{\bar n} \tilde P_2(s_1,t^\natural)
$ 
for
\begin{eqnarray}
\tilde P_1(s_1,t^\natural)&=&\rho_0\frac{s_1^3-3s_1}{6}+(\Tfrac{l_2}{2}+\tilde v_1)s_1^3-(l_2+2\tilde v_1)s_1v_0+
(l_2+\tilde v_1)[s_1^2-1]t^\natural+\frac{(t^\natural)^2l_2 s_1}{2v_0}
\end{eqnarray}
and $\tilde P_2(s_1,t^\natural)$ a corresponding polynomial 
again to be looked up from our {\tt MAPLE} procedure {\tt asgns}.
This gives
\begin{equation}
n\,{\rm MSE}(S_n,Q_n\,\big|\,D_{k,\tilde t}\,)=(n^\natural)^{-2}\int_{-bl_n/v_0}^{b l_n/v_0} \, h_n(s)\varphi(s)\,\lambda(ds)
+\Lo(n^{-1})\label{letz3eq}
\end{equation}
for
\begin{equation}\label{intds}
h_n(s)=u_1(s)^2 g_n(s),\qquad u_1(s)=sv_0+t^\natural
\end{equation}
\paragraph{\small Selection of the least favorable contamination} \label{lfselec}
Function $h_n(s)$ from \eqref{intds} is a polynomial in $s$, hence on (I), where $|s|=\LO(\log(n))$, we may ignore terms
of (pointwise-in-$s$) order $\LO(n^{-(1+\delta)})$. This gives a complicated expression of form
\begin{equation}
h_n(s)=(sv_0+t^\natural)^2+\frac{1}{\sqrt{n}}Q_1+\frac{1}{n}Q_2
\end{equation}
where $v_0 Q_1$ is a polynomial in $s$, $t^\natural$, $v_0$, $l_2$, $\tilde v_1$, and $\rho_0$ with $\mathop{\rm deg}(Q_1,s)=5$ and  $\mathop{\rm deg}(Q_1,t)=4$,
and $v_0^2 Q_2$ is a polynomial in $s$, $t^\natural$, $v_0$, $l_2$, $\tilde v_1$, $\rho_0$, $l_3$, $\tilde v_2$, $\tilde \rho_1$, and $\kappa_0$
with $\mathop{\rm deg}(Q_2,s)=8$ and  $\mathop{\rm deg}(Q_1,t)=6$; the exact expressions are available on the web-page and may
be generated by our {\tt MAPLE}-procedure {\tt ashn}. Denoting the second partial derivative w.r.t.\ $t^\natural$ by an index $t,t$
we consider
$
h_{n,t,t}(s)=2+\frac{1}{\sqrt{n}}Q_{1,t,t}+\frac{1}{n}Q_{2,t,t}
$ 
where $\mathop{\rm deg}(Q_{1,t,t},s)=3$ and  $\mathop{\rm deg}(Q_{2,t,t},s)=6$, and under symmetry, more specifically
\begin{equation} \label{symdef}
l_2=\tilde v_1=\rho_0=0
\end{equation}
$Q_{1,t,t}=0$ and  $\mathop{\rm deg}(Q_{2,t,t},s)=4$. That is, on (I), uniformly in $s$,
$h_{n,t,t}(s)=2+\LO(\log(n)^3/\sqrt{n})$, and under \eqref{symdef}, the remainder is even $\LO(\log(n)^4/n)$.
Hence eventually in $n$, uniformly in $s$, $h_n$ is strictly convex in $t^\natural$, hence takes its maximum on the boundary,
that is for $|t^\natural|$ maximal.

Going back to the definition of $t^\natural$, we note that for fixed $n$ and $k$,
$
t^\natural=\tilde t/\sqrt{\bar n}=\sum_{i:U_i=1}\psi(X_i-t)/\sqrt{\bar n}
$. 
Obviously, $\tilde t$ is bounded in absolute value by $kb$. This value may be attained if
(up to $\LO(n^{-1})$) all terms $\psi(X_i-t)$ are either $b$ or $-b$ for all $t$ in (I).
This amounts to concentrating essentially all the contamination either right of $\hat y_n+b\sqrt{k_2\log(n)/n}\,$ or left of
$\check y_n-b\sqrt{k_2\log(n)/n}\,$; the decision which of the two alternatives is least favorable is deferred to subsubsection~\ref{Decalter}.

As we may allow for deviations from this ``outlyingness'' as long as we do no affect the expansion of the MSE up to $\LO(n^{-1})$,
we may weaken the concentration property to 
\eqref{contbed1} resp.\ \eqref{contbed2}:
On (I), $|{t^{\natural}}|$ is bounded, so 
smallness of the probabilities in 
\eqref{contbed1} resp.\ \eqref{contbed2} entails that also the expectations of $(t^\natural)^j$,
$j=1,\ldots,6$ arising in $h_n(s)$ are $\Lo(n^{-1})$.

Denote a distribution in $\tilde {\cal Q}_n$ which is contaminated according to \eqref{contbed1} resp.\ \eqref{contbed2} by $Q_{n}^0$.
By the previous considerations, under $Q_{n}^0$, we may consider $|\tilde t|$ as being exactly $kb$,
and we will consider the cases $\tilde t=\pm kb$ simultaneously. For the substitution $t^\natural=\pm kb/\sqrt{\bar n}$, the
following abbreviations are convenient
\begin{equation} \label{knatdef}
\tilde k:=k/\sqrt{n},\qquad k^\natural:=k/\sqrt{\bar n}=\tilde k/n^\natural
\end{equation}
Taking up the dependency on $t^\natural$ in $h_n(s)$ as $h_n(s)=h_n(s,t^\natural)$,
in the {\tt MAPLE} procedure {\tt ash}, we introduce
\begin{equation}
\tilde h_n(s)=\tilde h_n(s,k^\natural)= h_n(s,k^\natural b)
\end{equation}
%
\paragraph{\small Integration w.r.t.\ $s$} \label{intssec}
In this step we integrate out $s$ in $\tilde h_n(s)$.
As $bl_n/v_0>\sqrt{2\log(n)}$, by Lemma~\ref{lemnormlog}, we may drop the integration limits and get
\begin{equation}
n\,{\rm MSE}(S_n,Q_n^0\,\big|K=k\,)=(n^\natural)^{-2}\int_{- \infty}^{ \infty} \, \tilde h_n(s)\varphi(s)\,\lambda(ds)
+\Lo(n^{-1})\label{intds2}
\end{equation}
So for integration, we use that for $X\sim{\cal N}(0,1)$, $\Ew[X^j]=0$, for ${j=1,3,5,7}$, and
\begin{equation}
\Ew[X^2]=1 ,\quad\Ew[X^4]=3 ,\quad\Ew[X^6]=15 ,\quad\Ew[X^8]=115
\end{equation}
and get (by our {\tt MAPLE} procedures {\tt intesout} and {\tt asMSEK})
\begin{eqnarray}
n\,{\rm MSE}(S_n,Q^0_n\,\big|K=k\,)&=&
\Lo(n^{-1})+(n^\natural)^{-2}\Big[(k^\natural)^2 b^2+v_0^2+
\Tfrac{1}{\sqrt{\bar n}}[\pm (3l_2+4\tilde v_1)v_0^2 k^\natural b \pm l_2(k^\natural)^3b^3]+\nonumber\\
&& + \Tfrac{1}{\bar n}\big[(\Tfrac{5}{4} l_2^2 +\Tfrac{1}{3}l_3)(k^\natural)^4 b^4 +(3 \tilde v_2 + 2 l_3 + 3 \tilde v_1^2  + \Tfrac{15}{2} l_2
 + 12 \tilde v_1 l_2)v_0^2 (k^\natural )^2 b^2+\nonumber\\
 && +(\rho_0 (2\tilde v_1+l_2) + \Tfrac{2}{3} \rho_1) v_0^3)+(12 \tilde v_1 l_2 + l_3 + 3 \tilde v_2 +
 \Tfrac{15}{4} l_2^2  + 9 \tilde v_1^2 ) v_0^4\big]\Big]\label{intsout}
\end{eqnarray}
As mentioned in Remark~\ref{BigRem}(c), the terms of $\kappa_0$ cancel out for $A_2$
as do the terms of $\rho_0$ for $A_1$.
\paragraph{\small Collection of terms}
As we want to calculate the expectation with respect to $K$,
we have to expand terms in a way that $k$ is only appearing in integer powers and in the nominator.
For this purpose we employ our {\tt MAPLE} procedures {\tt asNn}, {\tt asKn}, and get
\begin{equation}
(n^{\natural})^{-2}=1+\Tfrac{\tilde k}{\sqrt{n}}+\Tfrac{\tilde k^2}{n}+\Lo(n^{-1})\label{mnatural},\qquad
(n^{\natural})^{-3}=1+\Tfrac{3\tilde k}{2\sqrt{n}}+\Lo(n^{-1/2}),\qquad (n^{\natural})^{-4}=1+\Lo(n^{0})
\end{equation}
\begin{equation}
k^\natural=\tilde k+\Tfrac{\tilde k^2}{2\sqrt{n}}+\Lo(n^{-1/2}), \;\; 
(k^\natural)^2
=\tilde k^2+ \Tfrac{\tilde k^3}{\sqrt{n}}+\Tfrac{\tilde k^4}{n}+\Lo(n^{-1}),\;\;
(k^\natural)^3
=\tilde k^3+\Tfrac{3 \tilde k^4}{\sqrt{n}}+\Lo(n^{-1/2}),\;\; 
(k^\natural)^4
=\tilde k^4+\Lo(n^{0})
\end{equation}
Substituting $k^\natural$ and $n^{\natural}$ by means of these expressions, we obtain
({\tt MAPLE} procedure {\tt asMSEk})
\begin{eqnarray}
&\hspace{-2em}&n\,{\rm MSE}(S_n,Q^0_n\,\big|\,K=k\,)=
\Lo(n^{-1})+{\tilde k}^{2}{b}^{2}+{v_0}^{2}+{\frac {[\pm \left(4\,\tilde v_1\,+
3\,l_2\,\right )b+1]\tilde k {v_0}^{2}+(2\pm l_2b)\,{\tilde k}^{3}{b}^{2}}{\sqrt {n}}}+\nonumber\\
&\hspace{-2em}&+
{\frac {\left (3\,{b}^{2}\pm 3\,l_2\,{b}^{3}+\left (\frac{5}{4}\,{l_2^2}+\frac{1}{3}\,l_3\right ){b}^{4}\right ){\tilde k}^{4}
+\left(3\tilde v_2+9{\tilde v_1}^{2}+\frac{15}{4}\,{l_2^2}+l_3+12\,l_2\,\tilde v_1\right){v_0}^{4}+
\left((l_2+2\tilde v_1)\,{\rho_0}+\frac{2}{3}\,{\rho_1}\right){v_0}^{3}}{n}}+\nonumber\\
&\hspace{-2em}&+{\frac {
\left (\left (3\,{\tilde v_1}^{2}+3\,\tilde v_2+
12\,l_2\tilde v_1+\frac{15}{2}\,{l_2^2}+2\,l_3\right )\,{b}^{2}+1
\pm \left (6\,l_2\,+8\,\tilde v_1\,\right )b\right ){\tilde k}^{2}v_0^2}{n}}
\end{eqnarray}
\paragraph{\small Integration w.r.t.\ $\tilde k$}
As by Corollary~\ref{corewk} the event $\{K>(1+\delta)r\sqrt{n}\}$ only attributes $\Lo(n^{-1})$ to the
expectation of $\Ew[K^{j}]$, $j=0,\ldots,4$, we can now simply use Lemma~\ref{binlem} to determine
the MSE. This gives the result by our {\tt MAPLE} procedures {\tt intekout}, {\tt asMSE}:
\begin{equation}
  n\,\Ew_{Q_n^0}[S_n^2\,]=
  {r}^{2}{b}^{2}+{v_0}^{2}+\Tfrac{r}{\sqrt{n}\,}\,A_1+\Tfrac{1}{n}\,A_2+\Lo(n^{-1}) \label{8.70}
\end{equation}
with
\begin{eqnarray}
A_1&=&  {v_0}^{2}\,\Big( \pm(4\,\tilde v_1+3\,l_2 \,)b+1 \Big)+{b}^{2} +
 [2\,{b}^{2}\pm l_2\,{b}^{3} \,]\,{r}^{2} \label{8.71}\\
A_2&=&{{v_0}^{3}\,\Big((l_2+2\,\tilde v_1 \,)\rho_0+\Tfrac{2}{3}\,\rho_1\Big)+
 {v_0}^{4}\,(3\,\tilde v_2+{\Tfrac {15}{4}}\,{l_2^2}+l_3+9\,{\tilde v_1}^{2}+
 12\,\tilde v_1\,l_2 \,)}+\nonumber\\
 &&\quad+\big[\, {v_0}^{2}\,\Big( (3\,\tilde v_2+3\,{\tilde v_1}^{2}+\Tfrac{15}{2}\,{l_2^2}+2\,l_3+
 12\,\tilde v_1\,l_2 \,){b}^{2}+1\pm (8\,\tilde v_1+6\,l_2 \,)\,b \Big)\pm 3\,l_2\,{b}^{3}+
 5 \,{b}^{2} \,\big]\,{r}^{2}+\nonumber\\
 &&\qquad+\Big( (\Tfrac{5}{4}\,{l_2^2}+\Tfrac{1}{3}\,l_3 \,){b}^{4}
 \pm 3\,l_2\,{b}^{3} +3\,{b}^{2}\Big)\,{r}^{4} \label{8.72}
\end{eqnarray}
\paragraph{\small Decision upon the alternative \eqref{contbed1} or \eqref{contbed2}}\label{Decalter}
Denote $Q_n^-$ a contaminated member in $\tilde {\cal Q}_n(r)$ according to \eqref{contbed1} and
correspondingly $Q_n^+$ according to \eqref{contbed2}. With respect to terms of \eqref{8.70}--\eqref{8.72},
obviously, if $\sup\psi < -\inf \psi$, the maximal MSE is achieved by $Q_n^-$, respectively by $Q_n^+$ if $\sup\psi > -\inf \psi$.
In case $\sup \psi=-\inf\psi$, the terms in $A_1$ are decisive:
\begin{equation}
n(\Ew_{Q_n^+}[S_n^2\,]-\Ew_{Q_n^-}[S_n^2\,])=\Tfrac{rb}{\sqrt{n}}\Big\{l_2\big[(r^2b^2+3v_0^2)(1+2\Tfrac{r}{\sqrt{n}})
 +\Tfrac{3b^2r(r^2+1)}{\sqrt{n}}\big]+
4v_0^2(1+\Tfrac{r}{\sqrt{n}}) v_1\Big\}+\Lo(n^{-1})
\end{equation}
Hence, $Q_n^-$ [$Q_n^+$] is least favorable up to $\Lo(n^{-1})$ if
\begin{equation} \label{wokontlr}
\tilde v_1\,>\,[<]-\Tfrac{l_2}{4}\Big(\Tfrac{b^2}{v_0^2}(r^2+3)(1+\Tfrac{r}{\sqrt{n}}-\Tfrac{2r^2}{n})+3(1-\Tfrac{b^2}{v_0^2})\Big)
\end{equation}
If there is ``$=$'' in \eqref{wokontlr}, no decision can be taken up to order $\Lo(n^{-1})$.
\hfill\qed

\subsection{Proofs to Propositions~\ref{abklingP} and \ref{slowdecay}}\label{abklingp}
For $\ve_1 \in (0,1)$, let $N_{\SSs +}(t)=N_{\SSs +}(t;n,\ve_1,\hat b)$, $N_{\SSs -}(t)=N_{\SSs -}(t;n,\ve_1,\check b)$ be defined as
\begin{equation}
  N_{\SSs +}(t):=\#\Big\{  \psi(x_i-t)\ge \hat b (1-\ve_1),\;U_i=0\Big\},\qquad
  N_{\SSs -}(t):=\#\Big\{  \psi(x_i-t)\le \check b (1-\ve_1),\;U_i=0\Big\}
\end{equation}
The idea behind Propositions~\ref{abklingP} and \ref{slowdecay} is to use the inclusions
\begin{eqnarray}
\big\{\Tsum \psi(x_i-t) \leq 0 \big\} \subset \big\{N_{\SSs +}(t) \le n_{\SSs +} \big\},\;\;  
\big\{\Tsum \psi(x_i-t) \geq 0 \big\} \subset
\big\{N_{\SSs -}(t) \le n_{\SSs -} \big\} \label{N-inkla}
\end{eqnarray}
for some numbers $n_{\SSs -}$, $n_{\SSs +}$  yet to be specified.\\
For Proposition~\ref{abklingP}, symbolically in the tableau of page~\pageref{tabelAUF}, we plug in $\delta=0$,
so that the second and third line are separated by $|t|=Cn$. All cases except for case (IV) remain unchanged.
For (IV), we consider the first inclusion of \eqref{N-inkla}.  In this case,
 $\{\Tsum \psi(x_i-t) \leq 0 \}$ is distorted most importantly by $\tilde t= k\hat b$.
On the other hand the $N''=n-N_{\SSs +}-K$ remaining observations cannot be smaller than $N''\check b$, so
\begin{equation}
\Tsum \psi(x_i-t) \leq 0\qquad \Longrightarrow\qquad N_{\SSs +}\hat b(1-\ve_1)+ K \hat b +N'' \check b \leq 0
\end{equation}
that is
$
N_{\SSs +} \leq \Big(-n \check b - K(\hat b -\check b)\Big)\bigg/\Big(\hat b (1-\ve_1) -\check b\Big)
$, 
and as this has to hold for all $K\leq \ve_0' n$,
$
N_{\SSs +} \leq  n \Big(- \check b - \ve_0'(\hat b -\check b)\Big)\bigg/\Big(\hat b (1-\ve_1) -\check b\Big)=:n_{\SSs +}=n_{\SSs +}(\ve_0')
$,  
where by \eqref{s0def} and as $0<\ve_1<1$, we get $n_{\SSs +}=n \ve_{\SSs +}$ for
\begin{equation}\label{posrel}
0< \ve_{\SSs +}=\Big(- \check b - \ve_0'(\hat b -\check b)\Big)\bigg/\Big(\hat b (1-\ve_1) -\check b\Big)<1-\ve_0'
\end{equation}
Accordingly, for the second inclusion in \eqref{N-inkla}, we obtain
\begin{equation}
N_{\SSs -}\leq n\ve_{\SSs -}=:n_{\SSs -}=n_{\SSs -}(\ve_0') \qquad \mbox{for}\quad \ve_{\SSs -}:=\Big(\hat b - \ve_0'(\hat b -\check b)\Big)\bigg/\Big(\hat b - \check b(1-\ve_1) \Big)
\end{equation}
where again $0<\ve_{\SSs -}<1-\ve_0'$. Hence with $\bar k =\ulcorner \ve_0' n\urcorner-1$
\begin{eqnarray}
\hspace{-2em}\Pr\{S_n> \sqrt{t} \,\Big|\,D_{k,\tilde t=-k\check b}\}
&\stackrel{\SSs\eqref{mest2}}{\leq}&
\Pr\Big\{T_{n,k}(\sqrt{t}\,)\ge k\check b\Big\}\leq
\Pr \Big\{T_{n,k}(\sqrt{t}\,)\ge \bar k \check b\Big\}\leq 
\Pr\Big\{N_{\SSs -}(\sqrt{t\,}\,) \leq n_{\SSs -}\,\big|\, K= \bar k\Big\} \label{binom1}
\end{eqnarray}
and correspondingly
$\Pr\Big\{S_n< -\sqrt{t} \,\Big|\,D_{k,\tilde t=k\hat b}\Big\}
\leq
\Pr\Big\{N_{\SSs +}(-\sqrt{t\,}\,) \leq n_{\SSs +}\big|\, K= \bar k\Big\}
$. 
But,  ${\cal L}(N_{\SSs \pm}|K=k)$ is ${\rm Bin}(n-k,p_{\SSs \pm})$ for
\begin{equation}
p_{\SSs -}(t)=\Pr\big(\psi(X^{\SSs \rm id}-\sqrt{t\,}\,)\leq \check b (1-\ve_1)\big)=,\qquad
p_{\SSs +}(t)=
\Pr\big(\psi(X^{\SSs \rm id}+\sqrt{t\,}\,)\geq \hat b (1-\ve_1)\big)
\end{equation}
That is, $p_{\SSs -}(t)=F(\sqrt{t\,}\,+ B_{\SSs -})$, $p_{\SSs +}(t)=\bar F(\sqrt{t\,}\,- B_{\SSs +})$
where $\bar F=1-F$ and
\begin{equation}
B_{\SSs -}:=\inf\Big\{y\,\big|\,\psi(y)\ge (1-\ve_1) \check b \Big\},
\qquad B_{\SSs +}:=\sup\Big\{y\,\big|\,\psi(y)\le (1-\ve_1) \hat b \Big\}
\end{equation}
If we abbreviate $m=n-\bar k$, $m_{\SSs \pm} =\ulcorner n_{\SSs \pm} \urcorner$,
$p_t=(1-p_{\SSs +}(t))\vee p_{\SSs -}(t)$,
in the binomial probabilities in \eqref{binom1}, we obtain ${{m}\choose{j}} \leq 2^n$,
 $j=0,\ldots m_{\SSs \pm}$, and $p_{\SSs -}(t), (1-p_{\SSs +}(t))\leq 1$,
so that
\begin{eqnarray}
&&\sup_k \Pr\Big\{|S_n|> \sqrt{t} \,\Big|\,D_{k,|\tilde t|=k\check b}\Big\}
\leq n 2^n p_t^{[m-(m_{\SSs -}\vee m_{\SSs +})]}
\end{eqnarray}
But by \eqref{posrel}, $1-\ve_0'-(\ve_{\SSs -}\vee \ve_{\SSs +})=:\alpha >0$, so
$
m-(m_{\SSs -}\vee m'_{\SSs +})\ge \alpha n-1
$. 
Now, by \eqref{abkling}, for $\hat B=\max\{B_{\SSs +},-B_{\SSs -}\}$, if $n$ is so large that $C n>(T-\hat B)^2$,
\begin{eqnarray*}
&&\sup_k \int_{Cn}^\infty \Pr\Big\{|S_n|> \sqrt{t\,}\, \,\Big|\,D_{k,|\tilde t|=kb}\Big\}\leq
 n 2^{n+1} \int_{Cn}^\infty  t^{-\eta(\alpha n-1)/2}\,dt=
 \exp[-\tilde\alpha n \log(n)(1-\Lo(n^0))]
\end{eqnarray*}
for some $\tilde\alpha'>0$. So (IV) is indeed negligible.\hfill\qed

For Proposition~\ref{slowdecay}, we only show the first case of \eqref{abkl};
the second follows analogously. This time $K=0$, $n$ is fixed, and we use the inclusions of the
complements in \eqref{N-inkla}. Thus
\begin{eqnarray*}
&&\Pr\{S_n\ge \sqrt{t}\}
\stackrel{}{\ge}
\Pr\big\{T_{n,0}(\sqrt{t}\,)> 0\big\}\ge \Pr\big\{N_{\SSs +}(\sqrt{t}) >  n_{\SSs +}(0)\big\}
\end{eqnarray*}
Let
$\tilde p_{\SSs +}=\bar F(\sqrt{t}+B_{\SSs +})$.
To $\delta>0$ there is a $T>0$ such that for $t>T$ and $\tilde p_{\SSs +}^{n_{\SSs +}}>1-\delta$.
Hence for 
$t>T^2$ and $n'=m_{\SSs +}+1$
\begin{eqnarray*}
&&\Pr\{S_n> \sqrt{t\,}\, \}
\geq {{n}\choose{n'}} (1-\tilde p_{\SSs +})^{n'} \tilde p_{\SSs +}^{n-n'}
\geq {{n}\choose{n'}} (1-\delta) \bar F(\sqrt{t\,}\,+B_{\SSs +})^{n'}
\end{eqnarray*}
Now by the first half of \eqref{abkl}, for $d=1/n'$ and some $c>0$, $T'>T$ and for all $t>T'$
\begin{equation}
t^{1/n'}\big(1-F(t)\big) > c
\qquad\iff\qquad  \big(1-F(t)\big)^{n'} > c^{n'} t^{-1}
\end{equation}
Then for the M-estimator $S_n$,
\begin{eqnarray}
\Ew_F[({\rm S}_n)_{\SSs +}^2]
&\geq& \int_{(T')^2}^{\infty}\Pr\Big\{S_n> \sqrt{t\,}\, \Big\}\,dt
\geq \int_{(T')^2}^{\infty}
{{n}\choose{n'}} (1-\delta) c^{n'} (\sqrt{t\,}\,+B_{\SSs +})^{-1}\,dt=\infty
\end{eqnarray}
\hfill\qed
\subsection{Proof of Proposition~\ref{neccond}}\label{neccondp}
For $t>v_0^2 \log(n)/n$, we consider the following inclusion
$$
\Big\{\psi(x-\sqrt{t\,}\,)>b-c_0/\sqrt{n}\Big\}=\Big\{x>\sqrt{t\,}+B_n\Big\}
  \subset \Big\{x>v_0\sqrt{\log(n)/n}+B_n\Big\}$$
Let
$ 
A_{k,t}:=\Big\{\sum_{i\colon U_i=1}\psi(X_i-\sqrt{t\,}\,) \leq (k-1) (b-c_0/\sqrt{n\,}\,)\Big\}
$.  
Hence if $t>v_0^2 \log(n)/n$, by \eqref{condnec},
for all $k>(1-\delta)r\sqrt{n}$,
\begin{equation}\label{restri}
\Pr(A_{k,t}\,\Big|\,K=k)\ge p_0
\end{equation}
Now we proceed as in section~\ref{prmain}, and even with restriction~\eqref{restri} the arguments of
subsection~\ref{lfselec} remain in force, so that we have to maximize $t^\natural$.
But $t>v_0^2 \log(n)/n \iff s>\sqrt{\log n }$ in \eqref{letz3eq}. Hence on the event $A_{k,t}$
for $s\in [\sqrt{\log n };bl_n/v_0)$, we get the bound  $t^\natural\leq (k^\natural-1) (b-c_0/\sqrt{n})/\sqrt{\bar n}$,
while for $s\in (-bl_n/v_0;\sqrt{\log n })$ respectively on ${}^cA_{k,t}$, we  bound $t^\natural$ by $k^\natural b$.
Integrating out these two $s$-domains separately as in subsection~\ref{intssec}, we obtain for $\Delta_n=n\,\Big({\rm MSE}(S_n,Q^0_n\,\big|K=k\,)-{\rm MSE}(S_n,Q^\flat_n\,\big|K=k\,)\Big)$
\begin{eqnarray*}
&&  \Delta_n \geq 
p_0 \int_{\sqrt{\log n }}^{bl_n/v_0}\Big(2 v_0s D_n(\tilde k)+2\tilde kb D_n(\tilde k)-D_n(\tilde k)^2\Big)\,\varphi(s)\,ds+\Lo(n^{-1})
\end{eqnarray*}
for
$ 
D_n(\tilde k)=\tilde k c_0/\sqrt{n}+b/\sqrt{n}+\Lo(1/\sqrt{n})
$. 
But for $0<a_1<a_2<\infty$,
$
\varphi(a_1)/a_2-\varphi(a_2)/a_2\le \int_{a_1}^{a_2} \,\varphi(s)\,ds
$,
so that with $a_1=\sqrt{\log n }$, $a_2=bl_n/v_0$, and as $\varphi(a_2)=\Lo(n^{-1})$,
\begin{eqnarray*}
&&  \Delta_n \geq 
\frac{p_0}{\sqrt{2\pi n}}[2 v_0  D_n(\tilde k)-2 \Tfrac{\tilde kb D_n(\tilde k)+D_n(\tilde k)^2}{bl_n/v_0}]+\Lo(n^{-1})=
\frac{2p_0v_0}{\sqrt{2\pi n}}D_n(\tilde k)+\Lo(n^{-1})
\end{eqnarray*}
Now the restriction to $(1-\delta)r\sqrt{n}<K<k_1r\sqrt{n}$ by Lemma~\ref{binlem} may be dropped, giving
$
\Delta_n \geq\frac{2p_0v_0}{n\sqrt{2\pi}}(rc_0+b)+\Lo(n^{-1})\nonumber
$.
\hfill\qed

\section{Auxiliary Results}\label{appsec}\req

\subsection{Two Hoeffding Bounds}
%
\begin{Lem}\label{hoef}
  Let $\xi_i \iid F$,  $i=1,\ldots,n$ be real--valued random variables, $|\xi_i|\leq M$
  Then for $\ve>0$
  \begin{eqnarray}
    P(\frac{1}{n}\sum_i \xi_i -\Ew[\xi_1] \geq \ve) &\leq& \exp(-\frac{2n\ve^2}{M^2}),\qquad 
    P(\frac{1}{n}\sum_i \xi_i -\Ew[\xi_1] \leq -\ve) \leq \exp(-\frac{2n\ve^2}{M^2})\label{hoe2}
  \end{eqnarray}
\end{Lem}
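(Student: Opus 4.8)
The plan is to prove this as the classical Chernoff--Hoeffding bound, via the exponential Markov inequality combined with a moment-generating-function estimate for bounded centered variables. First I would reduce to the upper tail alone: the second inequality in \eqref{hoe2} follows by applying the first to $-\xi_1,\ldots,-\xi_n$, which are again i.i.d., satisfy $|-\xi_i|\le M$, and have mean $-\Ew[\xi_1]$. So it suffices to bound $P(\tfrac1n\Tsum\xi_i-\Ew[\xi_1]\ge\ve)$. Setting $Y_i:=\xi_i-\Ew[\xi_1]$, the $Y_i$ are i.i.d.\ and centered, and for any $s>0$ exponentiation, Markov's inequality, and independence give
\[
P\Big(\Tsum Y_i\ge n\ve\Big)\le e^{-sn\ve}\,\Ew\Big[e^{s\sum_i Y_i}\Big]=e^{-sn\ve}\prod_{i=1}^{n}\Ew\big[e^{sY_i}\big]=e^{-sn\ve}\big(\Ew[e^{sY_1}]\big)^{n}.
\]

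The crux is \emph{Hoeffding's lemma}: for a centered random variable $Y$ whose support has oscillation (range) $R$ one has $\Ew[e^{sY}]\le\exp(s^2R^2/8)$. I would establish this in the standard way: write each support point as a convex combination of the two endpoints of the supporting interval and use convexity of $u\mapsto e^{su}$ to dominate $e^{sY}$ by the corresponding affine function of $Y$; taking expectations and the logarithm produces a smooth function $h(s)$ with $h(0)=h'(0)=0$, whose second derivative is the variance of a tilted two-point law and hence is bounded by $R^2/4$. A second-order Taylor expansion then yields $h(s)\le s^2R^2/8$. Under the hypothesis $|\xi_i|\le M$ the oscillation of $\xi_1$ is at most $M$ for the purpose of the stated constant (equivalently, one uses the sharp range bound), so $\Ew[e^{sY_1}]\le\exp(s^2M^2/8)$.

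Substituting this back gives $P(\tfrac1n\Tsum\xi_i-\Ew[\xi_1]\ge\ve)\le\exp\!\big(-sn\ve+\tfrac18 n s^2M^2\big)$, and minimizing the exponent over $s>0$ at $s=4\ve/M^2$ produces the sub-Gaussian rate $\exp(-2n\ve^2/M^2)$ asserted in \eqref{hoe2}; the lower-tail inequality then follows by the reflection noted at the outset. The only genuinely nontrivial ingredient is Hoeffding's lemma, i.e.\ the uniform bound on the cumulant generating function of a bounded centered variable; everything else (the exponential Markov step, factorization over the independent coordinates, and the one-dimensional minimization in $s$) is routine. I therefore expect the variance-bounding estimate $h''(s)\le R^2/4$, which controls the constant in the exponent, to be the main point requiring care.
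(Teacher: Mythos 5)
Your strategy --- reflection for the lower tail, exponential Markov plus independence, Hoeffding's lemma for the moment generating function, optimization in $s$ --- is the standard route and is essentially Hoeffding's own argument; note that the paper itself gives no proof here, merely citing Hoeffding (1963), Thm.~1, inequality (2.1), and Thm.~2. The genuine gap is exactly the step you yourself flagged: the claim that under $|\xi_i|\le M$ ``the oscillation of $\xi_1$ is at most $M$ for the purpose of the stated constant.'' That is false: $|\xi_1|\le M$ only confines $\xi_1$ to $[-M,M]$, an interval of length $2M$, and the oscillation can genuinely equal $2M$ (take $\xi_1=\pm M$ equiprobably). Applying Hoeffding's lemma honestly gives $\Ew[e^{sY_1}]\le\exp\big(s^2(2M)^2/8\big)=\exp(s^2M^2/2)$, and optimizing $-sn\ve+ns^2M^2/2$ over $s>0$ yields only
\[
P\Big(\Tfrac{1}{n}\Tsum_i\,\xi_i-\Ew[\xi_1]\ge\ve\Big)\le\exp\Big(-\Tfrac{n\ve^2}{2M^2}\Big),
\]
weaker by a factor of $4$ in the exponent than the asserted $\exp(-2n\ve^2/M^2)$. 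No sharpening of the lemma can close this, because the inequality as literally stated is false: for $\xi_i=\pm M$ with probability $1/2$ each and $\ve=M$, the left-hand side of the first inequality in \eqref{hoe2} equals $2^{-n}$ (all $\xi_i$ must equal $M$), while the claimed bound is $e^{-2n}<2^{-n}$ for every $n$.

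What is true --- and what the cited theorems of Hoeffding actually say --- is that the constant $2/M^2$ is correct when $M$ denotes the \emph{length} of the support interval, i.e.\ $a\le\xi_i\le b$ with $b-a\le M$; this is also how the lemma is used in the paper, e.g.\ in \eqref{expneg2}, where the $U_i$ take values in $[0,1]$, so that $M=1$ is a range bound. So the repair lies in the statement rather than in your argument: either read the hypothesis as ``$\xi_i$ takes values in an interval of length $M$'', in which case your proof goes through verbatim (oscillation $\le M$, MGF bound $\exp(s^2M^2/8)$, optimum at $s=4\ve/M^2$) and delivers the stated constant, or keep $|\xi_i|\le M$ and weaken the conclusion to $\exp\big(-n\ve^2/(2M^2)\big)$; the latter version would still serve the paper's asymptotic purposes after enlarging constants such as $k_2$, since only the exponential rate, not its precise coefficient, is ever used.
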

\begin{proof}{}
  \citet[Thm.~2. and Thm.~1, inequality (2.1)]{Hoef:63}.
\hfill\qed\end{proof}
\subsection{A uniform Edgeworth expansion}
%
%
In the following theorem, generalizes \citet[Thm.~1]{Ibr:67} and \citet[Thm.~3.3.1]{Ib:Li:71} to the situation where the
law of $\xi_i$ depends through an additional parameter $t$:
\begin{Thm}\label{berry2}
  For some set $\Theta \subset \R $ and fixed $t\in \Theta$ let $\xi_{i,t}$, $i=1,2,\ldots$ be a sequence of i.i.d.\ real-valued
  random variables with distribution $F_t$ and with
  \begin{equation}
\Ew \xi_{i,t}=0,\quad  \Ew \xi_{i,t}^2=1, \quad  \Ew \xi_{i,t}^3=\rho_t, \quad  \Ew \xi_{i,t}^4-3=\kappa_t
  \end{equation}
Let $\Phi(s)$ and $\varphi(s)$ be the c.d.f.\ and p.d.f.\ of ${\cal N}(0,1)$ and
\begin{eqnarray}
  F_n(s,t)&:=&P(\Tsum_{i=1}^n \xi_{i,t} < s\,\sqrt{n}),\qquad 
  H_n(s,t):=\Phi(s)-\frac{\varphi(s)}{\sqrt{n}\,}\varphi(s)\frac{\rho_t}{6}(s^2-1)\\
  G_n(s,t)&:=&H_n(s,t)-\frac{\varphi(s)}{n}\Big[\frac{\kappa_t}{24}(s^3-3s)
  +\frac{\rho_t^2}{72}(s^5-10s^3+15s)\Big]\label{Gnstdef}
\end{eqnarray}
Let $f_t$ be the characteristic function  of $F_t$.
\begin{ABC}
\item If $\sup_{t}\kappa_t<\infty$ and if
 there is some $u_0>0$ such that for all $u_1$ the ``no-lattice''-condition
 (C)'
\begin{equation}
\hat f_{u_0}(u_1):=\sup_{u_0<u<u_1} \sup_{t}|f_t(u)|<1  \label{Cddd2}
\end{equation}
is fulfilled, then
\begin{equation}
\sup_{s\in\R} \sup_{t} |F_n(s,t)-H_n(s,t)|=\Lo(n^{-1/2})
\end{equation}
\item If
$
\sup_{t}\Ew |\xi_{i,t}|^5<\infty
$ and 
the uniform Cram\'er--condition (C)
\begin{equation}
\limsup_{u\to\infty} \sup_{t}|f_t(u)|<1  \label{Cddd}
\end{equation}
is fulfilled, then  
\begin{equation}
\sup_{s\in\R} \sup_{t} |F_n(s,t)-G_n(s,t)|=\LO(n^{-3/2})
\end{equation}
\end{ABC}
\end{Thm}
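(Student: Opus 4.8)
The plan is to follow the classical Fourier-inversion (Esseen smoothing) proof of the Edgeworth expansion, as in \citet[Thm.~3.3.1]{Ib:Li:71} and \citet[Thm.~1]{Ibr:67}, but to carry the auxiliary parameter $t$ through every estimate and to replace each bound that is merely pointwise in $t$ by its uniform-in-$t$ counterpart. For fixed $n$ write $\hat F_n(u,t):=f_t(u/\sqrt{n}\,)^n$ for the characteristic function of $\sum_i\xi_{i,t}/\sqrt{n}$, and let $\hat H_n(u,t)$, $\hat G_n(u,t)$ be the Fourier--Stieltjes transforms of $s\mapsto H_n(s,t)$ and $s\mapsto G_n(s,t)$, which are the familiar truncations
\[
\hat H_n(u,t)=e^{-u^2/2}\Big(1+\tfrac{\rho_t}{6\sqrt{n}}(iu)^3\Big),\qquad \hat G_n(u,t)=e^{-u^2/2}\Big(1+\tfrac{\rho_t}{6\sqrt{n}}(iu)^3+\tfrac{1}{n}\big[\tfrac{\kappa_t}{24}(iu)^4+\tfrac{\rho_t^2}{72}(iu)^6\big]\Big).
\]
Esseen's smoothing inequality, applied for each $t\in\Theta$ with truncation level $T_n$, bounds $\sup_s|F_n(s,t)-G_n(s,t)|$ by $\tfrac{1}{\pi}\int_{|u|\le T_n}|\hat F_n(u,t)-\hat G_n(u,t)|\,|u|^{-1}\,du$ plus $C\,T_n^{-1}\sup_{s,t}|\partial_s G_n(s,t)|$, and analogously for $H_n$. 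The derivative term is uniformly bounded in $t$ and $n$ because $\sup_t|\rho_t|$ and $\sup_t|\kappa_t|$ are finite under the hypotheses, so it contributes $\LO(T_n^{-1})$; choosing $T_n\asymp n^{3/2}$ in case~(b) and $T_n=U_n\sqrt{n}$ with $U_n\to\infty$ slowly in case~(a) (see below) gives the claimed orders for this piece, and it then remains to estimate the integral uniformly in $t$.

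After the substitution $v=u/\sqrt{n}$ the integral reads $\int_{|v|\le T_n/\sqrt{n}}|f_t(v)^n-\hat G_n(\sqrt{n}\,v,t)|\,|v|^{-1}\,dv$. On a fixed small disc $|v|\le\delta$ I would Taylor-expand $\log f_t(v)=-\tfrac{v^2}{2}+\tfrac{\rho_t}{6}(iv)^3+\tfrac{\kappa_t}{24}(iv)^4+\LO(|v|^5)$, where the decisive point is that the remainder is \emph{uniform} in $t$, precisely because $\sup_t\Ew|\xi_{i,t}|^5<\infty$ (resp.\ $\sup_t\kappa_t<\infty$ for the shorter one-term expansion in (a)). Exponentiating, collecting powers of $n^{-1/2}$, and comparing with $\hat G_n$ (resp.\ $\hat H_n$) reproduces the standard cancellation and leaves an integrand of order $e^{-nv^2/4}\cdot n\,(|v|^{m}+\dots)$ with $t$-independent constants; integrating over $|v|\le\delta$ yields $\LO(n^{-3/2})$ in (b) and $\LO(n^{-1})=\Lo(n^{-1/2})$ in (a). Since every constant here is controlled by the uniform moment bounds, the estimate is uniform over $t\in\Theta$ and over $s$.

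On the remaining band $\delta\le|v|\le T_n/\sqrt{n}$ the Gaussian factor in $\hat G_n$ is negligible and the contribution is governed by $|f_t(v)|^n$. For $\delta\le|v|\le\delta_0$, with $\delta_0$ depending only on $\sup_t\Ew|\xi_{i,t}|^3$, the normalization $\Ew\xi_{i,t}=0$, $\Ew\xi_{i,t}^2=1$ gives the crude uniform bound $|f_t(v)|\le 1-\tfrac{v^2}{4}\le e^{-v^2/4}$, while for $|v|\ge\delta_0$ the uniform Cram\'er condition~(C) supplies a constant $q<1$ with $\sup_t|f_t(v)|\le q$; hence $\sup_t|f_t(v)|^n$ decays geometrically and $\int_{\delta\le|v|\le cn}|f_t(v)|^n|v|^{-1}\,dv=\LO(q^n\log n)=o(n^{-3/2})$, settling case~(b). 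In case~(a) only the weaker no-lattice condition~(C') is available, under which $\hat f_{u_0}(u_1)<1$ for each finite $u_1$ but possibly $\hat f_{u_0}(u_1)\uparrow 1$; here I would truncate at $T_n=U_n\sqrt{n}$ with $U_n\to\infty$ chosen so slowly that $\hat f_{u_0}(U_n)\le 1-1/\log n$, whence $\hat f_{u_0}(U_n)^n\le e^{-n/\log n}$ beats every power while still $T_n^{-1}=o(n^{-1/2})$, so both the Esseen boundary term and the medium-frequency integral are $o(n^{-1/2})$.

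The routine small-frequency Edgeworth bookkeeping transfers verbatim once the moment remainders are made uniform, so the genuine difficulty lies in the medium/large-frequency range: the entire theorem is engineered so that conditions~(C) and~(C') deliver a bound on $|f_t(v)|$ that is uniform in $t$, and the main obstacle is to convert these hypotheses into uniform geometric decay of $|f_t(v)|^n$ across the whole band $\delta\le|v|\le T_n/\sqrt{n}$. In case~(b) this is immediate from the $\limsup$ form of~(C) combined with the near-origin moment bound; in case~(a) it forces the delicate coupling between the growth of $U_n\to\infty$ and the rate at which $\hat f_{u_0}(U_n)$ may tend to $1$, which is exactly where the weaker assumption~(C') is just strong enough to produce $\Lo(n^{-1/2})$ but not the sharper $\LO(n^{-3/2})$.
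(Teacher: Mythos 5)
Your proposal follows essentially the same route as the paper's proof: both apply the Berry/Esseen smoothing inequality with the parameter $t$ carried uniformly through the classical Ibragimov--Linnik argument, bound the low-frequency integral by uniform moment-controlled Edgeworth estimates, dispose of the high-frequency band via condition (C) with $T\asymp n^{3/2}$ in case (b), and in case (a) couple a slowly growing truncation point to the rate at which $\hat f_{u_0}$ may approach $1$ (the paper thresholds at $1-1/\sqrt{n}$ and takes $T=\sqrt{n}\,\min(l_n',\sqrt{n})$, you threshold at $1-1/\log n$ --- an immaterial difference). The only cosmetic divergence is that the paper isolates the tail of the Edgeworth transform as a separate summand handled by Mill's ratio, which you subsume in the remark that the Gaussian factor is negligible on the medium/large-frequency band.
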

\begin{proof}{}
The general technique to prove Edgeworth expansions is to use Berry's smoothing lemma, which we take from \citet[Thm.~1.5.2]{Ib:Li:71}
and apply it to our case: Let $f_{n,t}$ be the characteristic function of $F_{n}(\,\cdot\,,t)$, and
define the Edgeworth measures $G_{n,j,t}$, $j=1,2$ as
$
G_{n,1,t}(s)=H_n(s,t)$, $G_{n,2,t}(s)=G_n(s,t)
$
as well as their Fourier-Stieltjes transforms
$g_{n,j,t}(u)=\int e^{isu} G_{n,j,t}'(s)\,\lambda(ds)$
and
$
\hat G_{n,j}'=\sup_t \sup_{s\in\R} |G_{n,j,t}'(s)|
$. 
Then for $T>T'>0$, it holds that
\begin{eqnarray}
\sup_{s\in\R} \sup_t |F_{n}(s,t)-G_{n,j,t}(s)|&\leq&
\sup_t \frac{1}{\pi} \int_{-T'}^{T'} \frac{|f_{n,t}(u)-g_{n,j,t}(u)|}{|u|}\,\lambda(du)+
\sup_t \frac{1}{\pi} \int_{T'\leq |u|<T} \frac{|f_{n,t}(u)|}{|u|}\,\lambda(du)+\nonumber\\
&&\quad+
\sup_t \frac{1}{\pi} \int_{T'\leq |u|<T} \frac{|g_{n,j,t}(u)|}{|u|}\,\lambda(du)+
\sup_t \frac{24}{\pi T}\hat G_{n,j}' \label{Berrybound}
\end{eqnarray}
But similarly as in \citet[pp.~462]{Ibr:67}, for some constants $\gamma>0$  and $c_j>0$, we get for $T'=\gamma \sqrt{n}$
and $|u|\leq T'$
\begin{equation}
\frac{|f_{n,t}(u)-g_{n,j,t}(u)|}{|u|} \leq c_j \sup_t \Ew[|\xi_{1,t}|^{3+j}]\,  n^{-(j+1)/2}\,(|u|^{j}+|u|^{2+3j})\,e^{-u^2/4}
\end{equation}
and hence the first summand in the RHS of \eqref{Berrybound} is $\LO(n^{-(j+1)/2})$.
For the second summand, we note that $f_{n,t}(u)=f_{t}^n(u/\sqrt{n})$ and hence
\begin{equation}
\int_{T'}^{T} \frac{|f_{n,t}(u)|}{u}\,\lambda(du)=\int_{\gamma}^{T/\sqrt{n}} \frac{|f_t^n(u)|}{u}\,\lambda(du)
\end{equation}
In case $j=2$, for $\gamma$ sufficiently large, by condition~(C), $\sup_t \sup_{|u|>\gamma} |f_t(u)|=:\beta<1$ and hence,
for $T=n^{3/2}$,
\begin{equation}
\sup_t \int_{T'}^{T} \frac{|f_{n,t}(u)|}{u}\,\lambda(du)\leq \log(T/\sqrt{n\,}\,) \beta^n=\Lo(e^{-\sqrt{n}/2})
\end{equation}
In case $j=1$, we proceed as in \citet[Lemma~3.3.1]{Ib:Li:71}: If $\sup_{u_1} \hat f_{\gamma}(u_1) < 1$ for $\gamma$ sufficiently large, we may proceed as in case $j=2$;
else,  (C') says that for  $\gamma$ sufficiently large, $\hat f_{\gamma}(u_1)$ is isotone in $u_1$ and tends to $1$. So we may define
\begin{equation}
l'_n:=\inf\{u_1\,\big|\,\hat f_{\gamma}(u_1)\ge 1-1/\sqrt{n\,}\,\}
\end{equation}
Setting $T=\sqrt{n\,}\,l_n$ for $l_n=\min(l'_n,\sqrt{n})$, we see that $l_n^{-1}=\Lo(n^{0})$ and
\begin{equation}
\sup_t \int_{T'}^{T} \frac{|f_{n,t}(u)|}{u}\,\lambda(du)\leq \log(\sqrt{n}) (1-1/\sqrt{n})^n\leq
\log(\sqrt{n}) e^{-\sqrt{n}}=\Lo(e^{-\sqrt{n}/2})
\end{equation}
Hence the second summand in the the RHS of \eqref{Berrybound} is $\LO(n^{-(j+1)/2})$.
Also, it is easy to see that $\hat G_{n,j}'<\infty$, and hence by the choice of $T$, the last summand in the the RHS of \eqref{Berrybound}
is $\LO(l_n^{-1}n^{-1/2})=\Lo(n^{-1/2})$ in case $j=1$, and $\LO(n^{-3/2})$ for $j=2$.
Finally, by  Mill's ratio, the third summand is again easily shown to be $\LO(\exp(-\gamma^2 n/3))$.
\hfill\qed\end{proof}
\subsection{Moments for the Binomial}
\begin{Lem}\label{binlemmom}
  Let $X\sim {\rm Bin}(n,p)$. Then
  \begin{align}
    &\Ew[X]=pn,\qquad\Ew[X^2]=p^2n^2+pn-p^2n,\\
    &\Ew[X^3]=p^3n^3-3p^3n^2+2p^3n+3p^2n^2-3p^2n+pn,\\
    &\Ew[X^4]=p^4n^4-6p^4n^3+11p^4n^2-6p^4n+
    6p^3n^3-18p^3n^2+12p^3n+7p^2n^2-7p^2n+pn
  \end{align}
and consequentially, for $p=r/\sqrt{n}$,
  \begin{align}
    &\Ew[X]=rn^{1/2},\qquad\Ew[X^2]=r^2n+rn^{1/2}-r^2,\\
    &\Ew[X^3]=r^3n^{3/2}+3r^2n+(r-3r^3)n^{1/2}-3r^2+2r^3n^{-1/2},\\
    &\Ew[X^4]=r^4n^2+6r^3n^{3/2}+(7r^2-6r^4)n+(r-18r^3)n^{1/2}+
    11r^4-7r^2+12r^3n^{-1/2}-6r^4n^{-1}
  \end{align}
\end{Lem}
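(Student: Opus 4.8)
The plan is to obtain the raw moments from the \emph{factorial} moments of the Binomial, which are elementary, and then to convert between the two. Writing $X=\Tsum_{i=1}^n B_i$ with $B_i\iid{\rm Bin}(1,p)$, the key observation is that the $k$-th falling factorial moment has the clean closed form
$$
\Ew[X(X-1)\cdots(X-k+1)]=n(n-1)\cdots(n-k+1)\,p^k,
$$
valid in particular for $k=1,2,3,4$. This is seen by a counting argument: with $(X)_k:=X(X-1)\cdots(X-k+1)$ denoting the number of ordered $k$-tuples of distinct elements drawn from the set of ``successful'' indices $\{i:B_i=1\}$, one has $(X)_k=\sum B_{i_1}\cdots B_{i_k}$, the sum running over ordered $k$-tuples of pairwise distinct indices; by independence each such tuple contributes $p^k$ in expectation, and there are exactly $n(n-1)\cdots(n-k+1)$ of them.

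First I would record these four factorial moments explicitly. Next, I would express the monomials $X^k$ in the basis of falling factorials via the Stirling numbers of the second kind $S(k,j)$, i.e.\ $X^k=\sum_{j=1}^k S(k,j)\,(X)_j$. For $k\le 4$ the required coefficients are $S(2,1)=S(2,2)=1$; $S(3,1)=1$, $S(3,2)=3$, $S(3,3)=1$; and $S(4,1)=1$, $S(4,2)=7$, $S(4,3)=6$, $S(4,4)=1$. Taking expectations termwise and inserting the factorial moments gives
$$
\Ew[X^2]=np+n(n-1)p^2,
$$
together with the analogous three- and four-term expressions for $\Ew[X^3]$ and $\Ew[X^4]$; expanding the falling factorials $n(n-1)$, $n(n-1)(n-2)$, and $n(n-1)(n-2)(n-3)$ into powers of $n$ then reproduces the first block of formulas verbatim.

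Finally, for the specialization I would substitute $p=r/\sqrt{n}$ into each of the four expanded moments and regroup by the resulting half-integer powers of $n$: since every factor $p^m$ contributes $r^m n^{-m/2}$, a monomial $p^m n^\ell$ turns into $r^m n^{\ell-m/2}$, and collecting like powers yields the second block. There is no genuine obstacle here, as the whole argument is routine bookkeeping; the only step that demands care is the fourth moment, where one must carry the Stirling coefficient $S(4,2)=7$ correctly and retain the \emph{negative} powers of $n$ (the terms $12r^3n^{-1/2}$ and $-6r^4n^{-1}$) that arise from the lower-order pieces of the falling factorials once $p=r/\sqrt{n}$ is inserted.
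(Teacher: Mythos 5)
Your proof is correct and complete: the factorial-moment formula $\Ew[X(X-1)\cdots(X-k+1)]=n(n-1)\cdots(n-k+1)\,p^k$, the Stirling coefficients you quote for $k\le 4$ (in particular $S(4,2)=7$ and $S(4,3)=6$, the two places where an error would typically creep in), and the final substitution $p=r/\sqrt{n}$ all check out and reproduce both blocks of formulas exactly, including the negative powers $12r^3n^{-1/2}$ and $-6r^4n^{-1}$. The paper, by contrast, offers no mathematical argument at all: its entire proof is the remark ``easy calculations for \texttt{MAPLE} --- see procedure \texttt{Binmoment}'', i.e.\ the moments are obtained by the same symbolic-algebra pipeline that drives the paper's main Taylor expansions, and this lemma is simply folded into that infrastructure. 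So your route is genuinely different, and arguably preferable as mathematics: it is self-contained, human-verifiable, and explains structurally why $\Ew[X^k]$ is a polynomial in $n$ and $p$ whose lower-order terms arise from expanding the falling factorials $n(n-1)\cdots(n-j+1)$. What the paper's approach buys is only automation and consistency with the rest of its computations; what yours buys is an actual proof that can be checked without software.
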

\begin{proof}{}
  easy calculations for {\tt MAPLE} --- see procedure {\tt Binmoment}\ldots
\hfill\qed\end{proof}
\subsection{Decay of the standard normal}
Finally, we note  the following Lemma for ${\cal N}(0,1)$ variables
\begin{Lem} \label{lemnormlog}
  Let $X\sim {\cal N}(0,1)$. Then for $0\leq k\leq 8$ and any sequence $(c_n)_n\subset\R$ with $\liminf_n c_n>\sqrt{2}$,
  \begin{equation}
    \Ew [X^k\Jc_{\{X\geq c_n\sqrt{\log(n)}\}}] =\Lo(n^{-1})
  \end{equation}
\end{Lem}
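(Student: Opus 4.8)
The plan is to express the truncated moment as a Gaussian tail integral, reduce the fluctuating bound $c_n$ to a single constant, and finish with a Mill's-ratio estimate. First I would set $a_n:=c_n\sqrt{\log n}$ and write $\Ew[X^k\Jc_{\{X\ge a_n\}}]=\int_{a_n}^\infty x^k\varphi(x)\,dx=:I_k(a_n)$, the integrand being nonnegative on $[a_n,\infty)$ since $a_n>0$ for $n$ large. From $\liminf_n c_n>\sqrt2$ I extract a fixed constant $c$ with $\sqrt2<c$ and $c_n\ge c$ for all large $n$. Because $I_k(\cdot)$ is the tail integral of a nonnegative function, it is monotone decreasing in its lower limit, so eventually $I_k(a_n)\le I_k(c\sqrt{\log n})$; it therefore suffices to control the deterministic quantity $I_k(c\sqrt{\log n})$.

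Next I would establish the elementary bound $I_k(a)\le C\,a^{k-1}\varphi(a)$, uniformly over $0\le k\le 8$, for all $a$ bounded away from $0$. Using $x\varphi(x)=-\varphi'(x)$ and integrating by parts yields the recursion
\begin{equation}
I_k(a)=a^{k-1}\varphi(a)+(k-1)\,I_{k-2}(a),
\end{equation}
with base cases $I_1(a)=\varphi(a)$ and $I_0(a)=\bar\Phi(a)\le\varphi(a)/a$ (Mill's ratio). A short induction, bounding the even base case by Mill's ratio, then gives $I_k(a)\le p_k(a)\,\varphi(a)$ with $p_k$ a polynomial of degree $k-1$; since the range $0\le k\le 8$ is finite the constant $C$ may be taken uniform.

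Finally I would substitute $a=c\sqrt{\log n}$, using $\varphi(a)=(2\pi)^{-1/2}n^{-c^2/2}$ and $a^{k-1}=c^{k-1}(\log n)^{(k-1)/2}$, to obtain
\begin{equation}
I_k(c\sqrt{\log n})\le C\,(\log n)^{(k-1)/2}\,n^{-c^2/2}.
\end{equation}
Writing $c^2/2=1+\beta$ with $\beta>0$ (possible as $c>\sqrt2$), the right-hand side equals $C\,n^{-1}(\log n)^{(k-1)/2}n^{-\beta}=\Lo(n^{-1})$, the logarithmic factor being absorbed by $n^{-\beta}$. The computation is throughout routine; the only step that genuinely uses the hypothesis is the reduction in the first paragraph, where the $\liminf$ condition---under which $c_n$ may oscillate or diverge---is converted into a usable fixed exponent $c^2/2>1$ via monotonicity of the tail integral in its lower limit. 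I expect no serious obstacle beyond keeping that reduction clean.
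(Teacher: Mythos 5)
Your proof is correct. Note, however, that on this point the paper contains no internal argument to compare against: its entire proof of Lemma~\ref{lemnormlog} is the citation \citet[Lem.~A.6]{Ruck:03b}, deferring to the companion paper on the median. Your write-up supplies the missing elementary derivation: you reduce the oscillating bound $c_n$ to a fixed $c\in(\sqrt{2},\,\liminf_n c_n)$ using monotonicity of the tail integral $I_k$ in its lower limit; you establish $I_k(a)\le C\,a^{k-1}\varphi(a)$ via the integration-by-parts recursion $I_k(a)=a^{k-1}\varphi(a)+(k-1)I_{k-2}(a)$ with Mill's ratio handling the even base case; and you substitute $a=c\sqrt{\log n}$ so that $\varphi(a)=(2\pi)^{-1/2}n^{-c^2/2}$, where $c^2/2=1+\beta$ with $\beta>0$ and $n^{-\beta}$ absorbs the factor $(\log n)^{(k-1)/2}$. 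This is the standard route for such a Gaussian tail estimate, and it has the advantage of making explicit exactly where the hypothesis $\liminf_n c_n>\sqrt{2}$ enters (securing an exponent strictly larger than $1$), which the bare citation hides; the citation, in turn, buys brevity and consistency with the companion paper where the same lemma is needed. Two cosmetic points, neither a gap: for $k=0$ your ``polynomial'' $p_0$ is really $1/a$, which is harmless since you only use the bound for $a$ bounded away from $0$ (indeed $a\to\infty$); and the reduction $c_n\ge c$ holds only eventually in $n$, which suffices because the assertion is asymptotic and each truncated moment is trivially finite.
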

\begin{proof}{}\citet[Lem.~A.6]{Ruck:03b}.
\hfill\qed\end{proof}
%
\section*{Acknowledgement}
\medskip

\medskip
\hrulefill\hspace*{6cm}\\[2ex]
Web-page to this article:\\
\href{http://www.mathematik.uni-kl.de/~ruckdesc/}%
{{\footnotesize \url{http://www.mathematik.uni-kl.de/~ruckdesc/}}}

%
%
%
%
%
\end{document}